\documentclass{amsart}
\usepackage{amsmath, amsthm, amssymb, amsfonts}
\usepackage[normalem]{ulem}
\usepackage{hyperref}
\usepackage[all]{xy}
\usepackage{combelow}

\usepackage{verbatim}
\usepackage{caption}
\setlength{\belowcaptionskip}{\baselineskip}

\usepackage{mathtools}
\mathtoolsset{showonlyrefs}

\usepackage{tikz-cd}

\theoremstyle{plain}
\newtheorem{thm}{Theorem}[section]

\newtheorem{lem}[thm]{Lemma}
\newtheorem{lemma}[thm]{Lemma}
\newtheorem{prop}[thm]{Proposition}

\newtheorem{claim}[thm]{Claim}

\theoremstyle{definition}
\newtheorem{defn}[thm]{Definition}

\theoremstyle{remark}
\newtheorem{rmk}[thm]{Remark}

\newcommand{\BN}{{\mathbb{N}}}

\newcommand{\BQ}{{\mathbb{Q}}}

\newcommand{\BZ}{{\mathbb{Z}}}

\newcommand{\CO}{{\mathcal O}}

\newcommand{\CZ}{{\mathcal Z}}

\newcommand{\Fg}{{\mathfrak{g}}}
\newcommand{\Fh}{{\mathfrak{h}}}

\newcommand{\Fq}{{\mathfrak{q}}}

\newcommand{\Ft}{{\mathfrak{t}}}

\newcommand{\Fz}{{\mathfrak{z}}}

\newcommand{\fg}{{\mathfrak{g}}}
\newcommand{\fq}{{\mathfrak{q}}}
\newcommand{\fJ}{{\mathfrak{J}}}
\newcommand{\fG}{{\mathfrak{G}}}
\newcommand{\fL}{{\mathfrak{L}}}

\newcommand{\ch}{{\mathrm{ch}}}

\DeclareFontFamily{OT1}{rsfs}{}
\DeclareFontShape{OT1}{rsfs}{n}{it}{<-> rsfs10}{}
\DeclareMathAlphabet{\curly}{OT1}{rsfs}{n}{it}

\newcommand\Hom{\operatorname{Hom}}

\newcommand\Id{\operatorname{Id}}

\newcommand\Aut{\operatorname{Aut}}

\def\mult{{\text{mult}}}
\def\univ{{\text{univ}}}

\def\Tan{{\text{Tan}}}

\def\Hilb{{\textup{Hilb}}}
\def\eHilb{{\emph{Hilb}}}

\def\ch{{\text{ch}}}

\def\th{{\widetilde{h}}}
\def\gNS{{\Fg_{\mathrm{NS}}}}

\def\act{{\textup{act}}}

\def\secskip{\vspace{.5\linespacing plus.7\linespacing}}

\begin{document}
\title[Motivic decompositions for $\text{Hilb}_n(\text{K3})$]{Motivic decompositions for the Hilbert scheme of points of a K3 surface}
\date{\today}

\author{Andrei Negu\cb{t}}
\address{MIT, Department of Mathematics, Cambridge, MA, USA}
\address{Simion Stoilow Institute of Mathematics, Bucharest, Romania}
\email{andrei.negut@gmail.com}

\author{Georg Oberdieck}
\address{University of Bonn, Institut f\"ur Mathematik, Bonn, Germany}
\email{georgo@math.uni-bonn.de}

\author{Qizheng Yin}
\address{Peking University, BICMR, Beijing, China}
\email{qizheng@math.pku.edu.cn}

\begin{abstract}
We construct an explicit, multiplicative Chow--K\"unneth decomposition for the Hilbert scheme of points of a K3 surface. We further refine this decomposition with respect to the action of the Looijenga--Lunts--Verbitsky Lie algebra.
\end{abstract}

\maketitle

\section{Introduction}
In the present paper, we study the motivic aspects of the Looijenga--Lunts--Verbitsky (\cite{LL, V}, LLV for short) Lie algebra action on the Chow ring of the Hilbert scheme of points of a K3 surface. Using a special element of the LLV algebra and formulas of \cite{MN} by Maulik and the first author, we construct an explicit Chow--K\"unneth decomposition for the Hilbert scheme, prove its multiplicativity, and show that all divisor classes and Chern classes lie in the correct component of the decomposition. This confirms expectations of Beauville~\cite{Be} and Voisin \cite{Voi}. We also obtain a refined motivic decomposition for the Hilbert scheme by taking into account the LLV algebra action, and prove its multiplicativity.

Both results parallel the case of an abelian variety, which we shall briefly review.

\subsection{Abelian varieties}
Let $X$ be an abelian variety of dimension $g$. Recall the classical result of Deninger--Murre on the decomposition of the Chow motive $\Fh(X)$.
\begin{thm}[\cite{DM}]
There is a unique, multiplicative Chow--K\"unneth decomposition:
\begin{equation} \label{dmdec}
\mathfrak{h}(X) = \bigoplus_{i = 0}^{2g} \Fh^i(X)
\end{equation}
such that for all $N \in \BZ$, the multiplication $[N]: X \to X$ acts on $\Fh^i(X)$ by $[N]^*\! = N^i$.
\end{thm}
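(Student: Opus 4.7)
The plan is to construct the projectors $\pi_i \in \Chow^g(X\times X)$ via Beauville's Fourier-theoretic eigenspace decomposition of Chow groups, then verify the Chow--K\"unneth, uniqueness, and multiplicativity properties. Using the Fourier transform $\CF: \Chow^*(X)_\BQ \to \Chow^*(\hat X)_\BQ$ defined via the Poincar\'e bundle, together with the intertwining identity $\CF\circ[N]^* = N^g[N]_*\circ\CF$ (up to sign), one shows that the operators $\{[N]^*\}_{N\in\BZ}$ commute and admit a simultaneous eigenspace decomposition
$$\Chow^p(X)_\BQ = \bigoplus_{i=0}^{2g} \Chow^p(X)_i, \qquad \Chow^p(X)_i = \{\alpha : [N]^*\alpha = N^i\alpha\ \text{ for all }N\}.$$
The eigenvalues are powers of $N$ because on the Fourier-dual side $[N]_*$ acts diagonally by degree on each graded piece.

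Applying the same decomposition to $\Chow^g(X\times X)$ under the automorphism $([N]\times\mathrm{id})^*$, the diagonal splits canonically as $\Delta_X = \sum_{i=0}^{2g}\pi_i$ with $([N]\times\mathrm{id})^*\pi_i = N^i\pi_i$. I would then verify: (i)~orthogonality $\pi_i\circ\pi_j = \delta_{ij}\pi_i$, by decomposing both sides of $\Delta_X\circ\Delta_X = \Delta_X$ into simultaneous eigenspaces of $([N]\times\mathrm{id}\times\mathrm{id})^*$ and $(\mathrm{id}\times\mathrm{id}\times[N])^*$ on $\Chow(X^3)$; (ii)~the cohomology class of $\pi_i$ is the K\"unneth projector onto $H^i(X)$, since $[N]^*$ acts as $N^i$ on $H^i(X)$ and $\sum_i [\pi_i]$ is the class of the diagonal; (iii)~the action of $[N]^*$ on $\Fh^i(X)$ equals $N^i$ by construction. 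Uniqueness is then immediate: any alternative set of projectors $\pi_i'$ satisfying the eigenvalue property lies in the same eigenspaces of $([N]\times\mathrm{id})^*$, and summing to $\Delta_X$ forces $\pi_i' = \pi_i$.

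The main obstacle is multiplicativity: the small diagonal $\delta_X \in \Chow^{2g}(X^3)$, viewed as the correspondence realizing the product $\Fh(X)\otimes\Fh(X)\to\Fh(X)$, must satisfy $\pi_k\circ\delta_X\circ(\pi_i\otimes\pi_j) = 0$ whenever $k\ne i+j$. I would attack this by running the same eigenspace analysis on $\Chow^{2g}(X^3)$ with the three commuting automorphisms $[N]\times\mathrm{id}\times\mathrm{id}$, $\mathrm{id}\times[N]\times\mathrm{id}$, and $\mathrm{id}\times\mathrm{id}\times[N]$. Because $\delta_X$ is the image of $X$ under $\Delta_3: x\mapsto(x,x,x)$, which satisfies $\Delta_3\circ[N] = ([N]\times[N]\times[N])\circ\Delta_3$, pulling $\delta_X$ back by these automorphisms shows that $\delta_X$ lies in the sum of simultaneous eigenspaces indexed by triples $(N^a, N^b, N^c)$ with $a+b=c$ -- precisely the constraint that, composed with $\pi_i$, $\pi_j$, $\pi_k$, forces the desired vanishing unless $k = i+j$. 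Carrying out this eigenvalue bookkeeping cleanly at the level of cycles is the most delicate part of the proof.
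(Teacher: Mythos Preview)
The paper does not prove this theorem; it is quoted from Deninger--Murre \cite{DM} as background for the analogous results on Hilbert schemes. There is therefore no proof in the paper to compare against. The only argument the paper supplies is the one-line remark immediately following the statement, that multiplicativity ``can be seen by simply comparing the actions of $[N]^*$'' --- and this is exactly the mechanism you invoke.

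Your outline is essentially the Deninger--Murre argument and is sound. One point deserves tightening: your eigenspace bookkeeping on $\delta_X$ is more circuitous than necessary. The cleanest route to multiplicativity at the level of correspondences is to use that $[N]^*$ is a ring homomorphism, which translates into the identity
\[
{}^t\Gamma_{[N]} \circ \delta_X \;=\; \delta_X \circ \bigl({}^t\Gamma_{[N]} \times {}^t\Gamma_{[N]}\bigr)
\]
of correspondences $X\times X \to X$. Composing with $\pi_k$ on the left and $\pi_i\times\pi_j$ on the right, and using that the $\pi_i$ commute with ${}^t\Gamma_{[N]}$ (with eigenvalue $N^i$) as \emph{correspondences} --- a point you should make explicit, since it is stronger than the operator-level statement --- yields $(N^k-N^{i+j})\,\pi_k\circ\delta_X\circ(\pi_i\times\pi_j)=0$ for all $N$, hence the vanishing unless $k=i+j$. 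Your approach of decomposing $\delta_X$ directly into tri-graded eigenspaces on $X^3$ arrives at the same place, but note that $([N]^3)^*\delta_X$ is \emph{not} a multiple of $\delta_X$ (the preimage of the small diagonal under $[N]^3$ is a union of torsion-translates of it), so the constraint $a+b=c$ on the eigenspace components has to be extracted from the ring-homomorphism identity above rather than from naive equivariance of $\delta_X$.
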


The decomposition \eqref{dmdec} specializes to the K\"unneth decomposition in cohomology (hence the name Chow--K\"unneth), and to the Beauville decomposition \cite{Be0} in Chow. The latter takes the form:
\begin{equation} \label{bdec}
A^*(X) = \bigoplus_{i, s} A^i(X)_s
\end{equation}
with:
\[A^i(X)_s = A^i(\Fh^{2i - s}(X)) = \{\alpha \in A^i(X) \,|\, [N]^*\alpha = N^{2i - s} \alpha \text{ for all } N \in \BZ\}.\]
The \emph{multiplicativity} of \eqref{dmdec} stands for the fact that the cup product:
\[\cup: \Fh(X) \otimes \Fh(X) \to \Fh(X)\]
respects the grading, in the sense that: 
\[\cup: \Fh^i(X) \otimes \Fh^j(X) \to \Fh^{i + j}(X)\]
for all $i, j \in \{0, ..., 2g\}$. This can be seen by simply comparing the actions of $[N]^*$. As a result, the bigrading in~\eqref{bdec} is multiplicative, \emph{i.e.}, compatible with the ring structure of $A^*(X)$.

The Beauville decomposition is expected to provide a multiplicative \emph{splitting} of the conjectural Bloch--Beilinson filtration on $A^*(X)$. A difficult conjecture of Beauville (and consequence of the Bloch--Beilinson conjecture) predicts the vanishing $A^*(X)_s = 0$ for $s < 0$ and the injectivity of the cycle class map:
\[\mathrm{cl}: A^*(X)_0 \to H^*(X).\]

Further, any symmetric ample class $\alpha \in A^1(X)_0$ induces an $\mathfrak{sl}_2$-triple $(e_\alpha, f_\alpha, h)$ acting on $A^*(X)$. A Lefschetz decomposition of $\Fh(X)$ with respect to the $\mathfrak{sl}_2$-action was obtained by K\"unnemann \cite{Ku}, refining \eqref{dmdec}. More generally, Moonen~\cite{Mon} constructed an action of the N\'eron--Severi part of the Looijenga--Lunts \cite{LL} Lie algebra~$\Fg_{\mathrm{NS}}$ on $A^*(X)$, which contains all possible $\mathfrak{sl}_2$-triples above (he actually considered the slightly larger Lie algebra $\mathfrak{sp}(X \times X^\vee)$; see \cite[Section 6]{Mon}). He then obtained a refined motivic decomposition with respect to the $\Fg_{\mathrm{NS}}$-action.

\begin{thm}[\cite{Mon}]
There is a unique decomposition:
\begin{equation} \label{modec}
\Fh(X) = \bigoplus_{\psi \in \mathrm{Irrep}(\Fg_{\mathrm{NS}})}\Fh_\psi(X)
\end{equation}
where $\psi$ runs through all isomorphism classes of finite-dimensional irreducible representations of $\Fg_{\mathrm{NS}}$, and $\Fh_\psi(X)$ is $\psi$-isotypic under $\gNS$.
\end{thm}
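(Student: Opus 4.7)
The plan is to lift Moonen's $\gNS$-action on $A^*(X)$ to an action on the Chow motive $\Fh(X)$ by correspondences, and then carve out the summands $\Fh_\psi(X)$ using the isotypic projectors from finite-dimensional representation theory of semisimple Lie algebras.

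First, I would realize every generator of $\gNS$ as a cycle in $\Chow^g(X \times X) = \End(\Fh(X))$. The Lefschetz operator $e_\alpha$ for symmetric $\alpha \in A^1(X)_0$ is tautologically the correspondence $(\id \times \id)_*\alpha$; the dual operator $f_\alpha$ is constructed in \cite{Ku} using the Fourier--Mukai kernel (the Poincaré line bundle on $X \times X^\vee$); and the grading element $h$ is given by $\sum_i (i-g)\pi_i$, where $\pi_i$ are the Deninger--Murre projectors of \eqref{dmdec}. The Lie bracket relations defining $\gNS$ hold already in $\End(\Fh(X))$, giving a homomorphism $U(\gNS) \to \End(\Fh(X))$. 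This is essentially the content of \cite[Section 6]{Mon}, except that Moonen states the result on Chow groups.

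Next, the cohomological realization $H^*(X, \BQ)$ is a finite-dimensional $\gNS$-module, so the image of $U(\gNS)$ in $\End(H^*(X, \BQ))$ is a finite-dimensional semisimple subalgebra and contains the isotypic projector for each $\psi \in \mathrm{Irrep}(\gNS)$ that appears. Pick a lift $P_\psi \in U(\gNS)$ of this projector and let $\pi_\psi \in \End(\Fh(X))$ be its image. The key point is that $\pi_\psi$ is idempotent already as a correspondence, not merely in cohomology: using the Chow--K\"unneth decomposition \eqref{dmdec} together with the Lefschetz standard conjecture for $X \times X$ (Lieberman, for abelian varieties), the image of $U(\gNS)$ in $\End(\Fh(X))$ is itself semisimple and maps injectively to its image in $\End(H^*(X))$. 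Consequently the $\pi_\psi$ form a family of orthogonal idempotents summing to $\id$, and setting $\Fh_\psi(X) := (X, \pi_\psi)$ yields the desired decomposition. Uniqueness is immediate: any two such decompositions must coincide because the isotypic projector is uniquely characterized as the central idempotent in the commutant of $\gNS$ with prescribed action on $H^*(X)$.

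The main obstacle is the transition from cohomological to motivic idempotents in the third step. A priori, a polynomial in the $e_\alpha, f_\alpha, h$ that is idempotent in $\End(H^*(X))$ need not be idempotent in $\End(\Fh(X))$; one could only extract motivic summands of $\Fh(X)$ cut out by idempotents that one already knows to be algebraic. For abelian varieties this obstruction dissolves because the K\"unneth projectors are algebraic and the Lefschetz standard conjecture is known, so one stays inside a semisimple algebra of correspondences throughout. The same difficulty will be the central issue for the K3 Hilbert scheme setting of the present paper.
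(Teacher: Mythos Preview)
Your third step contains a genuine gap. The Lefschetz standard conjecture for $X\times X$ (Lieberman) only asserts that the inverse Lefschetz operators are given by algebraic correspondences; it does \emph{not} tell you that the cycle class map is injective on the subalgebra of $\End(\Fh(X))$ generated by $e_\alpha,f_\alpha,h$. That injectivity is essentially an instance of Beauville's conjecture on $A^*(X\times X)_0$ and is not known. So a lift $P_\psi\in U(\gNS)$ of a cohomological idempotent may very well fail to give an idempotent correspondence, and your argument for orthogonality of the $\pi_\psi$ collapses. Likewise, ``the image of $U(\gNS)$ in $\End(\Fh(X))$ is semisimple'' is not a formal consequence of the semisimplicity of the Lie algebra $\gNS$: finite-dimensional quotients of $U(\gNS)$ need not be semisimple as associative algebras.

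The paper (in its proof of the parallel Theorem~\ref{thm:redec} for the Hilbert scheme, Section~\ref{sec:refdec}) avoids cohomology altogether, and this is the point you are missing. One first shows that the image $W\subset A^*(X\times X)$ of $U(\gNS)$ is finite-dimensional. Then $W$ is a finite-dimensional $\gNS$-representation under \emph{left} composition, hence decomposes into isotypic pieces $W=\bigoplus_\psi W_\psi$ by semisimplicity of $\gNS$. Writing $\Delta_X=\sum_\psi P_\psi$ with $P_\psi\in W_\psi$, the crucial observation is that \emph{right} composition by any $a\in W$ is a $\gNS$-intertwiner, so $W_\psi\circ W\subset W_\psi$. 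Feeding $a\in W_{\psi'}$ into $a=\Delta_X\circ a=\sum_\psi P_\psi\circ a$ and comparing isotypic components gives $P_\psi\circ P_{\psi'}=\delta_{\psi\psi'}P_\psi$ directly at the level of correspondences. (Moonen's original argument uses Yoneda's Lemma to the same effect; the paper's version is constructive.) No passage through $H^*(X)$, and no appeal to any standard conjecture, is required.
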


Here being \emph{$\psi$-isotypic} means that $\Fh_\psi(X)$ is stable under $\gNS$ and
that for any Chow motive~$M$, the $\gNS$-representation $\Hom(M, \Fh_\psi(X))$ is isomorphic to a direct sum of copies of $\psi$. 

Again \eqref{modec} specializes to refined decompositions in cohomology and in Chow.

\subsection{Chow--K\"unneth}
We switch to the case of the Hilbert scheme. Let $S$ be a projective K3 surface over an algebraically closed field of characteristic $0$, and let~$X = \Hilb_n(S)$ be the Hilbert scheme of $n$ points on~$S$.

In~\cite{Ob}, the second author lifted the action of the N\'eron--Severi part of the LLV algebra~$\Fg_{\mathrm{NS}}$ from cohomology to Chow. In particular, there is an explicit grading operator:
\[h \in A^{2n}(X \times X)\]
which appears in every $\mathfrak{sl}_2$-triple $(e_\alpha, f_\alpha, h)$ in $\Fg_{\mathrm{NS}}$.
We normalize $h$ so that it acts on $H^{2i}(X)$ by multiplication by $i-n$.

We regard $h$ as a natural replacement for the operator $[N]^*$ in the abelian variety case. Our first result decomposes the Chow motive $\Fh(X)$ into eigenmotives of~$h$.

\begin{thm} \label{thm:Decomposition}
There is a unique Chow--K\"unneth decomposition:
\begin{equation} \label{ckdec}
\mathfrak{h}(X) = \bigoplus_{i = 0}^{2n} \Fh^{2i}(X)
\end{equation}
such that $h$ acts on $\Fh^{2i}(X)$ by multiplication by $i - n$.
\end{thm}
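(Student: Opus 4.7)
The plan is to construct the Chow--K\"unneth projectors explicitly as polynomials in the correspondence $h$ via Lagrange interpolation, thereby reducing the theorem to a single polynomial vanishing identity for $h$ in the ring $A^{2n}(X \times X)$. Writing $\lambda_i := i - n$ for $i = 0, \ldots, 2n$ for the eigenvalues of $h$ on cohomology, I define candidate projectors
\[
\pi^{2i} \ := \ \prod_{\substack{j = 0 \\ j \neq i}}^{2n} \frac{h - \lambda_j\, \Delta_X}{\lambda_i - \lambda_j} \ \in \ A^{2n}(X \times X),
\]
where products denote composition of self-correspondences on $X$ and $\Delta_X$ is the identity. Since $h$ commutes with itself this is an unambiguous polynomial in $h$, and since $h$ acts on $H^{2i}(X)$ by $\lambda_i$ while $H^{\mathrm{odd}}(X) = 0$, the cohomology class of $\pi^{2i}$ is the K\"unneth projector onto $H^{2i}(X)$.

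The heart of the argument is to lift the cohomological vanishing $\prod_i(h - \lambda_i\, \Delta_X) = 0$ to the Chow level, i.e., to establish
\[
P(h) \ := \ \prod_{i = 0}^{2n} (h - \lambda_i\, \Delta_X) \ = \ 0 \quad \text{in } A^{2n}(X \times X).
\]
This is the main obstacle of the proof. To attack it I would exploit the explicit Chow-level description of $h$ from \cite{Ob}, which rests on the Maulik--Neguț operator formulas \cite{MN}: these present $h$ as a concrete sum of Nakajima-type correspondences, so the composition $P(h)$ is in principle computable, and its vanishing should be forced by the known action on cohomology once the algebra of such correspondences is sufficiently well understood. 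An alternative route is to exploit the $\sl_2$-triple $(e_\alpha, f_\alpha, h) \subset \gNS$ acting on $A^\ast(X)$ and argue by representation theory, but the direct verification via \cite{Ob} and \cite{MN} is likely the cleanest.

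Once $P(h) = 0$ holds in Chow, the classical Lagrange identities $\sum_i L_i(x) = 1$, $L_i(x) L_j(x) \equiv \delta_{ij} L_i(x)$, and $x\, L_i(x) \equiv \lambda_i L_i(x)$ modulo $P(x)$ (with $L_i(x) = \prod_{j \neq i}(x - \lambda_j)/(\lambda_i - \lambda_j)$) translate immediately into
\[
\sum_i \pi^{2i} = \Delta_X, \qquad \pi^{2i} \circ \pi^{2j} = \delta_{ij}\, \pi^{2i}, \qquad h \circ \pi^{2i} = \pi^{2i} \circ h = \lambda_i\, \pi^{2i}.
\]
Setting $\Fh^{2i}(X) := (X, \pi^{2i})$ therefore yields a motivic decomposition, which is Chow--K\"unneth by the cohomological identification of the first paragraph, and on which $h$ acts as claimed. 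For uniqueness, given another system $\{\tilde{\pi}^{2i}\}$ satisfying the theorem, the fact that $h$ preserves each $\tilde{\pi}^{2i}\Fh(X)$ as $\lambda_i \cdot \id$, combined with the orthogonality $\tilde{\pi}^{2i} \circ \tilde{\pi}^{2j} = \delta_{ij}\tilde{\pi}^{2i}$, yields $h \circ \tilde{\pi}^{2i} = \tilde{\pi}^{2i} \circ h = \lambda_i\, \tilde{\pi}^{2i}$. Evaluating $\tilde{\pi}^{2i} \circ h \circ \pi^{2j}$ in two ways then gives $(\lambda_i - \lambda_j)\, \tilde{\pi}^{2i} \circ \pi^{2j} = 0$, so $\tilde{\pi}^{2i} \circ \pi^{2j} = 0$ for $i \neq j$ (and similarly $\pi^{2j} \circ \tilde{\pi}^{2i} = 0$); completeness of both families then gives $\pi^{2i} = \pi^{2i} \circ \tilde{\pi}^{2i} = \tilde{\pi}^{2i}$.
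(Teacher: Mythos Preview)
Your Lagrange-interpolation framework is clean and, once the identity $P(h)=\prod_{i=0}^{2n}(h-\lambda_i\Delta_X)=0$ is known in $A^{2n}(X\times X)$, everything you write afterwards (orthogonality, completeness, the eigenvalue relation, uniqueness) is correct. The gap is precisely the step you flag as ``the main obstacle'': you do not prove $P(h)=0$ in Chow. Saying that $P(h)$ is ``in principle computable'' from the Nakajima presentation, or that vanishing ``should be forced by the known action on cohomology'', is not an argument---vanishing in cohomology is automatic, and the whole content of the theorem is lifting this to Chow. The $\sl_2$ route you mention is also not immediate: one would need to know that the relevant representation is locally finite (equivalently that $e_\alpha$ and $f_\alpha$ are both locally nilpotent on the correspondence ring), which again requires input beyond the bare relations.

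The paper avoids this obstacle entirely by working in the opposite direction. Rather than proving a polynomial identity for $h$ and then extracting projectors, it writes down the projectors $P_i$ explicitly: starting from the de~Cataldo--Migliorini decomposition of $\Delta_{\Hilb_n}$ in Nakajima operators, it inserts the Beauville--Voisin decomposition $\Delta_S=\pi_{-1}+\pi_0+\pi_1$ of the diagonal of $S$ into eigenprojectors for $h|_{A^\ast(S)}$, and collects terms by weight. The resulting $P_i$ are then checked to satisfy $P_iP_j=\delta_{ij}P_i$ and $h\circ P_i=iP_i$ by direct commutator computations with Nakajima operators (Claim in \S3.2). Note that these two facts together \emph{imply} your identity $P(h)=0$, since $P(h)=P(h)\sum_iP_i=\sum_iP(i)P_i=0$; so your approach can be completed, but only by doing essentially the paper's computation, at which point the interpolation is superfluous. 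The paper's method also has the bonus of identifying the decomposition with the one previously constructed by Vial.
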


The mutually orthogonal projectors in the decomposition  \eqref{ckdec} are written explicitly in terms of the Heisenberg algebra action \cite{Groj, Nak}. We also show that~\eqref{ckdec} agrees with the Chow--K\"unneth decomposition obtained by de Cataldo--Migliorini~\cite{dCM} and Vial \cite{Vial}.

As before the decomposition \eqref{ckdec} specializes to a decomposition in Chow:
\begin{equation} \label{chowdec}
A^*(X) = \bigoplus_{i, s} A^i(X)_{2s}
\end{equation}
with:
\[A^i(X)_{2s} = A^i(\Fh^{2i - 2s}(X)) = \{\alpha \in A^i(X) \,|\, h(\alpha) = (i - s - n) \alpha\}.\]

\subsection{Multiplicativity}
\label{sub:mult}

In the seminal paper \cite{Be}, Beauville raised the question of whether hyper-K\"ahler varieties behave similarly to abelian varieties in the sense that the conjectural Bloch--Beilinson filtration also admits a multiplicative splitting. As a test case, he conjectured that for a hyper-K\"ahler variety, the cycle class map is injective on the subring generated by divisor classes.

For the Hilbert scheme of points of a K3 surface, Beauville's conjecture was recently proven in \cite{MN}; see also~\cite{Ob} for a shorter proof. But the ultimate goal remains to find the multiplicative splitting. Meanwhile, Shen and Vial \cite{SV, SV2} introduced the notion of a \emph{multiplicative Chow--K\"unneth decomposition}, upgrading Beauville's question from Chow groups to the level of correspondences/Chow motives.

The main result of this paper confirms that \eqref{ckdec} provides a multiplicative Chow--K\"unneth decomposition for the Hilbert scheme.

\begin{thm} \label{main}
Let $S$ be a projective K3 surface and let $X = \eHilb_n(S)$.
\begin{enumerate}
\item The Chow--K\"unneth decomposition \eqref{ckdec} is multiplicative, \emph{i.e.}, the cup product:
\[\cup: \Fh(X) \otimes \Fh(X) \to \Fh(X)\]
respects the grading, in the sense that: 
\[\cup: \Fh^{2i}(X) \otimes \Fh^{2j}(X) \to \Fh^{2i + 2j}(X)\]
for all $i, j \in \{0, ..., 2n\}$. As a result, the bigrading in \eqref{chowdec} is multiplicative.

\item All divisor classes and Chern classes of $X$ belong to $A^*(X)_0$.
\end{enumerate}
\end{thm}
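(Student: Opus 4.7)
The plan is to reduce both statements to explicit operator identities that can be verified using the formulas of \cite{MN} for the LLV grading element $h$ in terms of Nakajima operators. Set $\widetilde{h} := h + n \cdot \mathrm{id}_X$, so that $\widetilde{h}$ acts on $H^{2i}(X)$ (and on $\Fh^{2i}(X)$) by multiplication by $i$. The multiplicativity of \eqref{ckdec} is then equivalent to the assertion that $\widetilde{h}$ is a \emph{derivation} with respect to cup product at the level of correspondences: writing $\mu \in A^{2n}(X \times X \times X)$ for the small diagonal, we must show
\[
\widetilde{h} \circ \mu \;=\; \mu \circ \bigl( \widetilde{h} \otimes \mathrm{id} + \mathrm{id} \otimes \widetilde{h} \bigr).
\]
If this holds, then $\alpha \in \Fh^{2i}(X)$ and $\beta \in \Fh^{2j}(X)$ force $\alpha \cup \beta$ to be an $\widetilde{h}$-eigenclass with eigenvalue $i + j$, hence to lie in $\Fh^{2(i+j)}(X)$, giving part (1).

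To verify the derivation identity, I would first rewrite $\widetilde{h}$ in terms of the Chow-level Heisenberg/Nakajima operators $\mathfrak{q}_k(\gamma)$ of \cite{Groj, Nak} acting on $\bigoplus_n A^*(\Hilb_n(S))$, using the explicit formula of \cite{MN}; this expresses $\widetilde{h}$ essentially as a grading-by-half-cohomological-weight operator on the Fock space. Next I would use Lehn's universal formula (promoted to Chow by \cite{MN}) to present $\mu$ as a specific sum of compositions of Nakajima operators. The commutator $[\widetilde{h}, \mu]$ then reduces to a purely combinatorial identity counting how $\widetilde{h}$ transforms each monomial of Nakajima creators appearing in $\mu$, and matching the sum of eigenvalues on the two tensor factors. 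This commutator computation is the main obstacle; however it is structurally forced by the fact that the cohomological analogue is trivial (both sides act as multiplication by cohomological degree), so at worst one has to track the corrections coming from the derived Lie brackets between the diagonal operators in Lehn's formula and the factors of $\widetilde{h}$.

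For part (2), the statement about divisors follows from the construction of the Chow-level LLV action in \cite{Ob}: every class in $A^1(X)$ is a linear combination of classes of the form $\mathfrak{q}_1(\gamma) \mathfrak{q}_1(1)^{n-1} | \emptyset \rangle$ with $\gamma \in A^1(S)$, together with the exceptional class~$\delta$, and the explicit formula for $\widetilde{h}$ as a bigrading operator shows that each such generator is an $\widetilde{h}$-eigenclass of eigenvalue $1$, i.e., lies in $A^1(X)_0$.

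The Chern class statement is the more delicate part of (2). The plan is to use the multiplicativity proved in part (1) to reduce to checking that the total Chern class $c(T_X)$ is an $\widetilde{h}$-eigenclass in the sense that each $c_i(T_X)$ has eigenvalue $i$; by multiplicativity this is equivalent to checking it for a suitable set of tautological generators. Concretely I would write $c_i(T_X)$ via the tautological/Nakajima presentation used in \cite{MN}---where $T_X$ appears as a tautological class attached to $\CO_S$---and then read off the eigenvalue directly from the bigrading formula for $\widetilde{h}$. Alternatively, one can argue that $c_i(T_X)$ lies in the subalgebra generated by $A^1(X)_0$ and $A^0(X)_0$ under the cup product and the operators that preserve $A^*(X)_0$ (in particular certain Chern-class-valued correspondences built from tautological bundles on $S$), in which case (1) and the divisor case above finish the proof.
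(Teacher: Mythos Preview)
Your overall strategy for part (i)---reduce multiplicativity to the derivation identity $[\widetilde{h},\mult_x]=\mult_{\widetilde{h}(x)}$ at the level of correspondences---is exactly what the paper does. But two points in your plan are genuine gaps rather than details.

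First, you propose to ``present $\mu$ as a specific sum of compositions of Nakajima operators'' via Lehn's formula. There is no such closed formula for the full small diagonal. What Lehn/\cite{LQW}/\cite{MN} actually supply are formulas for multiplication by \emph{individual} universal classes: the operators $\fG_d(\gamma)=\mult_{\univ_d(\gamma)}$ are identified with the $\fJ_0^{d-1}$ operators of \cite{LQW}. The paper computes $[h,\fG_d(\gamma)]$ one $d$ at a time, iterates to get $[h,\fG_{d_1}\cdots\fG_{d_t}(\Gamma)]$, and then invokes the fact that \emph{every} class on $\Hilb_n$ is universal (the Markman-type surjectivity \eqref{eqn:surj}) to pass from ``derivation on multiplication by universal classes'' to the full correspondence identity on $X^3$. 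Without this reduction you have no concrete object to commute $\widetilde{h}$ against.

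Second, and more seriously, you say the commutator computation is ``structurally forced by the fact that the cohomological analogue is trivial.'' This is precisely where the content lies and where being a K3 surface matters. The commutator $[h,\fJ_0^d(\gamma)]$ produces, via \eqref{eqn:lqw new}, expressions involving the small diagonals $\Delta_{1\ldots k}\subset S^k$ hit by the ``bar'' operation $\Phi\mapsto\sum_i\int_\bullet\Phi_{\ldots\bullet\ldots}(c_i-c_\bullet)$. That this closes up to $\fJ_0^d$ of another class is \emph{not} automatic: it requires the Beauville--Voisin relations \eqref{eqn:bv 3}--\eqref{eqn:bv 6} on $A^*(S^k)$ (decomposition of small diagonals in terms of the canonical class $c$). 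The paper isolates this in Claim~\ref{claim:3}, and without it the argument does not go through in Chow even though it is trivial in cohomology.

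For part (ii), your divisor argument via $\fq_1(\gamma)\fq_1(1)^{n-1}v$ is fine. For Chern classes, the paper does essentially what you sketch first: write $\ch(\Tan_X)$ explicitly as a sum of universal classes $\univ_d(\gamma)$ and $\univ_{d,d'}(\Delta_*\gamma)$ with $\gamma\in\{1,c\}$, and then check the eigenvalue condition \eqref{eqn:want} directly using Claim~\ref{claim:3}. Your alternative (``$c_i(T_X)$ lies in the subalgebra generated by $A^1(X)_0$\ldots'') is not known independently and would be circular here.
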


Part (ii) of Theorem \ref{main} is related to the Beauville--Voisin conjecture \cite{Voi}, which predicts that for a hyper-K\"ahler variety, the cycle class map is injective on the subring generated by divisor classes and Chern classes. In the Hilbert scheme case, one may further ask the vanishing $A^*(X)_{2s} = 0$ for $s < 0$ and the injectivity of the cycle class map:
\[\mathrm{cl}: A^*(X)_0 \to H^*(X).\]
We do not tackle these questions in the present paper. 

The key to the proof of Theorem \ref{main} (i) is the compatibility between the grading operator $h$ and the cup product. For example, at the level of Chow groups, we show that the operator:
\[\widetilde{h} = h + n \Delta_X \in A^{2n}(X \times X)\]
acts on $A^*(X)$ by derivations, \emph{i.e.}:
\begin{equation}
\label{eqn:derivation}
\widetilde{h}(x \cdot x') = \widetilde{h}(x) \cdot x' + x \cdot \widetilde{h}(x')
\end{equation}
for all $x, x' \in A^*(X)$. We achieve this by explicit calculations using the Chow lifts~\cite{MN} of the well-known machinery for the Heisenberg algebra action~\cite{Lehn, LQW}, and our argument yields \eqref{eqn:derivation} at the level of correspondences; see Section \ref{sec:mult}. Once the compatibility is established, Theorem \ref{main} (i) is deduced by simply comparing the eigenvalues of $\widetilde{h}$.

\subsection{Previous work}
Theorem \ref{main} was previously obtained by Vial \cite{Vial} based on Voisin's announced result \cite[Theorem 5.12]{Voi2} on \emph{universally defined cycles}. A second proof, also relying on Voisin's theorem, was given by Fu and Tian~\cite{FT}. They interpreted Theorem \ref{main} (i) as the motivic incarnation of Ruan's crepant resolution conjecture~\cite{Ru}. Our proof has the advantage of being explicit and unconditional at the moment.

We note that multiplicative Chow--K\"unneth decompositions, for both hyper-K\"ahler and non-hyper-K\"ahler varieties, have been studied in \cite{FLV, FLV2, FT0, FTV, FV, FV2, LV}.

\subsection{Refined decomposition}

We further obtain a refined decomposition of the Chow motive $\Fh(X)$ with respect to the action of the N\'eron--Severi part of the LLV algebra~$\Fg_{\mathrm{NS}}$. Both the statement and the proof parallel the abelian variety case.

\begin{thm} \label{thm:redec}
Let $S$ be a projective K3 surface and let $X = \eHilb_n(S)$. There is a unique decomposition:
\begin{equation} \label{redec}
\Fh(X) = \bigoplus_{\psi \in \mathrm{Irrep}(\Fg_{\mathrm{NS}})}\Fh_\psi(X)
\end{equation}
where $\psi$ runs through all isomorphism classes of finite-dimensional irreducible representations of $\Fg_{\mathrm{NS}}$, and $\Fh_\psi(X)$ is $\psi$-isotypic under $\gNS$.
\end{thm}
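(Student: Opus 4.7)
The plan is to imitate Moonen's proof \cite{Mon} for the abelian variety case, with the Chow lift of the LLV action from \cite{Ob} as the starting point. This lift furnishes a ring homomorphism
\[
\rho \colon U(\gNS) \to \End(\Fh(X))
\]
valued in correspondences in $A^{2n}(X \times X)$. Since $\gNS$ is reductive and $H^*(X)$ is finite-dimensional, only finitely many irreducibles $\psi$ occur in $H^*(X)$, and the image of $U(\gNS)$ in $\End(H^*(X))$ is a finite-dimensional semisimple algebra. The cohomological projector onto $H^*(X)_\psi$ is therefore the image of a central element $z_\psi \in Z(U(\gNS))$, explicitly a polynomial in the Casimir elements. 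Setting $\pi_\psi := \rho(z_\psi) \in A^{2n}(X \times X)$ produces a correspondence whose cohomological realization is this $\psi$-isotypic projector, and we propose to define $\Fh_\psi(X)$ as the image of $\pi_\psi$ in $\Fh(X)$. Since $z_\psi$ is central, $\pi_\psi$ commutes with the Chow-lifted $\gNS$-action, so $\Fh_\psi(X)$ is automatically $\gNS$-stable; the $\psi$-isotypy of $\Hom(M, \Fh_\psi(X))$ for every motive $M$ will then follow from the central-idempotent construction, once Chow-level orthogonality is in hand.

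The crux is to upgrade the cohomological orthogonality and idempotency $e_\psi e_{\psi'} = \delta_{\psi \psi'} e_\psi$ to the Chow level. Equivalently, one must show that the Casimir $C \in Z(U(\gNS))$ acts semisimply on $\Fh(X)$, as a correspondence, with the same finite eigenvalue set $\{c_\psi\}$ as on cohomology. Here Theorem \ref{thm:Decomposition} is essential: since $[h, C] = 0$, the correspondence $\rho(C)$ commutes with every Chow--K\"unneth projector, hence preserves each piece $\Fh^{2i}(X)$. For each $i$, the plan is to establish the minimal polynomial identity
\[
\prod_{\psi} \bigl(\rho(C) - c_\psi\bigr) = 0 \quad \text{in } \End(\Fh^{2i}(X)),
\]
where the product ranges over the $\psi$ appearing in $h$-weight $i - n$. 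Once this identity holds at the Chow level, Lagrange interpolation in $\rho(C)$ produces the $\pi_\psi$ as genuine orthogonal idempotents that simultaneously split every Chow--K\"unneth component; summing over $i$ yields \eqref{redec}. Uniqueness is then automatic: any $\psi$-isotypic direct summand of $\Fh(X)$ must coincide with the image of the central idempotent, which is pinned down by its cohomological realization.

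The main obstacle is this Chow-level minimal polynomial identity: a priori $\rho(C)$ could carry a nontrivial nilpotent component on $\Fh^{2i}(X)$ that only vanishes in cohomology, and ruling this out is the heart of the argument. The proposed route is to expand $\rho(C)$ explicitly in the Chow-lifted Nakajima/Heisenberg operators of \cite{MN} and verify the polynomial relation directly, as an identity among nested-Hilbert-scheme correspondences, in the same spirit as the commutator calculations underlying Theorem \ref{main}(i) (see Section \ref{sec:mult}). Equivalently, the plan is to show that the subalgebra of $\End(\Fh(X))$ generated by $\rho(U(\gNS))$ together with the Chow--K\"unneth projectors is already finite-dimensional and semisimple, at which point the mutually orthogonal $\pi_\psi$ appear automatically as its central idempotents.
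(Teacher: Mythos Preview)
Your proposal correctly identifies the central difficulty---ruling out a nilpotent component of $\rho(C)$ at the Chow level---but the route you sketch for overcoming it (a direct Nakajima-operator verification of the minimal polynomial, or establishing semisimplicity of the subalgebra generated by $\rho(U(\gNS))$ and the Chow--K\"unneth projectors) is both vague and harder than necessary. There is also a secondary issue: the quadratic Casimir alone need not separate all irreducible $\gNS$-representations, so your $z_\psi$ would have to be built from a full family of higher Casimirs, and you give no indication of how to control those.

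The paper's argument bypasses all of this with a single structural lemma: the image $W = \act(U(\gNS)) \subset A^{2n}(X \times X)$ is \emph{finite-dimensional}. This is proved by exhibiting a finite-dimensional subspace $\widetilde{W}$, spanned by normally ordered products $\fq_{\lambda_1}\cdots\fq_{\lambda_{l(\lambda)}}\fq_{-\mu_1}\cdots\fq_{-\mu_{l(\mu)}}(\Gamma)$ with $\Gamma$ in the Beauville--Voisin tautological ring $R^*(S^{l(\lambda)+l(\mu)})$, checking that $\widetilde{W}$ is closed under composition (via the Heisenberg relations), and observing that the explicit generators \eqref{LLV_operators} of $\gNS$ all lie in $\widetilde{W}$. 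Once $W$ is known to be finite-dimensional, it is a $\gNS$-representation under left composition, hence splits as $W = \bigoplus_\psi W_\psi$; writing $\Delta_X = \sum_\psi P_\psi$ with $P_\psi \in W_\psi$, orthogonality $P_\psi \circ P_{\psi'} = \delta_{\psi\psi'} P_\psi$ follows because \emph{right} composition by any $a \in W$ is a $\gNS$-intertwiner, so $W_\psi \circ W \subset W_\psi$. No Casimir computation, no minimal polynomial, no appeal to Theorem~\ref{thm:Decomposition} is needed. Your Route 2 is in spirit close to this, but the paper's version is sharper: finite-dimensionality of $W$ alone, plus the trivial observation that $\Delta_X \in W$, does all the work.
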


Consider the weight decomposition:
\[\Fg_{\mathrm{NS}} = \Fg_{\mathrm{NS}, -2} \oplus \Fg_{\mathrm{NS}, 0} \oplus \Fg_{\mathrm{NS}, 2}, \quad \Fg_{\mathrm{NS}, 0} = \overline{\Fg}_{\mathrm{NS}} \oplus \BQ \cdot h\]
where $\overline{\Fg}_{\mathrm{NS}}$ is the N\'eron--Severi part of the \emph{reduced LLV algebra} (terminology taken from \cite{GKLR}). Let $\overline{\Ft} \subset \overline{\Fg}_{\mathrm{NS}}$ be a Cartan subalgebra and write:
\[ \Ft = \overline{\Ft} \oplus \BQ \cdot \widetilde{h}.\]
Then the decomposition  \eqref{redec} implies a motivic decomposition in terms of the irreducible representations (\emph{i.e.}, characters) of $\Ft$:
\begin{equation} \label{redec cartan}
\Fh(X) = \bigoplus_{\lambda \in \Ft^*} \Fh_\lambda(X).
\end{equation}
The decomposition \eqref{redec cartan} specializes to a refined decomposition in Chow:
\begin{equation} \label{refined in chow}
A^{\ast}(X) = \bigoplus_{i,s, \mu \in \overline{\Ft}^{\ast}} A^i(X)_{2s,\mu}
\end{equation}
where (with $\lambda = (i - s) \widetilde{h}^{\ast} + \mu$) we let:
\[
A^i(X)_{2s,\mu} = A^i(\Fh_{\lambda}(X)) = \{\alpha \in A^i(X)_{2s} \,|\, h_{v}(\alpha) = \mu(v) \alpha \text{ for all } v \in \overline{\Ft} \}.
\]

\begin{thm}
\label{thm:ref} \leavevmode As above, let $X = \emph{Hilb}_n(S)$ with $S$ a projective K3 surface.  

\begin{enumerate}
\item The decomposition \eqref{redec cartan} is multiplicative, \emph{i.e.}, the cup product respects the weight decomposition, in the sense that:
\begin{equation}
\label{eqn:cup}
\cup: \Fh_\lambda(X) \otimes \Fh_\mu(X) \to \Fh_{\lambda + \mu}(X)
\end{equation}
for all $\lambda, \mu \in \Ft^*$. 
As a result, the triple grading in \eqref{refined in chow} is multiplicative.

\item All Chern classes of $X$ belong to $A^{\ast}(X)_{0, 0}$.
\end{enumerate}
\end{thm}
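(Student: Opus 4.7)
The plan parallels the proof of Theorem \ref{main}: we reduce \eqref{eqn:cup} to a derivation property for each Cartan element, then conclude by comparing eigenvalues.

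Multiplicativity of \eqref{redec cartan} is equivalent to the Leibniz rule
\[
h_v(x \cdot x') = h_v(x) \cdot x' + x \cdot h_v(x')
\]
for every $v \in \Ft$ and $x, x' \in A^*(X)$, so that the weight spaces combine correctly under cup product. For $v = \widetilde{h}$, this is precisely Theorem \ref{main}(i). For $v \in \overline{\Ft}$, we claim no scalar shift is required: any Leibniz-up-to-shift identity, evaluated at $x = x' = 1$, would force the shift to equal $h_v(1)$; but $h_v(1) = 0$, since $\BQ \cdot 1 \subset A^*(X)$ is a trivial subrepresentation of $\overline{\Fg}_{\mathrm{NS}}$ (the subalgebra $\overline{\Fg}_{\mathrm{NS}}$ preserves cohomological degree and acts semisimply on the one-dimensional space $H^0(X)$). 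Thus each $h_v$, $v \in \overline{\Ft}$, must act as a genuine derivation on $A^*(X)$, and we aim to prove this at the level of correspondences.

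To verify the derivation property, we mirror Section \ref{sec:mult}. Using the Chow lift of the $\overline{\Fg}_{\mathrm{NS}}$-action from \cite{Ob}, we express $h_v \in A^{2n}(X \times X)$ as a polynomial in the Nakajima--Heisenberg operators $\Fq_k(\alpha)$ of \cite{Groj, Nak}. The Leibniz identity becomes an equality of correspondences in $A^{4n}(X^3)$ between $h_v$ composed with the small diagonal class and $h_v \otimes \Id + \Id \otimes h_v$ composed with the cup product correspondence. It suffices to test on Nakajima generators $\Fq_{k_1}(\alpha_1) \cdots \Fq_{k_r}(\alpha_r) |0\rangle$, reducing to explicit Chow-lifted Heisenberg commutator identities from \cite{MN, Lehn, LQW}. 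The main obstacle is purely combinatorial: a typical Cartan element $h_v$, $v \in \overline{\Ft}$, involves two NS-inputs on $S$ rather than the essentially diagonal input of $\widetilde{h}$, so the bookkeeping is heavier, though structurally identical to that in Section \ref{sec:mult}.

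For part (ii), Theorem \ref{main}(ii) already places $c_k(X) \in A^*(X)_0$, so we must show $h_v(c_k(X)) = 0$ for all $v \in \overline{\Ft}$, equivalently that $c_k(X)$ lies in the trivial $\overline{\Fg}_{\mathrm{NS}}$-isotypic component. This reflects the canonical nature of Chern classes, which have no dependence on the auxiliary NS data used to define the $\overline{\Fg}_{\mathrm{NS}}$-action. Concretely, one writes $c_k(X)$ as a Nakajima-operator expression in the Chern classes of $S$ (which are $\overline{\Fg}_{\mathrm{NS}}$-invariant: on $S$ itself the reduced LLV algebra preserves $H^0$, $H^4$, and the transcendental part, annihilating $c_2(S) \in \BQ\cdot\pt$), and verifies $h_v(c_k(X)) = 0$ by direct calculation with the formulas of \cite{Ob, MN}.
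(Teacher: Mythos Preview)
Your overall plan matches the paper's: establish the derivation identity \eqref{eqn:derivation ref} for each Cartan generator at the level of correspondences, then read off multiplicativity by comparing weights. But the step ``it suffices to test on Nakajima generators $\Fq_{k_1}(\alpha_1)\cdots\Fq_{k_r}(\alpha_r)|0\rangle$'' is where the proposal and the paper diverge, and where your sketch has a gap.

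Testing a putative identity of correspondences on a spanning set of $A^*(X)$ only yields equality of the induced maps on Chow groups, not equality in $A^*(X\times X\times X)$; the latter is what multiplicativity of the motivic decomposition requires. The paper does \emph{not} test on elements. Instead it computes, as correspondences $A^*(\Hilb)\to A^*(\Hilb\times S)$, the commutators $[h_{\alpha\beta},\fG_d]$ and $[h_{\alpha\delta},\fG_d]$ (Propositions~\ref{prop:comm 1} and~\ref{prop:comm 2}), where $\fG_d(\gamma)=\mult_{\univ_d(\gamma)}$ is multiplication by a universal class. The outcome in each case is again a multiplication operator, so iterating gives $[h_v,\mult_{\univ_{d_1,\dots,d_t}(\Gamma)}]=\mult_{y}$ for an explicit $y$. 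Applying this correspondence to $1_n$ and using $h_v(1_n)=0$ (Lemma~\ref{lem:tiny}) identifies $y=h_v(\univ_{d_1,\dots,d_t}(\Gamma))$. Finally, one composes the $S^t$-indexed identity with the explicit right inverse coming from \eqref{eqn:last}--\eqref{eqn:surj} to obtain the desired equality in $A^*(\Hilb_n^3)$. The $h_{\alpha\delta}$ commutator (Proposition~\ref{prop:comm 2}) is the genuinely laborious step: the right-hand side is no longer a single $\fG_d$ but a combination of $\fG_2\fG_{d-1}$ and $\fG_{d\pm 1}$ terms, and the proof occupies several pages of normal-ordering and diagonal identities.

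Two smaller points. Your argument for $h_v(1_n)=0$ via semisimplicity on $H^0$ is fine once one notes $A^0(X)\cong H^0(X)$, but the paper simply computes it directly (Lemma~\ref{lem:tiny}). For part~(ii), your sketch (``Chern classes of $S$ are $\overline{\Fg}_{\mathrm{NS}}$-invariant, so $c_k(X)$ is too'') is not how the paper proceeds and is not obviously sufficient: the Nakajima expression for $\ch_k(\Tan_X)$ involves the diagonal $\Delta\in A^*(S^2)$, not just $c_2(S)$. The paper instead writes $\mult_{\ch_k(\Tan_X)}$ explicitly in the $\fG_d$'s (equation~\eqref{mult_tan}) and checks, using the commutation relations of Proposition~\ref{prop:comm 2}, that $[h_{\alpha\delta},\mult_{\ch_k(\Tan_X)}]=0$ by a direct cancellation.
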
 

By a result of Markman \cite{Markman}, $\overline{\Fg}$ (namely the entire reduced LLV algebra, instead of just its N\'eron-Severi part; see \cite{GKLR}) is the Lie algebra of the monodromy group of~$X$. Since the images of Chern classes in cohomology are invariant under monodromy,
they are of weight $0$ with respect to $\overline{\Ft}$.
We see that part (ii) of Theorem~\ref{thm:ref} confirms the expectation from the Beauville--Voisin conjecture.

The decompositions \eqref{redec cartan} and \eqref{refined in chow} can be defined also for abelian varieties starting from the decomposition~\eqref{modec} obtained by Moonen.
The analogue of Theorem~\ref{thm:ref} (i) then follows from \cite[Proposition 6.9 (iii)]{Mon}.

Since
$\Ft \subset \overline{\Fg}_{\mathrm{NS}} \oplus \BQ \cdot \widetilde{h}$, Theorem \ref{thm:ref} (i) follows from the statement that any element of $\overline{\Fg}_{\mathrm{NS}}$ acts on $A^*(X)$ by derivations, akin to \eqref{eqn:derivation}. The generators of $\overline{\Fg}_{\mathrm{NS}}$ are denoted by:
$$h_{\alpha \beta} \in A^{2n}(X \times X)$$
and indexed by $\alpha \wedge \beta$ in $\wedge^2(A^1(X))$.
We then deduce Theorem \ref{thm:ref} (i) from the identity:
\begin{equation}
\label{eqn:derivation ref}
h_{\alpha\beta}(x \cdot x') = h_{\alpha\beta}(x) \cdot x' + x \cdot h_{\alpha\beta}(x') 
\end{equation}
for all $\alpha, \beta \in A^1(X)$ (our proof of \eqref{eqn:derivation ref} will be at the level of correspondences; see Section \ref{sec:mult}).

It is natural to ask for an extension of our results to arbitrary hyper-K\"ahler varieties.
By a result of Rie{\ss} \cite{Rie}, the Chow motives of two birational hyper-K\"ahler varieties are isomorphic as graded algebra objects. Moreover the isomorphism preserves Chern classes. Hence our results here apply equally well to any hyper-K\"ahler variety birational to the Hilbert scheme of points of a K3 surface.

\subsection{Conventions}
\label{sub:conv}

Throughout the present paper, Chow groups and Chow motives will be taken with $\BQ$-coefficients. We refer to \cite{MNP} for the definitions and conventions of Chow motives.

We will often switch between the languages of correspondences and operators on Chow groups, in the following sense. Every operator $f : A^*(X) \rightarrow A^*(Y)$ will arise from a correspondence $F \in A^*(X \times Y)$ by the usual construction:
$$
\xymatrix{& X \times Y \ar[ld]_{\pi_1} \ar[rd]^{\pi_2} & \\ X & & Y} \qquad f = \pi_{2*}(F \cdot \pi_1^*)
$$ 
and any compositions and equalities of operators implicitly entail compositions and equalities of correspondences. For example, the operator:
$$\mult_\tau : A^*(X) \rightarrow A^*(X)$$
of cup product with a fixed element $\tau \in A^*(X)$ is associated to the correspondence~$\Delta_*(\tau) \in A^*(X \times X)$, where $\Delta : X \hookrightarrow X \times X$ is the diagonal embedding.

Moreover, a family of operators $f_\gamma : A^*(X) \rightarrow A^*(Y)$ labeled by $\gamma \in A^*(Z)$ will arise from a correspondence $F \in A^*(X \times Y \times Z)$, by the assignment:
$$
f_\gamma \text{ arises from } \pi_{12*}(F \cdot \pi_3^*(\gamma)) \in A^*(X \times Y)
$$
for all $\gamma \in A^*(Z)$. We employ the language of ``operators indexed by $\gamma \in A^*(Z)$" instead of cycles on $X \times Y \times Z$ because it makes manifest the fact that $\gamma$ does not play any role in taking compositions. For instance, the family of operators:
$$
\mult_\gamma : A^*(X) \rightarrow A^*(X)
$$
labeled by $\gamma \in A^*(X)$ is associated to the small diagonal $\Delta_{123} \subset X \times X \times X$.

We will often be concerned with cycles on a variety of the form $S^n = S \times ... \times S$ for a smooth algebraic variety $S$ (most often an algebraic surface). We let:
$$
\Delta_{a_1 ... a_k} \in A^*(S^n)
$$
denote the diagonal $\{(x_1, ..., x_n) \,|\, x_{a_1} = ... = x_{a_k}\}$, for all collections of distinct indices $a_1, ..., a_k \in \{1, ..., n\}$. Moreover, given a class $\Gamma \in A^*(S^k)$, we may choose to write it as $\Gamma_{1 ... k}$ in order to indicate the power of $S$ where this class lives. Then for any collection of distinct indices $a_1,...,a_k \in \{1,...,n\}$, we define:
$$
\Gamma_{a_1 ... a_k} = p_{a_1 ... a_k}^{\ast}(\Gamma) \in A^*(S^n)
$$
where we let $p_{a_1 ... a_k} = (p_{a_1}, ..., p_{a_k}): S^n \to S^k$ with $p_i : S^n \to S$ the projection to the $i$-th factor. Finally, if $\bullet$ denotes any index from $1$ to $k+1$, we write:
$$
\int_\bullet : A^*(S^{k+1}) \rightarrow A^*(S^k)
$$
for the push-forward map which forgets the factor labeled by $\bullet$.

\subsection{Acknowledgements}

We would like to thank Lie Fu, Alina Marian, Davesh Maulik, Junliang Shen, Catharina Stroppel, and Zhiyu Tian for useful discussions. A.~N.~gratefully acknowledges the NSF grants DMS--1760264 and DMS--1845034, as well as support from the Alfred P.~Sloan Foundation. Q.~Y.~was supported by the NSFC grants 11701014, 11831013, and 11890661.

\section{Hilbert schemes}

\subsection{} Throughout the present paper, $S$ will denote a projective K3 surface. In \cite{BV}, Beauville and Voisin studied the class $c \in A^2(S)$ of any closed point on a rational curve in $S$, and they proved the following formulas in $A^*(S)$:
\begin{gather}
\label{eqn:bv 1}
c_2(\Tan_S) = 24c \\
\label{eqn:bv 2}
\alpha \cdot \beta = ( \alpha, \beta ) c
\end{gather}
for all $\alpha, \beta \in A^1(S)$ (above, we write $( \cdot, \cdot ) : A^*(S) \otimes A^*(S) \rightarrow \BQ$ for the intersection pairing). Moreover, we have the following identities in $A^*(S^2)$:
\begin{gather}
\label{eqn:bv 3}
\Delta \cdot c_1 = \Delta \cdot c_2 = c_1 \cdot c_2 \\
\label{eqn:bv 4}
\Delta \cdot \alpha_1 = \Delta \cdot \alpha_2 = \alpha_1 \cdot c_2 + \alpha_2 \cdot c_1 
\end{gather}
where $\Delta \in A^*(S^2)$ is the class of the diagonal, and the following identity in $A^*(S^3)$:
\begin{equation}
\label{eqn:bv 5}
\Delta_{123} = \Delta_{12} \cdot c_3 + \Delta_{13} \cdot c_2 + \Delta_{23} \cdot c_1 - c_1 \cdot c_2 - c_1 \cdot c_3 - c_2 \cdot c_3 .
\end{equation}
By iterating this identity, we obtain the corollary:
\begin{equation}
\label{eqn:bv 6}
\Delta_{1...k} = \sum_{1 \leq i < j \leq k} \Delta_{ij} \prod_{\ell \neq i,j} c_\ell - (k-2) \sum_{i=1}^k \prod_{\ell \neq i} c_\ell.
\end{equation}

\begin{prop}
\label{formulas}

The following formulas hold:
\begin{gather}
\label{eqn:new identity 1}
\gamma_1 c_1 = c_1 \int_\bullet \gamma_\bullet c_\bullet \\
\label{eqn:new identity 2}
\gamma_1 \alpha_1 = c_1 \int_\bullet \gamma_\bullet \alpha_\bullet + \alpha_1 \int_\bullet \gamma_\bullet c_\bullet \\
\label{eqn:new identity 3}
\quad\gamma_1 \Delta_{1...k} = \sum_{i=1}^k \gamma_i \prod_{j \neq i} c_j + \left(\Delta_{1...k} - \sum_{i=1}^k \prod_{j \neq i }c_j \right) \int_\bullet c_\bullet \gamma_\bullet - (k-1) c_1 ... c_k \int_\bullet \gamma_\bullet 
\end{gather} 
for any $\gamma \in A^*(S \times S^l)$, where only the first index of $\gamma$ appears in the equations above (the latter $l$ indices are simpy the same on the left and right-hand sides).

\end{prop}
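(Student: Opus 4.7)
\medskip
\noindent\textbf{Proof proposal.} The three identities share a common origin, and my plan is to reduce all of them to the Beauville--Voisin identities \eqref{eqn:bv 3}--\eqref{eqn:bv 6} via a single trick. For any $\gamma \in A^*(S \times S^l)$, rename the first index as $\bullet$ and observe that
\[
\gamma_1 \ = \ \int_\bullet \gamma_\bullet \cdot \Delta_{1\bullet},
\]
which is just the identity $p_{1*}(p_\bullet^* \gamma_\bullet \cdot \Delta_{1\bullet}) = \gamma_1$ (up to the spectator $S^l$ factors). Multiplying both sides by the distinguished factor appearing on the left of each identity to be proved (respectively $c_1$, $\alpha_1$, $\Delta_{1...k}$), and then using the Beauville--Voisin relations to simplify $\Delta_{1\bullet} \cdot (\,\cdot\,)$ inside the integrand, each of \eqref{eqn:new identity 1}--\eqref{eqn:new identity 3} will fall out after a projection-formula manipulation.

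For \eqref{eqn:new identity 1}, relation \eqref{eqn:bv 3} gives $\Delta_{1\bullet} \cdot c_1 = c_1 c_\bullet$, whence $\gamma_1 c_1 = \int_\bullet \gamma_\bullet c_1 c_\bullet = c_1 \int_\bullet \gamma_\bullet c_\bullet$ by the projection formula. For \eqref{eqn:new identity 2}, relation \eqref{eqn:bv 4} gives $\Delta_{1\bullet} \cdot \alpha_1 = \alpha_1 c_\bullet + \alpha_\bullet c_1$, and integrating $\gamma_\bullet$ against this sum produces exactly the two terms on the right-hand side, again by pulling $\alpha_1, c_1$ out of $\int_\bullet$ via the projection formula.

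For \eqref{eqn:new identity 3}, the key observation is $\Delta_{1\bullet} \cdot \Delta_{1...k} = \Delta_{1...k\bullet}$, so that
\[
\gamma_1 \cdot \Delta_{1...k} \ = \ \int_\bullet \gamma_\bullet \cdot \Delta_{1...k\bullet}.
\]
I then apply the iterated Beauville--Voisin identity \eqref{eqn:bv 6} to $\Delta_{1...k\bullet}$ on $S^{k+1}$, splitting the sums according to whether $\bullet$ participates. This yields three packages: a piece of the form $c_\bullet \sum_{i<j \in \{1,...,k\}} \Delta_{ij} \prod_{\ell \neq i,j} c_\ell$, a piece $\sum_{i=1}^{k} \Delta_{i\bullet} \prod_{\ell \neq i} c_\ell$, and a correction involving only products of the $c_\ell$'s. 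Integrating $\gamma_\bullet$ against each package is routine: $\int_\bullet \gamma_\bullet \Delta_{i\bullet} = \gamma_i$, and the remaining terms factor out of $\int_\bullet$ as either $\int_\bullet \gamma_\bullet$ or $\int_\bullet \gamma_\bullet c_\bullet$. Finally, using \eqref{eqn:bv 6} once more to replace $\sum_{i<j} \Delta_{ij} \prod_{\ell \neq i,j} c_\ell$ by $\Delta_{1...k} + (k-2)\sum_i \prod_{\ell \neq i} c_\ell$, the terms proportional to $\sum_i \prod_{\ell \neq i} c_\ell$ collapse via the elementary cancellation $(k-2)-(k-1) = -1$ into the single $-\sum_i \prod_{\ell \neq i} c_\ell$ appearing inside the parenthesis of \eqref{eqn:new identity 3}.

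The only real obstacle is the bookkeeping in the third identity: one has to track carefully how the $(k-1)$ and $(k-2)$ coefficients interact when the big diagonal is re-introduced. Nothing beyond elementary algebra is required, and the argument is entirely uniform in $k$ and in the spectator factor $l$.
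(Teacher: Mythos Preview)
Your argument is correct. For \eqref{eqn:new identity 1} and \eqref{eqn:new identity 2} it is literally the paper's proof, only rephrased through the identity $\gamma_1 = \int_\bullet \gamma_\bullet \Delta_{1\bullet}$; the paper simply says ``multiply \eqref{eqn:bv 3}, \eqref{eqn:bv 4} by $\gamma_\bullet$ and integrate out $\bullet$'', which is the same move.

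For \eqref{eqn:new identity 3} your route differs from the paper's. The paper first extracts the case $k=2$ from the triple-diagonal relation \eqref{eqn:bv 5} and then runs an induction on $k$ by multiplying both sides by $\Delta_{k,k+1}$. You instead feed the full iterated identity \eqref{eqn:bv 6} for $\Delta_{1\ldots k\bullet}$ into the integrand in one shot, separate the terms according to whether the index $\bullet$ occurs, and then re-apply \eqref{eqn:bv 6} on $S^k$ to recombine $\sum_{i<j}\Delta_{ij}\prod_{\ell\neq i,j} c_\ell$ into $\Delta_{1\ldots k}$. This is cleaner: it is uniform in $k$, avoids the induction bookkeeping, and makes the cancellation $(k-2)-(k-1)=-1$ transparent. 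The paper's inductive version has the mild advantage of only ever invoking the basic relation \eqref{eqn:bv 5} rather than its iterate \eqref{eqn:bv 6}, but since \eqref{eqn:bv 6} is already recorded, your direct computation is preferable.
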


\begin{proof} Formulas \eqref{eqn:new identity 1} and \eqref{eqn:new identity 2} both follow by taking \eqref{eqn:bv 3} and \eqref{eqn:bv 4} in~$A^*(S \times S)$ (with the factors denoted by indices $1$ and $\bullet$), multiplying them by $\gamma_\bullet$, and then integrating out the factor $\bullet$. As for \eqref{eqn:new identity 3}, let us consider identity \eqref{eqn:bv 5} in $A^*(S \times S \times S)$ (with the factors denoted by indices $1$, $2$, and $\bullet$) and multiply it by~$\gamma_\bullet$. We obtain:
\begin{align*}
\Delta_{12\bullet}\gamma_\bullet 
& = \Delta_{12} c_\bullet \gamma_\bullet + \Delta_{1\bullet} c_2 \gamma_\bullet + \Delta_{2\bullet} c_1 \gamma_\bullet - c_1 c_2 \gamma_\bullet - (c_1+c_2)c_\bullet \gamma_\bullet \\
\Rightarrow \Delta_{12\bullet}\gamma_1 & = \Delta_{12} c_\bullet \gamma_\bullet + \Delta_{1\bullet} c_2 \gamma_1 + \Delta_{2\bullet} c_1 \gamma_2 - c_1 c_2 \gamma_\bullet - (c_1+c_2)c_\bullet \gamma_\bullet.
\end{align*}
If we integrate out the factor $\bullet$, we precisely obtain the $k=2$ case of~\eqref{eqn:new identity 3}. To prove the general case of \eqref{eqn:new identity 3} we proceed by induction on $k$: the induction step is obtained by multiplying both sides of \eqref{eqn:new identity 3} with $\Delta_{k,k+1}$, and then applying \eqref{eqn:bv 3},~\eqref{eqn:bv 4}, and the $k=2$ case of \eqref{eqn:new identity 3}.
\end{proof}

\subsection{} 
\label{sub:ooo}

Consider the Hilbert scheme $\Hilb_n$ of $n$ points on $S$ and the Chow rings:
$$
\Hilb = \bigsqcup_{n=0}^\infty  \Hilb_n, \quad A^*(\Hilb) = \bigoplus_{n=0}^\infty A^*(\Hilb_n) 
$$
always with rational coefficients. We will consider two types of elements of the Chow rings above. The first of these are defined by considering the universal subscheme:
$$
\CZ_n \subset \Hilb_n \times S.
$$
For any $k \in \BN$, consider the projections:
\[ \begin{tikzcd}
\Hilb_n & \Hilb_n \times S^k \ar[swap]{l}{\pi} \ar{r}{\rho} & S^k
\end{tikzcd} \]
and let $\CZ_n^{(i)} \subset \Hilb_n \times S^k$ denote the pull-back of $\CZ_n$ via the~$i$-th projection $S^k \rightarrow S$. 

\begin{defn}
\label{def:universal}
	
A \emph{universal class} is any element of $A^*(\Hilb_n)$ of the form:
\begin{equation}
\label{eqn:tautological}
\pi_{*} \Big[ P(...,\ch_{j}(\CO_{\CZ^{(i)}_n}),...)^{1 \leq i \leq k}_{j \in \BN} \Big]
\end{equation}
for all $k \in \BN$ and for all polynomials $P$ with coefficients pulled back from $A^*(S^k)$.

\end{defn}

In particular, by inserting diagonals if necessary,
the classes \eqref{eqn:tautological} where $P$ is a monomial are of the form:
\begin{equation}
\label{eqn:formula taut 1}
\univ_{d_1,...,d_k}(\Gamma) = \pi_{*} \Big[ \ch_{d_1}(\CO_{\CZ^{(1)}_n}) ... \ch_{d_k}(\CO_{\CZ^{(k)}_n}) \cdot \rho^*(\Gamma) \Big].
\end{equation}
The following theorem holds for every smooth quasi-projective surface (see \cite{N taut}), but we only prove it here in the case where $S$ is a K3 surface (the argument herein easily generalizes to any smooth projective surface using the results of \cite{GT}). 

\begin{thm}
\label{thm:taut}

Any class in $A^*(\eHilb_n)$ is universal, \emph{i.e.}, of the form \eqref{eqn:tautological}.

\end{thm}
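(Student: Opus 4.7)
The plan is to use the Nakajima--Grojnowski Heisenberg algebra action on $\bigoplus_{n \geq 0} A^*(\Hilb_n(S))$, whose Chow-theoretic lift was constructed in \cite{MN}. By Nakajima's basis theorem, $A^*(\Hilb_n(S))$ is spanned by monomials
\[ \mathfrak{p}_{-k_1}(\gamma_1) \cdots \mathfrak{p}_{-k_m}(\gamma_m) \cdot |0\rangle \]
with $\sum_i k_i = n$ and $\gamma_i \in A^*(S)$, where $|0\rangle \in A^*(\Hilb_0(S)) = \BQ$ is the vacuum and $\mathfrak{p}_{-k}(\gamma) \colon A^*(\Hilb_{n'}(S)) \to A^*(\Hilb_{n'+k}(S))$ is the Nakajima creation operator. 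It therefore suffices to express each such monomial as a universal class in the sense of Definition~\ref{def:universal}.

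I would then proceed by induction on the number $m$ of creation operators. The base case $m = 0$ is immediate, since the vacuum is the fundamental class. For the inductive step, given a universal class $\alpha = \univ_{d_1, \ldots, d_\ell}(\Gamma) \in A^*(\Hilb_{n-k}(S))$, the goal is to show that $\mathfrak{p}_{-k}(\gamma) \cdot \alpha$ is again universal. I realize $\mathfrak{p}_{-k}(\gamma)$ through the nested-Hilbert-scheme correspondence, equipped with its natural projections to $\Hilb_{n-k}$ and to $\Hilb_n \times S$. The projection formula, combined with a Grothendieck--Riemann--Roch computation of the pullback of $\ch_j(\CO_{\CZ_n^{(i)}})$ along the nested projections, then rewrites $\mathfrak{p}_{-k}(\gamma) \cdot \alpha$ as the push-forward from $\Hilb_n \times S^{\ell+k}$ of a polynomial in the $\ch_j(\CO_{\CZ_n^{(i)}})$ with coefficients pulled back from $S^{\ell+k}$, i.e., a universal class of the form \eqref{eqn:tautological}.

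The main obstacle is the combinatorial book-keeping in the inductive step: after the substitution, the resulting cycle contains various diagonals between the newly introduced $k$ factors of $S$ and the pre-existing ones, as well as residual-subscheme contributions that do not manifestly fit the form of \eqref{eqn:tautological}. This is where the K3 hypothesis enters crucially: the Beauville--Voisin identities of Proposition~\ref{formulas}, and in particular~\eqref{eqn:new identity 3}, allow one to absorb any expression of the shape ``diagonal times product of $c$-classes'' into a polynomial with coefficients in $A^*(S^{\ell+k})$. These substitutions are what replace the more elaborate surface-theoretic input of \cite{GT} used in the argument of \cite{N taut} for arbitrary smooth quasi-projective surfaces.
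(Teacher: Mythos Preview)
Your strategy is genuinely different from the paper's. The paper does not use the Nakajima basis at all: it quotes Markman's theorem \cite{M} (with \cite{GT} for general surfaces) that the diagonal $\Delta_{\Hilb_n}$ itself can be written as a push-forward of a polynomial in the $\ch_j(\CO_{\CZ_n^{(i)}})$ and $\ch_j(\CO_{\widetilde{\CZ}_n^{(i)}})$ with coefficients from $A^*(S^k)$. Applying this identity-as-correspondence to an arbitrary class immediately gives universality, in one step and with no induction. The K3 hypothesis enters only because Markman's result is stated for symplectic surfaces; the Beauville--Voisin relations of Proposition~\ref{formulas} play no role whatsoever in this proof.

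Your inductive route is in principle sound---it is exactly the combination of Theorem~\ref{thm:dcm} and formula~\eqref{eqn:connection}---but the inductive step as you have written it has a real gap. You propose to pull back $\ch_j(\CO_{\CZ_n^{(i)}})$ to the nested scheme $\Hilb_{n-k,n}$, relate it to the pullback of $\ch_j(\CO_{\CZ_{n-k}^{(i)}})$, and then push forward along $p_+$ to $\Hilb_n$. The problem is the last push-forward: $p_+$ is a proper map of positive relative dimension from a scheme that is \emph{singular} for $k>1$, so Grothendieck--Riemann--Roch does not apply, and there is no evident reason why $p_{+*}$ of a polynomial in the $\ch_j(\CO_{\CZ_{n}^{(i)}})$ (times classes from $S$) should again be such a polynomial. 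This is precisely the nontrivial content of \cite{N taut}, which uses a different toolkit (the $W$-algebra relations among the operators $\fG_d$ and $\fq_k$) rather than a direct GRR argument on the nested scheme. Your invocation of the Beauville--Voisin identities \eqref{eqn:new identity 3} does not touch this difficulty: those identities manipulate classes on $S^k$, but the obstruction lives in the geometry of $\Hilb_{n-k,n} \to \Hilb_n$, not in the combinatorics of diagonals on the surface side.
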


\begin{proof} Consider the product $\Hilb_n \times S^k \times \Hilb_n$, and we will write $\pi_1$, $\pi_2$, $\pi_3$, $\pi_{12}$, $\pi_{23}$, and $\pi_{13}$ for the various projections to its factors. As a consequence of \cite{M} (see also \cite{GT}), the diagonal $\Delta_{\Hilb_n} \subset \Hilb_n \times \Hilb_n$ can be written as follows: 
$$
\Delta_{\Hilb_n} = \pi_{13*} \left[\sum_a \pi_2^*(\gamma_a) \prod_{(i,j)} \ch_j \left( \CO_{\CZ_n^{(i)}} \right) \prod_{(\widetilde{i}, \widetilde{j})} \ch_{\widetilde{j}} \left( \CO_{\widetilde{\CZ}_n^{(\widetilde{i})}} \right) \right]
$$
for suitably chosen $k \in \BN$, where we do not care much about the specific coefficients~$\gamma_a$ and indices $i,j,\widetilde{i}, \widetilde{j}$ which appear in the sum above (we write $\CZ_n$ and~$\widetilde{\CZ}_n$ for the universal subschemes in $\Hilb_n \times S \times \Hilb_n$ corresponding to the first and second copies of $\Hilb_n$, respectively). Since the diagonal corresponds to the identity operator, the equality above implies that:
\begin{align}
\text{Id}_{\Hilb_n} & = \pi_{1*} \left[\sum_a \pi_2^*(\gamma_a) \prod_{(i,j)} \ch_j \left( \CO_{\CZ_n^{(i)}} \right) \prod_{(\widetilde{i}, \widetilde{j})} \ch_{\widetilde{j}} \left( \CO_{\widetilde{\CZ}_n^{(\widetilde{i})}} \right) \pi_3^* \right] \\
\label{eqn:last}
& = \sum_a \pi_* \left[ \prod_{(i,j)} \ch_j \left( \CO_{\CZ_n^{(i)}} \right) \rho^*\left( \gamma_a \cdot \rho_* \left( \prod_{(\widetilde{i}, \widetilde{j})} \ch_{\widetilde{j}} \left( \CO_{\widetilde{\CZ}_n^{(\widetilde{i})}} \right)  \cdot \pi^* \right) \right) \right]
\end{align}
hence the universality.
\end{proof}

Formula \eqref{eqn:last} implies the surjectivity of the homomorphism:
\begin{equation}
\label{eqn:surj}
\begin{gathered}
\bigoplus_a A^*(S^k) \twoheadrightarrow A^*(\Hilb_n) \\
\sum_a \Gamma_a \mapsto \sum_a \pi_* \left[ \prod_{(i,j)} \ch_j \left( \CO_{\CZ_n^{(i)}} \right) \rho^*(\Gamma_a) \right]
\end{gathered}
\end{equation}
where the sums over $a$ are in one-to-one correspondence with the sums in \eqref{eqn:last}. 

\subsection{}
\label{sub:nakajima}

Let us present another important source of elements of $A^*(\Hilb_n)$, based on the following construction independently due to Grojnowski \cite{Groj} and Nakajima~\cite{Nak} (in the present paper, we will mostly use the presentation by Nakajima). For any~$n,k \in \BN$, consider the closed subscheme:
$$
\Hilb_{n,n+k} = \Big\{(I \supset I') \,|\, I/I' \text{ is supported at a single }x \in S \Big\} \subset \Hilb_n \times \Hilb_{n+k}
$$
endowed with projection maps:
\begin{equation}
\label{eqn:diagram zk}
\xymatrix{& \Hilb_{n,n+k} \ar[ld]_{p_-} \ar[d]^{p_S} \ar[rd]^{p_+} & \\ \Hilb_{n} & S & \Hilb_{n+k}}
\end{equation}
that remember $I$, $x$, $I'$, respectively. One may use $\Hilb_{n,n+k}$ as a correspondence:
\begin{equation}
\label{eqn:nakajima}
A^*(\Hilb_n) \xrightarrow{\fq_{\pm k}} A^*(\Hilb_{n \pm k} \times S)
\end{equation}
given by:
\begin{equation}
\label{eqn:nak def}
\fq_{\pm k} = (\pm 1)^{k} \cdot (p_\pm \times p_S)_* \circ p_\mp^*.
\end{equation}
Because the correspondences above are defined for all $n$, it makes sense to set:
$$
A^*(\Hilb) \xrightarrow{\fq_{\pm k}} A^*(\Hilb \times S).
$$
We also set $\fq_0 = 0$. The main result of \cite{Nak} (although \emph{loc.~cit.}~is written at the level of cohomology, the result holds at the level of Chow groups; see for example \cite[Remark 8.15 (2)]{Nak lectures}) is that the operators $\fq_k$ obey the commutation relations in the Heisenberg algebra, namely: 
\begin{equation}
\label{eqn:heis}
[\fq_k, \fq_l] = k \delta_{k+l}^0 \left( \text{Id}_{\Hilb} \times \Delta \right)
\end{equation}
as correspondences $A^*(\Hilb) \rightarrow A^*(\Hilb \times S^2)$. In terms of self-correspondences $A^*(\Hilb) \rightarrow A^*(\Hilb)$, the identity \eqref{eqn:heis} reads, for all $\alpha, \beta \in A^*(S)$:
\begin{equation}
\label{eqn:heis op}
[\fq_k(\alpha), \fq_l(\beta)] 
=  k ( \alpha, \beta ) \Id_\Hilb. 
\end{equation}

\subsection{} More generally, we may consider:
\begin{equation}
\label{eqn:composition 1}
\fq_{n_1}...\fq_{n_t} : A^*(\Hilb) \rightarrow A^*(\Hilb \times S^t)
\end{equation}
where the convention is that the operator $\fq_{n_i}$ acts in the $i$-th factor of $S^t = S \times ... \times S$. Then associated to any $\Gamma \in A^*(S^t)$, one obtains an endomorphism of $A^*(\Hilb)$:
\begin{equation}
\label{eqn:composition 2}
\fq_{n_1}...\fq_{n_t}(\Gamma) = \pi_{*} (\rho^*(\Gamma) \cdot \fq_{n_1}...\fq_{n_t})
\end{equation}
where $\pi$ and $\rho$ denote the projections of $\Hilb \times S^t$ to the factors.

\begin{thm}[\cite{dCM}]
\label{thm:dcm}

We have a decomposition:
\begin{equation}
\label{eqn:decomp}
A^*(\eHilb) = \bigoplus_{\substack{n_1 \geq ... \geq n_t \in \BN \\ \Gamma \in A^*(S^t)^{\emph{sym}}}}
\fq_{n_1}... \fq_{n_t}(\Gamma) \cdot v
\end{equation}
where ``\emph{sym}" refers to the part of $A^*(S^t)$ which is symmetric with respect to those transpositions $(ij) \in \mathfrak{S}_t$ for which $n_i = n_j$, and $v$ is a generator of $A^*(\eHilb_0) \cong \BQ$. 
\end{thm}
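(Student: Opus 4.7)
The strategy is to establish the two facts underlying \eqref{eqn:decomp}: that the listed monomials span $A^*(\Hilb)$, and that the sum is direct. Both rest on the Heisenberg commutation relations \eqref{eqn:heis op} together with the vanishing $\fq_{-k}(v) = 0$ for all $k > 0$, which holds because $A^*(\Hilb_{-k}) = 0$. For brevity, for a partition $\lambda = (n_1 \geq \ldots \geq n_t)$ I abbreviate $\fq_\lambda := \fq_{n_1} \cdots \fq_{n_t}$ and $\fq_{-\lambda} := \fq_{-n_1} \cdots \fq_{-n_t}$.

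For spanning, I would start from Theorem \ref{thm:taut}: every class in $A^*(\Hilb_n)$ is universal of the form \eqref{eqn:formula taut 1}. The Lehn-type formulas of \cite{Lehn, LQW}, lifted to the Chow ring by \cite{MN}, express each Chern character $\ch_j(\CO_{\CZ_n^{(i)}})$ as a polynomial in Nakajima operators $\fq_k$ with diagonal insertions from $S$. Substituting these expressions into a universal class expands it into a sum of monomials $\fq_{k_1} \cdots \fq_{k_t}(\Gamma') \cdot v$. Using \eqref{eqn:heis op} we may commute every annihilation operator $\fq_{-k}$ to the right: it either cancels against a creator (producing a diagonal class on $S$ that can be absorbed into the coefficient) or is pushed onto $v$ and vanishes. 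Hence only monomials with all $k_i > 0$ survive. Since $[\fq_k, \fq_l] = 0$ for $k, l > 0$, we may freely reorder the surviving creation operators to achieve $n_1 \geq \ldots \geq n_t$; swapping two equal-index operators $\fq_{n_i}$ and $\fq_{n_j}$ permutes the corresponding factors of $S^t$ in $\Gamma'$, so only the part of $\Gamma'$ symmetric under the stabilizer of $(n_1,\ldots,n_t)$ in $\mathfrak{S}_t$ contributes.

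For directness, suppose a finite sum $\sum_{\lambda \vdash n} \fq_\lambda(\Gamma_\lambda) \cdot v$ (with $\Gamma_\lambda \in A^*(S^{t(\lambda)})^{\text{sym}}$) vanishes in $A^*(\Hilb_n)$. Fix a partition $\mu = (m_1 \geq \ldots \geq m_s)$ of $n$ and apply $\fq_{-\mu}$, viewed as a map $A^*(\Hilb_n) \to A^*(\Hilb_0 \times S^s) = A^*(S^s)$, to the relation. Normal-ordering $\fq_{-\mu} \fq_\lambda$ via \eqref{eqn:heis op} produces a sum over complete pairings of the annihilators against the creators, since any unpaired annihilator is pushed onto $v$ and dies. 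A pairing contributes only when it matches each $\fq_{-m_i}$ with some $\fq_{n_j}$ satisfying $m_i = n_j$, so only the summand $\lambda = \mu$ (as multisets) survives. Summing over all such matchings gives
\[
\fq_{-\mu} \fq_\mu(\Gamma_\mu) \cdot v \;=\; c_\mu \cdot \mathrm{Sym}_{\mathrm{Stab}(\mu)}(\Gamma_\mu) \;\in\; A^*(S^s)
\]
for an explicit nonzero rational scalar $c_\mu$, which comes from the product of commutator coefficients $\pm m_i$ and the number of matchings. Since $\Gamma_\mu$ already lies in $A^*(S^s)^{\text{sym}}$, the right-hand side equals $c_\mu \Gamma_\mu$, forcing $\Gamma_\mu = 0$. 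Letting $\mu$ range over all partitions of $n$ yields directness.

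The main technical input is the Chow-level Lehn-type expansion of $\ch_j(\CO_{\CZ_n^{(i)}})$ as a polynomial in Nakajima monomials: this is classical in cohomology via \cite{Lehn, LQW}, but its extension to correspondences on Chow groups is precisely what is supplied by \cite{MN}. Once that ingredient is in hand, the spanning reduction and the Wick-type normal-ordering used for directness are purely combinatorial, and the argument proceeds exactly as in the original cohomological proof of de Cataldo--Migliorini \cite{dCM}.
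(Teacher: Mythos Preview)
Your argument is correct, but it follows a different route from the paper's. The paper proceeds geometrically: it first shows that the Nakajima correspondence $\fq_{\lambda_1}\cdots\fq_{\lambda_k}$ equals, as a cycle on $\Hilb_n \times S^{l(\lambda)}$, the reduced fibre product $\Gamma_\lambda = (\Hilb_n \times_{S^{(n)}} S^\lambda)_{\mathrm{red}}$ used by de~Cataldo--Migliorini (by comparing supports, dimensions, and multiplicities over the generic locus), and then simply invokes the diagonal decomposition $\Delta_{\Hilb_n} = \sum_\lambda c_\lambda\, {}^t\Gamma_\lambda \circ \Gamma_\lambda$ from \cite{dCM}. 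This yields the result at the level of correspondences in one stroke, and the identification $\Gamma_\lambda = \fq_\lambda$ is reused later in the paper (it is the source of the diagonal formula \eqref{diagonal} on which the construction of the projectors $P_i$ rests). Your approach is instead Fock-theoretic: spanning via Theorem~\ref{thm:taut} together with the \cite{MN} expression of $\fG_d$ through $\fJ_0^{d-1}$, and directness via a Wick-type pairing argument using \eqref{eqn:heis op}. This is closer in spirit to Nakajima's original cohomological argument than to \cite{dCM}; your closing remark that it ``proceeds exactly as in the original cohomological proof of de~Cataldo--Migliorini'' is therefore inaccurate, since their proof \emph{is} the geometric diagonal decomposition the paper quotes. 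Your route has the virtue of staying entirely within the Heisenberg calculus, at the cost of importing the heavier \cite{MN} machinery for spanning and of not supplying the correspondence-level identity \eqref{nak=dCM} that the rest of the paper needs.
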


\begin{proof}
Since we will need it later, we recall the precise relationship between Nakajima operators
and the correspondences studied in \cite{dCM}.
Let $\lambda$ be a partition of $n$ with $k$ parts, let $S^{\lambda} = S^k$ and
let $S^{\lambda} \to S^{(n)}$ be the map that sends $(x_1, ..., x_k)$ to the cycle $\lambda_1 x_1 + ... + \lambda_k x_k$ in
the $n$-th symmetric product of the surface $S$. We consider the correspondence:
\begin{align*}
\Gamma_{\lambda} 
& = (\Hilb_n \times_{S^{(n)}} S^{\lambda})_{\text{red}} \\
& = \{ (I,x_1, ..., x_{k}) \,|\, \sigma(I) = \lambda_1 x_1 + ... + \lambda_{k} x_{k} \} 
\end{align*} 
where 
$\sigma : S^{[n]} \to S^{(n)}$ is the Hilbert--Chow morphism
sending the subscheme $I$ to its underlying support.
The subscheme $\Gamma_{\lambda}$ is irreducible of dimension $n+k$
and the locus $\Gamma_{\lambda}^{\text{reg}} \subset \Gamma_{\lambda}$,
where the points $x_i$ are distinct, is open and dense; see \cite[Remark~2.0.1]{dCM}.
Similarly, the Nakajima correspondence $\Fq_{\lambda_1} ... \Fq_{\lambda_k}$ is a cycle in~$\Hilb_n \times S^k$ of dimension $n+k$
supported on a subscheme that contains $\Gamma_{\lambda}^{\text{reg}}$ as an open subset and
whose complement is of smaller dimension \cite[4 (i)]{Nak}.
Moreover the multiplicity of the cycle on $\Gamma_{\lambda}^{\text{reg}}$ is $1$.
Hence we have the equality of correspondences:
\begin{equation} \Gamma_{\lambda} = \Fq_{\lambda_1} ... \Fq_{\lambda_k} \in A^{\ast}(\Hilb_n \times S^{\lambda}). \label{nak=dCM} \end{equation}
The result follows now from \cite[Proposition 6.1.5]{dCM}, which says that:
\begin{equation} \Delta_{\Hilb_n} = \sum_{\lambda \vdash n}  \frac{(-1)^{n-l(\lambda)}}{|\Aut(\lambda)|\prod_i \lambda_i} {^t\Gamma_{\lambda}} \circ \Gamma_{\lambda} \label{diagonal_eqn} 
\end{equation}
where $\lambda$ runs over all partitions of size $n$, and we let $l(\lambda)$ and $\lambda_i$ denote the length and the parts of $\lambda$, respectively.
\end{proof}

\begin{rmk} As shown in \cite{N taut}, there is an explicit way to go between the descriptions~\eqref{eqn:tautological} and \eqref{eqn:decomp} of $A^*(\Hilb)$. Concretely, for all $n_1 \geq ... \geq n_t$ there exists a polynomial $P_{n_1,...,n_t}$ with coefficients in $\rho^*(A^*(S^t))$ such that for all $\Gamma \in A^{\ast}(S^t)$:
\begin{equation}
\label{eqn:connection}
\fq_{n_1}...\fq_{n_t}(\Gamma) = \pi_* \Big[ P_{n_1,...,n_t}(...,\ch_{j}(\CO_{\CZ^{(i)}}),...)^{1 \leq i \leq t}_{j \in \BN} \cdot  \rho^*(\Gamma) \Big].
\end{equation}
Moreover, \emph{loc.~cit.}~gives an algorithm for computing the polynomial $P_{n_1,...,n_t}$. 

\end{rmk}

\subsection{} 
\label{sub:chern}
Two interesting collections of elements of $A^*(\Hilb_n)$ can be written as universal classes: divisors and Chern classes of the tangent bundle. 
One has:
$$
A^1(S) \oplus \BQ \cdot \delta \cong A^1(\Hilb_n)
$$
(with the convention that $\delta = 0$ if $n=1$) where the isomorphism is given by:
\begin{align}
\label{eqn:div 1}
l \in A^1(S) & \ \mapsto \  \univ_2(l) \\
\label{eqn:div 2}
\delta & \ \mapsto \  \univ_3(1).
\end{align}
Similarly, the Chern character of the tangent bundle to $\Hilb_n$ is given by the well-known formula (see for example \cite[Proposition 2.10]{MN}):
$$
\ch(\Tan_{\Hilb_n}) = \pi_* \left[ \left(\ch \left(\CO_{\CZ_n} \right) + \ch \left(\CO_{\CZ_n} \right)' - \ch \left(\CO_{\CZ_n} \right) \ch \left(\CO_{\CZ_n} \right)' \right) \rho^*(1+2c)\right]
$$
where $( \text{ })'$ is the operator which multiplies a degree $d$ class by $(-1)^d$. Therefore, the Chern character of the tangent bundle is a linear combination of the following particular universal classes:
\begin{equation}
\label{eqn:tan}
\univ_d(\gamma) \quad \text{ and } \quad \univ_{d,d'}(\Delta_*(\gamma))
\end{equation}
where $\gamma \in \{1,c\}$, and $d,d'$ are various natural numbers.

\section{Motivic decompositions} 
\subsection{} \label{sec:LLV_algebra}
Let us recall the Lie algebra action $\fg_{\mathrm{NS}} \curvearrowright A^*(\Hilb_n)$ from \cite{Ob},
which lifts the classical construction of \cite{LL, V} in cohomology.
To this end, consider the Beauville--Bogomolov form, which is the pairing on:
$$
V = A^1(\Hilb_n) \cong A^1(S) \oplus \BQ \cdot \delta
$$
This form extends the intersection form on $A^1(S)$ and satisfies:
$$
(\delta, \delta) = 2 - 2n, \quad (\delta, A^1(S)) = 0.
$$
Let $U=(\begin{smallmatrix} 0& 1 \\1 &0\end{smallmatrix})$ be the hyperbolic lattice with fixed symplectic basis $e,f$.
We have:
\[ \Fg_{\mathrm{NS}} = \wedge^2 ( V \overset{\perp}{\oplus} U_{\BQ} ) \]
where the Lie bracket is defined for all $a,b,c,d \in V \oplus U_\BQ$ by:
\[ [a \wedge b, c \wedge d] = (a,d) b \wedge c - (a,c) b \wedge d - (b,d) a \wedge c + (b,c) a \wedge d. \]

Consider for all $\alpha \in A^1(S)$ the following operators:
\begin{gather}
e_{\alpha} = -\sum_{n > 0} \Fq_{n} \Fq_{-n} ( \Delta_{\ast} \alpha) \nonumber \\
\label{LLV_operators}
\begin{gathered}
e_{\delta} = -\frac{1}{6} \sum_{i+j+k=0} :\! \Fq_i \Fq_j \Fq_k ( \Delta_{123} )\!: \\
\widetilde{f}_{\alpha} = -\sum_{n > 0} \frac{1}{n^2} \Fq_{n} \Fq_{-n}( \alpha_1 + \alpha_2 )
\end{gathered} \\
\nonumber
\widetilde{f}_{\delta}
= -\frac{1}{6} \sum_{i+j+k=0} :\!\Fq_i \Fq_j \Fq_k \left( \frac{1}{k^2}  \Delta_{12} + \frac{1}{j^2} \Delta_{13} + \frac{1}{i^2} \Delta_{23} + \frac{2}{j k} c_1 + \frac{2}{i k} c_2 + \frac{2}{i j} c_3 \right)\!: \,.
\end{gather}
Here $:\! - \!:$ is the normal ordered product defined by:
\begin{equation}
\label{eqn:normal}
:\! \Fq_{i_1} ... \Fq_{i_k}\!: \,\, =  \Fq_{i_{\sigma(1)}} ... \Fq_{i_{\sigma(k)}}
\end{equation}
where $\sigma$ is any permutation such that $i_{\sigma(1)} \geq ... \geq i_{\sigma(k)}$. We define operators $e_{\alpha}$ and~$\widetilde{f}_\alpha$ for general $\alpha \in A^1(\Hilb_n)$ by linearity in $\alpha$.
By Theorem 1.6 of \cite{MN}, we have that $e_\alpha$ is the operator of cup product with $\alpha$.
If $(\alpha,\alpha) \neq 0$, the multiple~$\widetilde{f}_\alpha / (\alpha,\alpha)$ acts on cohomology as the Lefschetz dual of $e_\alpha$. In \cite{Ob}, it was shown that the assignment:
\begin{equation}
\label{eqn:action}
\begin{gathered}
\act : \Fg_{\mathrm{NS}} \rightarrow A^*(\Hilb_n \times \Hilb_n) \\
\act(e \wedge \alpha) = e_\alpha, \quad \act(\alpha \wedge f) = \widetilde{f}_\alpha
\end{gathered}
\end{equation}
for all $\alpha \in V$, induces a Lie algebra homomorphism. In particular, it was shown in \emph{loc.~cit.}~that the element~\mbox{$e \wedge f \in \gNS$} acts by:
\begin{equation} 
\label{eqn:def h} 
h = \sum_{k > 0} \frac{1}{k} \Fq_{k} \Fq_{-k}( c_2 - c_1 ).
\end{equation}
The operator $h$ specializes in cohomology to the Lefschetz grading operator, which by our normalization acts on $H^{2i}(\Hilb_n)$ by multiplication by $i-n$. Similarly, for any $\alpha, \beta \in A^1(S) \subset A^1(\Hilb_n)$, the element $\alpha \wedge \beta \in \gNS$ acts by:
\begin{equation} 
\label{eqn:def h ab} 
h_{\alpha \beta} = \sum_{k=1}^\infty \frac{1}{k} \fq_k \fq_{-k}(\alpha_2 \beta_1 - \alpha_1 \beta_2)
\end{equation}
and the element $\alpha \wedge \delta \in \gNS$ acts by:
\begin{equation} 
\label{eqn:def h ad} 
h_{\alpha \delta} = - \frac{1}{2} \sum_{i+j+k = 0, i,j,k \in \BZ} \frac{1}{k} :\!\fq_i \fq_j \fq_k(\Delta_{12}(\alpha_1+\alpha_3))\!:\,.
\end{equation}

\subsection{Proof of Theorem~\ref{thm:Decomposition}}
We start with the decomposition of the diagonal into Nakajima operators:
\begin{equation} \label{diagonal} \Delta_{\Hilb_n} = 
\sum_{\lambda \vdash n} \frac{(-1)^{l(\lambda)}}{\Fz(\lambda)} 
\Fq_{\lambda} \Fq_{-\lambda}(\Delta)
\end{equation} 
where $\lambda$ runs over all partitions of $n$,
\[ \Fz(\lambda) = |\mathrm{Aut}(\lambda)| \prod_{i} \lambda_i \]
is a combinatorial factor, and for any $\pi \in A^{\ast}(S \times S)$ we write:
\begin{align*} \Fq_{\lambda} \Fq_{-\lambda}(\pi) 
& =  \Fq_{\lambda_1} ... \Fq_{\lambda_{l(\lambda)}} \Fq_{-\lambda_1} ... \Fq_{-\lambda_{l(\lambda)}} \left( \pi_{1,l(\lambda)+1} \pi_{2,l(\lambda)+2} ... \pi_{l(\lambda),2l(\lambda)} \right)  \\
& =\,\,  : \! \Fq_{\lambda_1} \Fq_{-\lambda_1}(\pi) ... \Fq_{\lambda_{l(\lambda)}} \Fq_{-\lambda_{l(\lambda)}}(\pi)\! : \,.
\end{align*}
The formula \eqref{diagonal} follows directly from \eqref{nak=dCM}, \eqref{diagonal_eqn}, and the fact that ${^t\Fq_m} = (-1)^m \Fq_{-m}$ (which is incorporated in the definition \eqref{eqn:nak def}).

Consider the decomposition of the diagonal of $S$ as:
\begin{equation} \Delta = \pi_{-1} + \pi_0 + \pi_{1} \label{4234} \end{equation}
where:
\[ \pi_{-1} = c_1, \quad \pi_0 = \Delta - c_1 - c_2, \quad \pi_{1} = c_2. \]
It is easy to note that $\pi_{-1}$, $\pi_0$, $\pi_1$ are the projectors onto the $-1, 0, +1$ eigenspaces of the action of $h$ on $A^{\ast}(\Hilb_1) = A^{\ast}(S)$. 

To define projectors corresponding to the action of $h$ on $A^*(\Hilb_n)$, we insert the decomposition \eqref{4234} into \eqref{diagonal}, and then expand and collect the terms of degree~$i$. Concretely, for every integer $i$, we let:
\[
P_{i} = 
\sum^{\lambda, \mu, \nu}_{\substack{|\lambda| + |\mu| + |\nu| = n \\ -l(\lambda) + l(\nu) = i}}
\frac{(-1)^{l(\lambda) + l(\mu) + l(\nu)}}{\Fz(\lambda) \Fz(\mu) \Fz(\nu)}
: \! \Fq_{\lambda} \Fq_{-\lambda}({^{t}\pi_{-1}}) \Fq_{\mu} \Fq_{-\mu}({^{t}\pi_0}) \Fq_{\nu} \Fq_{-\nu}({^{t}\pi_{1}}) \! : \,.
\]
In particular, we have $P_i = 0$ unless $i \in \{-n, ..., n\}$. Let us check that $P_i$ are indeed projectors onto the eigenspaces of $h$.
\begin{claim} For all $i,j \in \{-n, ..., n\}$ we have the following equalities in $A^{\ast}(\eHilb_n \times \eHilb_n)$:
\begin{enumerate}
\item[(a)] $P_{i} \circ P_{j} = P_{i} \delta^i_{j}$
\item[(b)] $h \circ P_i = i P_i$.
\end{enumerate}
\end{claim}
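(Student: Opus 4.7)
My plan is to use the Heisenberg machinery to reduce (a) and (b) to a combinatorial calculation on the Nakajima basis of $A^{\ast}(\Hilb_n)$. The first step is a commutativity lemma: for all $l, l' > 0$ and $j, j' \in \{-1, 0, 1\}$,
\[
[\Fq_l \Fq_{-l}({}^t\pi_j), \Fq_{l'} \Fq_{-l'}({}^t\pi_{j'})] = 0
\]
as correspondences. Expanding via Leibniz and applying \eqref{eqn:heis op} forces $l = l'$; the surviving terms involve cross-pairings $(\alpha, \beta)$ of the inner/outer factors of the $\pi_j$'s. Using $\pi_{-1} = c \otimes 1$, $\pi_1 = 1 \otimes c$, and $\pi_0 = \sum_\beta e_\beta \otimes e^\beta$ (the K\"unneth decomposition of the diagonal of $S$, with $\{e_\beta\}$ and $\{e^\beta\}$ dual bases of $A^1(S)$), a short case check shows the two Leibniz contributions always cancel. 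Consequently the three ``color factors'' of $P_i$ commute pairwise, and likewise $h = \sum_{l > 0} \frac{1}{l}\Fq_l \Fq_{-l}(\pi_1 - \pi_{-1})$ commutes with each $P_i$. Substituting $\Delta = \pi_{-1} + \pi_0 + \pi_1$ into \eqref{diagonal} and collecting by color immediately gives $\sum_i P_i = \Delta_{\Hilb_n}$, so it is enough to verify that $P_i$ acts as the projector onto the weight-$i$ eigenspace of $h$.

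I would check this by testing against the Nakajima basis $\xi = \prod_s \Fq_{k_s}(\alpha_s) v$ of $A^{\ast}(\Hilb_n)$, where $\alpha_s$ runs over a chosen homogeneous basis $\{1\} \cup \{e_\beta\} \cup \{c\}$ of $A^{\ast}(S)$ and $\sum_s k_s = n$. From \eqref{eqn:def h} and the elementary commutators $[h, \Fq_k(\alpha)] = (1,\alpha)\Fq_k(c) - (c,\alpha)\Fq_k(1)$, such a class is an $h$-eigenvector with eigenvalue $\mathrm{wt}(\xi) = \#\{s : \alpha_s = c\} - \#\{s : \alpha_s = 1\}$. Since ${}^t\pi_{-1} = \pi_1$ and ${}^t\pi_1 = \pi_{-1}$, the annihilation parts of the three color factors of $P_i$ contract only with the $\Fq_k(1)$-, $\Fq_k(e_\beta)$- and $\Fq_k(c)$-factors of $\xi$ respectively. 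The constraint $|\lambda| + |\mu| + |\nu| = n$ then forces each color selection to exhaust all factors of that type; in particular $l(\lambda) = \#\{s : \alpha_s = 1\}$ and $l(\nu) = \#\{s : \alpha_s = c\}$, so $-l(\lambda) + l(\nu) = \mathrm{wt}(\xi)$ automatically. Hence $P_i \xi = 0$ whenever $i \neq \mathrm{wt}(\xi)$, and for $i = \mathrm{wt}(\xi)$ there is a unique matching triple $(\lambda, \mu, \nu)$.

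For that triple, one collects three factors per color: the number of size-preserving orderings $\prod_l a_l!$ of the factors of $\xi$ being matched (with an additional dual-basis sum for the divisor color, pinned down by $(e^\beta, e_\gamma) = \delta^\beta_\gamma$), the Heisenberg commutator factors $\prod_i(-\lambda_i)\prod_j(-\mu_j)\prod_k(-\nu_k)$, and the overall coefficient $(-1)^{l(\lambda)+l(\mu)+l(\nu)}/(\Fz(\lambda)\Fz(\mu)\Fz(\nu))$. Writing $\Fz(\lambda) = \prod_l l^{a_l} a_l!$ (and similarly for $\mu, \nu$), these pieces telescope to exactly $1$, giving $P_{\mathrm{wt}(\xi)}\xi = \xi$. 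Identities (a) and (b) now follow as operator identities, and since every step uses only the correspondence-level identities \eqref{eqn:heis}, \eqref{diagonal} and the normal-ordering convention \eqref{eqn:normal}, they lift to equalities in $A^{\ast}(\Hilb_n \times \Hilb_n)$. The main obstacle will be the bookkeeping in this final combinatorial collapse, especially handling the divisor color where the dual-basis sum must be correctly interleaved with the specific divisor basis elements appearing in $\xi$.
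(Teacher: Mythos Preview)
Your proposal has a fundamental confusion between Chow groups and cohomology that breaks the argument in two places.

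First, you write $\pi_0 = \sum_\beta e_\beta \otimes e^\beta$ and speak of a ``homogeneous basis $\{1\} \cup \{e_\beta\} \cup \{c\}$ of $A^{\ast}(S)$''. Neither holds in Chow: for a K3 surface $A^2(S)$ is infinite-dimensional, and $\pi_0 = \Delta - c_1 - c_2$ is not a sum of decomposable classes (the diagonal of $S$ does not admit a K\"unneth decomposition in $A^{\ast}(S\times S)$). The eigenspace description is accordingly wrong: the $0$-eigenspace of $h$ on $A^{\ast}(S)$ is $A^1(S)\oplus A^2(S)_{\hom}$, not just $A^1(S)$, and the $(+1)$-eigenspace is $\BQ\cdot c$, not all of $A^2(S)$. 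So your proposed Nakajima ``basis'' misses most of $A^{\ast}(\Hilb_n)$, and the contraction analysis (``the annihilation parts of the three color factors contract only with the $\Fq_k(1)$-, $\Fq_k(e_\beta)$- and $\Fq_k(c)$-factors'') is not available. Your commutativity lemma happens to be true, but for a different reason: it follows directly from $\pi_r\circ\pi_s=\delta_{rs}\pi_r$ in $A^{\ast}(S\times S)$, which the Beauville--Voisin relations give without any K\"unneth splitting.

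Second, even after repairing the eigenspace picture, your closing sentence does not go through. Testing on classes $\xi\in A^{\ast}(\Hilb_n)$ only yields equalities of operators; the map $A^{\ast}(\Hilb_n\times\Hilb_n)\to\End(A^{\ast}(\Hilb_n))$ is not injective (already for $S$ itself, $\alpha\times\beta$ with $\alpha,\beta\in A^2(S)_{\hom}$ acts as zero but need not vanish). Saying ``every step uses only correspondence-level identities'' does not help once you have specialized to particular insertions $\alpha_s$. The paper avoids this by computing $P_i\circ P_j$ directly as a composition of correspondences: one commutes all negative Nakajima factors of $P_i$ to the right, uses that a product of annihilation operators of total degree $>n$ vanishes as a \emph{correspondence} on $\Hilb_n$ (the target $\Hilb_{n-\sum k_i}$ is empty), and then invokes the orthogonality $\pi_r\circ\pi_s=\delta_{rs}\pi_r$ to match surviving terms. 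Part (b) is handled similarly by commuting $h$ to the middle via $(h\times\Id)({}^t\pi_r)=r\,{}^t\pi_r$. If you want to salvage your route, compute $P_i\circ\Fq_{k_1}\cdots\Fq_{k_t}$ as a correspondence in $A^{\ast}(S^t\times\Hilb_n)$ \emph{without} specializing the insertions, and then close up using the de Cataldo--Migliorini decomposition \eqref{diagonal_eqn}; but at that point you are essentially rewriting the paper's argument.
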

\begin{proof}
(a) We determine $P_i \circ P_j$ by commuting all Nakajima operators with negative indices to the right, and then using that
we act on $\Hilb_n$ so all products of Nakajima operators with purely negative indices of degree $>n$ vanish.
Since every summand in $P_j$ contains such a product of degree $n$,
we find that for a term to contribute all operators with negative indices coming from $P_i$ have to interact with operators (with positive indices) from the second term.
The interactions are described as follows. For a single term (let $a,b > 0$ and $r,s \in \{ -1, 0, 1 \}$) we have:
\begin{multline*}
\Fq_{a} \Fq_{-a}(^t\pi_{r}) \Fq_{b} \Fq_{-b}(^t\pi_{s}) 
= \Fq_{a} [ \Fq_{-a} , \Fq_{b} ] \Fq_{-b}\left( (^t\pi_{r})_{12} (^t\pi_{s})_{34} \right) \\
+ \Fq_{a} \Fq_{b} \Fq_{-a} \Fq_{-b} ( (^t\pi_{r})_{13} (^t\pi_{s})_{24} )
\end{multline*}
where by the commutation relations \eqref{eqn:heis} the first term on the right is:
\begin{align*}
\Fq_{a} [ \Fq_{-a} , \Fq_{b} ] \Fq_{-b}\left( (^t\pi_{r})_{12} (^t\pi_{s})_{34} \right)
& = (-a) \delta_{ab} \Fq_a \Fq_{-b} \left( \pi_{14 \ast}( (^t\pi_{r})_{12} (^t\pi_{s})_{34} \Delta_{23} ) \right)\\
& = (-a) \delta_{ab} \Fq_a \Fq_{-a} \left( ^t\pi_{s} \circ {^t\pi_r} \right) \\
& = (-a) \delta_{ab} \Fq_a \Fq_{-a} \left( ^t(\pi_{r} \circ \pi_s) \right) \\
& = (-a) \delta_{ab} \delta_{rs} \Fq_a \Fq_{-a} (^t\pi_{r}).
\end{align*}
Hence for a composition:
\[
: \! \Fq_{\lambda} \Fq_{-\lambda}(^{t}\pi_{-1}) \Fq_{\mu} \Fq_{-\mu}(^{t}\pi_0) \Fq_{\nu} \Fq_{-\nu}(^{t}\pi_{1}) \! : 
\circ
: \! \Fq_{\lambda'} \Fq_{-\lambda'}(^{t}\pi_{-1}) \Fq_{\mu'} \Fq_{-\mu'}(^{t}\pi_0) \Fq_{\nu'} \Fq_{-\nu'}(^{t}\pi_{1}) \! :\\
\]
(with $\lambda, \mu, \nu$ as in the definition of $P_i$, and the same for the primed partitions)
to act non-trivially on $\Hilb_n$ we have to have $\lambda = \lambda'$, $\mu = \mu'$ and $\nu = \nu'$.
Moreover, if we write $\lambda$ multiplicatively as $(1^{l_1} 2^{l_2} ... )$
where $l_i$ is the number of parts of size $i$, then
there are precisely $|\Aut(\lambda)| = \prod_i l_i!$ different ways to pair the negative factors in
$\Fq_{\lambda} \Fq_{-\lambda}(^{t}\pi_{-1})$ with the positive factors $\Fq_{\lambda'} \Fq_{-\lambda'}(^{t}\pi_{-1})$,
and similarly for $\mu, \nu$.
Hence:
\begin{align*}
& : \! \Fq_{\lambda} \Fq_{-\lambda}(^{t}\pi_{-1}) \Fq_{\mu} \Fq_{-\mu}(^{t}\pi_0) \Fq_{\nu} \Fq_{-\nu}(^{t}\pi_{1}) \! : 
\circ
: \! \Fq_{\lambda'} \Fq_{-\lambda'}(^{t}\pi_{-1}) \Fq_{\mu'} \Fq_{-\mu'}(^{t}\pi_0) \Fq_{\nu'} \Fq_{-\nu'}(^{t}\pi_{1}) \! :\\
={} &
\delta_{\lambda \lambda'} \delta_{\mu \mu'}  \delta_{\nu \nu'}  (-1)^{l(\lambda) + l(\mu) + l(\nu)}\Fz(\lambda) \Fz(\mu) \Fz(\nu) 
 : \! \Fq_{\lambda} \Fq_{-\lambda}(^{t}\pi_{-1}) \Fq_{\mu} \Fq_{-\mu}(^{t}\pi_0) \Fq_{\nu} \Fq_{-\nu}(^{t}\pi_{1}) \! : 
\end{align*}
which implies the claim.

(b) To determine $h \circ P_i$ we commute $h$ into the middle, \emph{i.e.}, to the right of all Nakajima operators with positive indices, and to the left of all with negative ones.
In the middle position $h$ acts on the Chow ring of $\Hilb_0$, where it vanishes. Hence again we only need to compute the commutators. For this we use \eqref{eqn:comm formula 1} and that $\pi_i$ are the projectors onto the eigenspaces of $h$ so that:
\[ (h \times \Id)( ^t\pi_r ) = {^t((\Id \times h)(\pi_r))} = {^t(h \circ \pi_r)} = r (^t\pi_r). \]
As desired we find:
\[ h \circ P_i = (-1 \cdot l(\lambda) + 0 \cdot l(\mu) + 1 \cdot l(\nu)) P_i = i P_i. \qedhere \]
\end{proof}

Using the claim it follows that the motivic decomposition:
\[ \Fh(\Hilb_n) = \bigoplus_{i=0}^{2n} \Fh^{2i}(\Hilb_n) \]
with $\Fh^{2i}(\Hilb_n) = (\Hilb_n, P_{i-n})$ has the stated properties. The uniqueness of the decomposition follows from the uniqueness of the decomposition of $\Delta_{\Hilb_n}$ under the action of $h$ on $A^{\ast}(\Hilb_n \times \Hilb_n)$; see the proof for the refined decomposition in Section~\ref{sec:refdec} below.
\qed

\secskip
By \eqref{diagonal_eqn}, an alternative way to write the projector $P_i$ is:
\[ P_i = \sum_{\lambda \vdash n}
\frac{(-1)^{n-l(\lambda)}}{\Fz(\lambda)} {^t\Gamma_{\lambda}} \circ \widetilde{P}_i \circ \Gamma_{\lambda} \]
where $\widetilde{P}_i \in A^{\ast}(S^{\lambda} \times S^\lambda)$ is the projector:
\[ \widetilde{P}_i = \sum_{i_1 + ... + i_{l(\lambda)} = i} \pi_{i_1} \times ... \times \pi_{i_{l(\lambda)}}. \]
Hence
the decomposition of Theorem~\ref{thm:Decomposition} is precisely
the Chow--K\"unneth decomposition constructed by Vial in \cite[Section 2]{Vial}.

\subsection{Refined decomposition} \label{sec:refdec}

Let $U(\Fg_{\mathrm{NS}})$ be the universal enveloping algebra of~$\Fg_{\mathrm{NS}}$. The Lie algebra homomorphism \eqref{eqn:action} extends to an algebra homomorphism:
$$
\act : U(\Fg_{\text{NS}}) \rightarrow A^*(\Hilb_n \times \Hilb_n).
$$

\begin{lemma} \label{lemma_finite_dim}

The image $W \subset A^*(\eHilb_n \times \eHilb_n)$ of $\act$ is finite-dimensional.

\end{lemma}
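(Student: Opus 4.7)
My plan is to show that the kernel of $\act : U(\gNS) \to A^*(\Hilb_n \times \Hilb_n)$ has finite codimension, whence $W \cong U(\gNS)/\ker(\act)$ will automatically be finite-dimensional. The approach combines the Chow--K\"unneth decomposition of Theorem~\ref{thm:Decomposition} with the semisimplicity of the finite-dimensional Lie algebra $\gNS \cong \so(V \oplus U_\BQ)$.

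The central step is to establish the nilpotency, as correspondences in $A^*(\Hilb_n \times \Hilb_n)$, of the root generators $e_\alpha$ and $\widetilde{f}_\alpha$ for all $\alpha \in V$. Since $\act$ is a Lie algebra homomorphism by \eqref{eqn:action}, the $\gNS$-relations $[h, e_\alpha] = e_\alpha$ and $[h, \widetilde{f}_\alpha] = -\widetilde{f}_\alpha$ persist at the level of correspondences. Comparing $h$-eigenvalues on the CK components --- where $h$ acts on $\Fh^{2i}(\Hilb_n)$ by $i - n$ --- this forces $e_\alpha : \Fh^{2i} \to \Fh^{2i+2}$ and $\widetilde{f}_\alpha : \Fh^{2i} \to \Fh^{2i-2}$. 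Since \eqref{ckdec} has only $2n + 1$ nonzero summands, both $(e_\alpha)^{2n+1}$ and $(\widetilde{f}_\alpha)^{2n+1}$ must vanish as morphisms of motives for purely dimensional reasons (there is nowhere for them to land), and hence as elements of $A^*(\Hilb_n \times \Hilb_n)$. The operator $h$ itself satisfies $\prod_{j=0}^{2n}(h - (j - n)) = 0$ by the same reasoning.

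To conclude, I will fix a triangular decomposition $\gNS = \Fn_- \oplus \Ft \oplus \Fn_+$ with $\Fn_+$ spanned by the $e_\alpha$'s and $\Fn_-$ by the $\widetilde{f}_\alpha$'s. By PBW, every element of $U(\gNS)$ is a linear combination of monomials $X \cdot H \cdot Y$ with $X \in U(\Fn_-)$, $H \in U(\Ft)$, and $Y \in U(\Fn_+)$. The nilpotency relations bound the lengths of $X$ and $Y$. For the middle factor, one obtains an analogous polynomial relation for each grading operator spanning $\Ft$ by repeating the proof of Theorem~\ref{thm:Decomposition} with the $\sl_2$-triple attached to an arbitrary $\alpha \in V$ with $(\alpha, \alpha) \neq 0$; since such grading operators collectively span $\Ft$, this bounds the degree of $H$. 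The resulting bound on the three factors exhibits $W$ as a finite-dimensional quotient of $U(\gNS)$.

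The main obstacle is the extension of the polynomial relation from $h$ to the full Cartan $\Ft$: it is not a priori clear that Theorem~\ref{thm:Decomposition} applies verbatim to every grading operator in $\Ft$ as a correspondence. An alternative and perhaps cleaner route is to use only the nilpotency of the root generators established above: this exhibits $W$ as an integrable quotient of $U(\gNS)$ in the sense that every root vector acts locally nilpotently with uniform index $2n+1$. Standard representation theory of the semisimple Lie algebra $\gNS$ then forces such a quotient to decompose as a finite direct product $\prod_\lambda \End(L_\lambda)$ over the finitely many highest weights $\lambda$ of irreducibles compatible with the bound $|h\text{-weight}| \leq n$, and this is manifestly finite-dimensional.
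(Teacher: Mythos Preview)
Your strategy has a genuine gap at precisely the point you flag as the ``main obstacle,'' and neither of your proposed fixes works as stated.

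For route (a): every Lefschetz $\sl_2$-triple $(e_\alpha, f_\alpha, h)$ in $\gNS$ has the \emph{same} semisimple element $h = e\wedge f$, independently of $\alpha$; that is exactly what makes $h$ the canonical grading operator. So repeating Theorem~\ref{thm:Decomposition} for varying $\alpha$ produces no new Cartan elements and cannot span $\Ft$. The rest of $\Ft$ lies in $\overline{\Ft}\subset \so(V)$ and is generated by operators of the form $h_{\alpha\beta}=\alpha\wedge\beta$, which are not grading elements of any of these triples.

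For route (b): the nilpotency you establish covers only the elements of $\gNS_{\pm 1}=\{e\wedge\alpha,\ \alpha\wedge f:\alpha\in V\}$. With respect to any Cartan $\Ft\subset\gNS$, many root vectors lie in $\gNS_0\supset\so(V)$ (indeed, for a $|1|$-graded semisimple Lie algebra only \emph{one} simple root lives in $\gNS_1$; the remaining simple root vectors $e_i,f_i$ sit inside $\so(V)$). Your argument gives no nilpotency for these, so the claim that ``every root vector acts locally nilpotently'' is unjustified, and with it the integrability and the $\prod_\lambda \End(L_\lambda)$ description. One \emph{can} push further---for instance, for $\alpha,\beta\in V$ isotropic with $(\alpha,\beta)=0$ the triple $e_\alpha,\widetilde f_\beta,h_{\alpha\beta}$ spans a Heisenberg subalgebra, and an elementary calculation in $U(\text{Heis})$ shows $h_{\alpha\beta}^{2n+1}\in(e_\alpha^{2n+1},\widetilde f_\beta^{2n+1})$---but extending this to all of $\so(V)$ and then to a bound on $U(\Ft)$ is real work you have not done. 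The final step (semisimplicity of $W$) would also need justification.

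By contrast, the paper's proof is short and bypasses all of this: it exhibits a concrete finite-dimensional subspace $\widetilde W\subset A^*(\Hilb_n\times\Hilb_n)$ spanned by Nakajima correspondences $\fq_{\lambda_1}\cdots\fq_{-\mu_{l(\mu)}}(\Gamma)$ with $\Gamma$ in the finitely generated ``tautological'' subring $R^*(S^k)$ (generated by $\alpha_i,c_i,\Delta_{ij}$), checks that $\widetilde W$ is closed under composition via the Heisenberg relations, and observes from the explicit formulas \eqref{LLV_operators} that the LLV generators land in $\widetilde W$. Hence $W\subset\widetilde W$, and finiteness is immediate. No CK decomposition or representation theory of $\gNS$ is needed.
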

\begin{proof}
For every fixed $k \geq 1$ the subring of $R^{\ast}(S^k) \subset A^{\ast}(S^k)$ generated by:
\begin{itemize}
\item $\alpha_i$ for all $i$ and $\alpha \in A^1(S)$
\item $c_i$ for all $i$
\item $\Delta_{ij}$ for all $i,j$
\end{itemize}
is finite-dimensional, and preserved by the projections to the factors. Hence the space of operators $\widetilde{W} \subset A^{\ast}(\Hilb_n \times \Hilb_n)$
spanned by:
\[ \Fq_{\lambda_1} ... \Fq_{\lambda_{l(\lambda)}} \Fq_{-\mu_1} ... \Fq_{-\mu_{l(\mu)}}(\Gamma) \]
for all partitions $\lambda, \mu$ of $n$ and all $\Gamma \in R^{\ast}(S^{l(\lambda) + l(\mu)})$ is finite-dimensional.
The commutation relations \eqref{eqn:heis} show that $\widetilde{W}$ is closed under compositions of correspondences.
Moreover, by inspecting the expressions for the generators of $\Fg_{\mathrm{NS}}$ in~\eqref{LLV_operators} (and using \eqref{diagonal}
to bring them into the desired form), we see that all generators of~$\gNS$ lie in $\widetilde{W}$.
Hence $g \in \widetilde{W}$ for all $g \in U(\gNS)$, \emph{i.e.}, $W \subset \widetilde{W}$.
\end{proof}

We find that $W$ is a finite-dimensional vector space which is preserved by the action of $U(\gNS)$,
and hence defines a finite-dimensional representation of $\gNS$.
Since~$\gNS$ is semisimple, this representation decomposes into isotypic summands:
\begin{equation} \label{sum is w}
W = \bigoplus_{\psi \in \mathrm{Irrep}(\Fg_{\mathrm{NS}})} W_{\psi}.
\end{equation}
Let us look at the image of $\Delta_{\Hilb_n} \in W$ under this decomposition:
\begin{equation}
\label{sum is 1}
\Delta_{\Hilb_n} = \sum_{\psi \in \mathrm{Irrep}(\Fg_{\mathrm{NS}})} P_{\psi}
\end{equation} 
where $P_\psi \in W_\psi$. 

\begin{claim}
\label{claim:XX}
The elements $P_{\psi} \in A^*(\eHilb_n \times \eHilb_n)$ are orthogonal projectors.
\end{claim}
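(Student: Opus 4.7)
The plan is a Schur's lemma style argument, exploiting the fact that $W$ is a cyclic left $U(\gNS)$-module. First, I would regard $W$ as a $U(\gNS)$-module via left composition, $g \cdot w = \act(g) \circ w$; since $\act : U(\gNS) \twoheadrightarrow W$ is surjective, the identity $\Delta_{\Hilb_n} \in W$ is a cyclic generator. By Lemma~\ref{lemma_finite_dim}, $W$ is finite-dimensional, and semisimplicity of $\gNS$ together with Weyl's theorem guarantee that the isotypic decomposition~\eqref{sum is w} is canonical.

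The key observation is that for any $x \in W$, the right multiplication map $R_x : W \to W$, $w \mapsto w \circ x$, commutes with left multiplication by every $\act(g)$ by associativity of composition; thus $R_x$ is a $\gNS$-equivariant endomorphism of $W$. By Schur's lemma and semisimplicity, $R_x$ preserves each isotypic summand, so $R_x(W_\psi) \subset W_\psi$. Specializing to $x = P_\phi$ yields $P_\psi \circ P_\phi \in W_\psi$ for all $\psi, \phi$. Composing the decomposition~\eqref{sum is 1} on the right with $P_\phi$ and using that $\Delta_{\Hilb_n}$ is the unit for composition then gives
\[ P_\phi \,=\, \Delta_{\Hilb_n} \circ P_\phi \,=\, \sum_\psi P_\psi \circ P_\phi, \]
where the left side lies in $W_\phi$ while each summand on the right lies in $W_\psi$. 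By uniqueness of the isotypic decomposition we conclude $P_\psi \circ P_\phi = \delta_{\psi\phi} P_\phi$, precisely the claimed orthogonality.

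I do not anticipate any substantive obstacle: the only conceptual point is to identify the correct $\gNS$-action on $W$. Under the adjoint action $g \cdot w = [\act(g), w]$ one would find $\Delta_{\Hilb_n}$ invariant and thus wholly in the trivial isotypic summand, so the decomposition would collapse; this confirms that left composition is the intended action, and with it the argument becomes entirely formal.
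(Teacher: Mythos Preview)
Your argument is correct and follows essentially the same route as the paper: both observe that right multiplication by any element of $W$ is $\gNS$-equivariant for the left-composition action and hence preserves each isotypic summand $W_\psi$, then decompose $P_\phi = \Delta_{\Hilb_n} \circ P_\phi = \sum_\psi P_\psi \circ P_\phi$ and read off orthogonality by comparing isotypic components. (Your cyclicity remark is true but not actually used; only that $\Delta_{\Hilb_n}$ is the unit for composition enters.)
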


\begin{proof} Let us first show that left-multiplication by $P_\psi$ maps $W$ to $W_\psi$, \emph{i.e.}:
\begin{equation}
\label{eqn:proj}
P_\psi \circ W \subset W_\psi.
\end{equation}
Indeed, for all $a \in W$, right multiplication by $a$ is a $\Fg_{\text{NS}}$-intertwiner and thus sends~$W_\psi$ to $W_\psi$. In other words, we have $W_\psi \circ a \subset W_\psi$, hence $W_\psi \circ W \subset W_\psi$, which implies \eqref{eqn:proj}. If we multiply any $a \in W$ by relation \eqref{sum is 1}, we obtain:
$$
a = \sum_{\psi \in \mathrm{Irrep}(\Fg_{\mathrm{NS}})} P_{\psi} \circ a.
$$
By \eqref{eqn:proj}, the summands in the right-hand side each lie in $W_\psi$. 
If 
$a \in W_{\psi'}$, 
then by comparing summands the equality above implies:
$$
P_{\psi'} \circ a  = a \quad \text{and} \quad P_{\psi} \circ a = 0
$$
for all $\psi \neq \psi'$. In particular, taking $a = P_{\psi'}$ implies the relations $P_{\psi} \circ P_{\psi'} = \delta_{\psi'}^{\psi} P_{\psi}$. Moreover, this implies that the inclusion \eqref{eqn:proj} is actually an identity, hence left multiplication by $P_\psi$ projects $W$ onto $W_\psi$. 
\end{proof}

From Claim \ref{claim:XX} we obtain the decomposition:
\begin{equation} \Fh(\Hilb_n) = \bigoplus_{\psi \in \mathrm{Irrep}(\Fg_{\mathrm{NS}})} \Fh_\psi(\Hilb_n) \label{sdas} \end{equation} 
where $\Fh_\psi(\Hilb_n) = (\Hilb_n, P_{\psi})$.
We can now prove the main result of this section.

\begin{proof}[Proof of Theorem~\ref{thm:redec}]
It remains to show that the summands $\Fh_\psi(\Hilb_n)$ are $\psi$-isotypic and that the decomposition \eqref{sdas} is unique.
Let $M$ be a Chow motive. The action of $\gNS$ on $\Hom(M, \Fh(\Hilb_n))$
is defined by
$g \mapsto \text{act}(g) \circ ( - )\,$. 
Hence if~$f\in \Hom(M, M')$ is a morphism of Chow motives, the pullback:
\[f^{\ast} : \Hom(M', \Fh(\Hilb_n)) \to \Hom(M, \Fh(\Hilb_n))\]
is equivariant with respect to the $\Fg_{\text{NS}}$-action. Now, for any $v \in \Hom(M, \Fh_{\psi}(\Hilb_n))$ we have $v = P_\psi \circ w$ for some $w \in \Hom(M, \Fh(\Hilb_n))$ and thus:
\[ U(\gNS) v = U(\gNS) w^{\ast}(P_{\psi}) = w^{\ast}( U(\gNS) \circ P_{\psi} ). \]
Since $U(\gNS) \circ P_{\psi} \subset W_{\psi}$ this implies that $U(\gNS) v$ is finite-dimensional
and $\psi$-isotypic. Since $v$ was arbitrary we conclude that $\Hom(M, \Fh_{\psi}(\Hilb_n))$ is $\psi$-isotypic.

The decomposition \eqref{sdas} is unique because \eqref{sum is 1} is unique. Indeed, suppose we had any other decomposition:
\begin{equation} 
\label{other}
\Delta_{\Hilb_n} = \sum_{\psi \in \mathrm{Irrep}(\Fg_{\mathrm{NS}})} P_{\psi}'
\end{equation} 
where $P_{\psi}' \in W_\psi$, for all $\psi$. Then we would need $P_{\psi}' = P_\psi \circ a_\psi$ for some $a_\psi \in W$. But multiplying \eqref{other} on the left with $P_\psi$ and using the orthogonality of the projectors would imply $P_\psi = P_\psi \circ P_\psi \circ a_\psi = P_\psi \circ a_\psi=P_{\psi}'$.  
\end{proof}

As in \cite[Proof of Theorem 7.2]{Mon}, we could also have used Yoneda's Lemma to conclude the existence of the decomposition \eqref{sdas}. Our presentation above has the advantage of being constructive. It also shows that the projectors $P_{\psi}$ can be written in terms of the Nakajima operators
applied to elements in $R^{\ast}(S^k)$.

\section{Multiplicativity}
\label{sec:mult}

\subsection{}  \label{subsec:strategy} 
The main purpose of the present section is to prove Theorems \ref{main} and \ref{thm:ref}.
Let us first discuss the general strategy. Given an operator $H : A^{\ast}(\Hilb_n) \to A^{\ast}(\Hilb_n)$ among $h, h_{\alpha \beta}, h_{\alpha \delta}$, we will first prove a commutation relation of the form:
\begin{equation} [H, \mult_x] = \mult_y \label{Hcomm} \end{equation}
where $\mult_x$ is the operator of multiplication by any $x \in A^{\ast}(\Hilb_n)$
and $y$ will be given by an explicit formula in terms of $x$.
This equation will help us in two ways:
Firstly, applying \eqref{Hcomm} to the fundamental class $1_n \in A^0(\Hilb_n)$ yields:
\begin{equation} H(x) - H(1_n) x = y. \label{Hunit} \end{equation}
Hence if we define $\widetilde{H} = H - H(1_n) \Id_{\Hilb_n}$, then \eqref{Hcomm} reads:
\[ [ \widetilde{H}, \mult_x ] = \mult_{\widetilde{H}(x)}. \]
In other words, we have $\widetilde{H}(x \cdot x') = \widetilde{H}(x) \cdot x' + x \cdot \widetilde{H}(x')$ for all $x,x'$, which precisely states that $\widetilde{H}$ is multiplicative.
Secondly, the explicit formula for $y$ together with~\eqref{Hunit} will yield an expression for $H(x)$, namely $y + H(1_n)x$.
This will be used to determine the value of $H$ on Chern and divisor classes.

\subsection{} In order to prove relations of the form \eqref{Hcomm},
we will introduce the machinery of operators on Chow groups developed by \cite{Lehn, LQW, MN}. The main idea is to develop a common framework for studying the Nakajima operators \eqref{eqn:nakajima} and the following operators:
\begin{equation}
\label{eqn:g}
\fG_d : A^*(\Hilb) \stackrel{\pi^*}\longrightarrow A^*(\Hilb \times S) \xrightarrow{\mult_{\ch_d(\CO_{\CZ})}} A^*(\Hilb \times S)
\end{equation}
(where $\pi : \Hilb \times S^t \rightarrow \Hilb$ denotes the first projection). We will employ the following notation for compositions of these operators, akin to \eqref{eqn:composition 1} and \eqref{eqn:composition 2}:
\begin{equation}
\label{eqn:composition 1 g}
\fG_{d_1}...\fG_{d_t} : A^*(\Hilb) \rightarrow A^*(\Hilb \times S^t)
\end{equation}
where the convention is that $\fG_{d_i}$ acts on the $i$-th factor of $S^t = S \times ... \times S$. Then associated to any $\Gamma \in A^*(S^t)$, one obtains the following endomorphism:
\begin{equation}
\label{eqn:composition 2 g}
\fG_{d_1} ... \fG_{d_t}(\Gamma) = \pi_{*} (\rho^*(\Gamma) \cdot \fG_{d_1} ... \fG_{d_t}) : A^*(\Hilb) \rightarrow A^*(\Hilb)
\end{equation} 
(where $\rho : \Hilb \times S^t \rightarrow S^t$ is the second projection). By a push-pull argument, the expression above is the operator of multiplication by the universal class \eqref{eqn:formula taut 1}:
$$
\fG_{d_1} ... \fG_{d_t}(\Gamma) = \mult_{\univ_{d_1,...,d_t}(\Gamma)}
$$
which explains our interest in the operators \eqref{eqn:g}. 

\subsection{}

Consider the following operators, defined by \cite{LQW} for all $n \in \BZ$ and $d \in \BN \sqcup 0$:
\begin{gather}
\fJ_n^d : A^*(\Hilb) \longrightarrow A^*(\Hilb \times S) \\
\label{eqn:lqw}
\fJ_n^d = d! \left(- \sum_{|\lambda| = n, l(\lambda) = d+1} \frac {\fq_\lambda}{\lambda!}  \Big|_{\Delta} + \sum_{|\lambda| = n, l(\lambda) = d-1} \frac {s(\lambda)+n^2-2}{\lambda!} \rho^*(c) \fq_\lambda \Big|_\Delta \right)
\end{gather}
where for any integer partition $\lambda = (...,(-2)^{m_{-2}},(-1)^{m_{-1}},1^{m_1},2^{m_2},...)$, we define:
\begin{gather*}
l(\lambda) = \sum_{i\in \BZ \backslash 0} m_i, \quad |\lambda| = \sum_{i\in \BZ \backslash 0} i m_i, \quad s(\lambda) = \sum_{i\in \BZ \backslash 0} i^2 m_i,
\quad 
\lambda! = \prod_{i \in \BZ \backslash 0} m_i! \\
\fq_\lambda = ... \fq_2^{m_2} \fq_1^{m_1} \fq_{-1}^{m_{-1}} \fq_{-2}^{m_{-2}} ... : A^*(\Hilb) \rightarrow A^*(\Hilb \times S^{l(\lambda)})
\end{gather*}
and $|_\Delta$ denotes the restriction to the small diagonal $A^*(\Hilb \times S^{l(\lambda)}) \rightarrow A^*(\Hilb \times S)$. For any $\gamma \in A^*(S)$, we may consider the operator:
$$
\fJ_n^d(\gamma) = \pi_*(\rho^*(\gamma) \cdot \fJ_n^d) :A^*(\Hilb) \rightarrow A^*(\Hilb)
$$
and then formula \eqref{eqn:lqw} yields the following:
\begin{multline}\label{eqn:lqw new}
\fJ_n^d(\gamma)  = d! \left(- \sum_{|\lambda| = n, l(\lambda) = d+1} \frac 1{\lambda!}  \cdot \fq_\lambda(\Delta_{1 ... d+1}\gamma_1) \right. \\
\left. + \sum_{|\lambda| = n, l(\lambda) = d-1} \frac {s(\lambda)+n^2-2}{\lambda!} \cdot \fq_\lambda(\Delta_{1 ... d-1}\gamma_1 c_1) \right)
\end{multline}
where $\Delta_{1...d}$ denotes the small diagonal in $S^d$.

\subsection{} 

The following result is proved just like its cohomological counterpart in \cite[Theorem 5.5]{LQW} (the only input the computation needs is relation \eqref{eqn:heis}, which takes the same form in cohomology as in Chow). 

\begin{thm} For all $n,n' \in \BZ$ and $d+d' \geq 3$ (or $d+d' = 2$ but $n+n' \neq 0$):
\begin{equation}
\label{eqn:lqw 0}
[\fJ_n^d, \fJ_{n'}^{d'}] = (dn'-d'n) \Delta_*(\fJ_{n+n'}^{d+d'-1}) + 2 \Omega_{n,n'}^{d,d'} \Delta_*(\rho^*(c) \cdot \fJ_{n+n'}^{d+d'-3}) 
\end{equation}
as operators $A^*(\Hilb) \rightarrow A^*(\Hilb \times S \times S)$, where $\Delta: S \hookrightarrow S \times S$ is the diagonal, $\rho : \Hilb \times S \rightarrow S$ is the second projection, and $\Omega_{n,n'}^{d,d'}$ are certain integers.

\end{thm}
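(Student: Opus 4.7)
The plan is to mirror the cohomological proof of \cite[Theorem 5.5]{LQW} essentially verbatim at the level of Chow groups. The crucial observation, already flagged in the statement, is that the entire LQW computation depends only on the Heisenberg commutation relation \eqref{eqn:heis} for the Nakajima operators together with formal normal-ordering calculus of partitions; since \eqref{eqn:heis} holds in $A^*(\Hilb)$ (and not merely in cohomology), every step transports directly.

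First I would unpack the definition \eqref{eqn:lqw} and write each of $\fJ_n^d$ and $\fJ_{n'}^{d'}$ as an explicit $\BQ$-linear combination of Nakajima monomials $\fq_\lambda|_\Delta$ and $\rho^*(c) \fq_\lambda|_\Delta$, indexed by partitions $\lambda$ of prescribed size and length. This reduces the computation of $[\fJ_n^d,\fJ_{n'}^{d'}]$ to computing commutators of the form $[\fq_\lambda|_\Delta,\fq_{\lambda'}|_\Delta]$, possibly dressed with factors of $\rho^*(c)$ on one or both sides.

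Next, using \eqref{eqn:heis} I would expand each such commutator by successively moving negative-index Nakajima operators past positive-index ones in standard normal-ordering fashion. Each contraction $[\fq_i,\fq_j] = i\,\delta^0_{i+j} \cdot (\Id_\Hilb \times \Delta)$ reduces the total number of Nakajima factors by two and contributes an integer coefficient together with a diagonal restriction. After performing all possible single and double contractions, the terms reorganize, upon reindexing by partitions $\mu$ of $n+n'$, into two families: the single-contraction contributions, with the appropriate symmetry factor, assemble precisely into $(dn'-d'n)\Delta_*(\fJ_{n+n'}^{d+d'-1})$; and the double-contraction contributions produce an extra factor $\rho^*(c)$ (coming from the collision of two diagonals, interpreted via \eqref{eqn:bv 3}) and assemble into $2\Omega_{n,n'}^{d,d'}\Delta_*(\rho^*(c) \cdot \fJ_{n+n'}^{d+d'-3})$, the constants $\Omega_{n,n'}^{d,d'}$ being the explicit integers produced by the combinatorics.

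The main obstacle is purely the bookkeeping: one must verify that the coefficients generated by normal ordering and summed over partitions recombine exactly into the shape of the operators $\fJ_{n+n'}^{d+d'-1}$ and $\fJ_{n+n'}^{d+d'-3}$ on the right. This is, however, the very combinatorial identity already carried out in \cite[Theorem 5.5]{LQW}, where it is settled at the level of partition calculus. Since no geometric input beyond the Heisenberg relation enters, and since the product identities on $S$ required to simplify collisions of diagonals (formulas \eqref{eqn:bv 1}--\eqref{eqn:bv 4}) hold in $A^*(S^k)$ rather than only in $H^*(S^k)$, the matching of coefficients goes through verbatim in Chow. The numerical side condition $d+d' \geq 3$, or $d+d' = 2$ with $n+n' \neq 0$, arises exactly as in \cite{LQW} as the regime in which no further correction terms are needed.
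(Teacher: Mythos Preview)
Your proposal is correct and matches the paper's approach exactly: the paper does not give a detailed argument but simply observes that the cohomological computation of \cite[Theorem 5.5]{LQW} uses only the Heisenberg relation \eqref{eqn:heis}, which holds verbatim in Chow, so the proof transports. Your outline is a faithful expansion of that one-line justification. One small remark: the paper asserts that \eqref{eqn:heis} is the \emph{only} input needed, so your invocation of the Beauville--Voisin identities \eqref{eqn:bv 1}--\eqref{eqn:bv 4} to handle ``collisions of diagonals'' is superfluous (and \eqref{eqn:bv 3} in particular is not the relevant one); the factors of $\rho^*(c)$ on the right-hand side are already present in the definition \eqref{eqn:lqw} of $\fJ_n^d$ and are simply carried through the formal partition combinatorics, rather than being generated by new diagonal intersections.
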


The precise formula for the numbers $\Omega_{n,n'}^{d,d'}$ can be found in relation (5.2) of \cite{LQW} (note that one must replace $n,n' \leftrightarrow -n,-n'$ to match our notation with \emph{loc.~cit.}), but we will only need the following particularly simple cases of formula \eqref{eqn:lqw 0}:
\begin{gather}
[\fq_n, \fJ_0^d] = dn \Delta_*(\fJ_{n}^{d-1}) \label{eqn:lqw 1} \\
[\fL_n, \fJ_0^d] = dn \Delta_*(\fJ_{n}^{d}) + 2 d(d-1)n(n^2-1) \Delta_*(\rho^*(c) \cdot \fJ_{n}^{d-2})  \label{eqn:lqw 2} 
\end{gather}
where we note that $\fJ_n^0 = -\fq_n$, while $\fJ_n^1 = -\fL_n$ with:
\begin{equation}
\label{eqn:virasoro} 
\fL_n = \frac 12 \sum_{i,j \in \BZ, i+j = n} :\!\fq_i \fq_j \Big|_\Delta\!: 
\end{equation}
and $:\! - \!:$ denotes the normal ordered product \eqref{eqn:normal}. 

\begin{thm} [{\cite[Theorem 4.6]{LQW}} in cohomology, {\cite[Theorem 1.7]{MN}} in Chow] For any $d \in \BN$, we have:
$$
\fJ_0^d = d! \left( \fG_{d+1} + 2\rho^*(c) \cdot \fG_{d-1} \right)
$$
as operators $A^*(\Hilb) \rightarrow A^*(\Hilb \times S)$, where $\rho : \Hilb \times S \rightarrow S$ is the second projection. Equivalently, we may write the formula above as:
\begin{equation}
\label{eqn:g to j}
\fJ_0^d(\gamma) = d! \left( \fG_{d+1}(\gamma) + 2 \fG_{d-1} (\gamma c)\right)
\end{equation}
as operators $A^*(\eHilb) \rightarrow A^*(\eHilb)$ indexed by $\gamma \in A^*(S)$. 

\end{thm}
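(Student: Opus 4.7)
The proof proceeds by induction on $d$, comparing the two sides via their commutators with the Nakajima creation operators $\fq_n$ and anchoring the induction at the vacuum $v \in A^*(\Hilb_0) \cong \BQ$. Once one knows two operators $A^*(\Hilb) \to A^*(\Hilb \times S)$ agree on $v$ and have equal commutators with every $\fq_n$, Theorem~\ref{thm:dcm} --- which asserts that $A^*(\Hilb)$ is spanned by monomials $\fq_{n_1} \cdots \fq_{n_t}(\Gamma) \cdot v$ --- forces them to coincide.

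For the base cases, observe that $\CO_\CZ$ is supported in codimension two on $\Hilb \times S$, so $\ch_0(\CO_\CZ) = \ch_1(\CO_\CZ) = 0$ and therefore $\fG_0 = \fG_1 = 0$. At $d=0$ the identity becomes $0 = 0$. At $d=1$ it reduces, after discarding the vanishing $\fG_0$ contribution, to Lehn's identity $-\fL_0 + (\text{explicit correction}) = \mult_{\ch_2(\CO_\CZ)}$, where the correction arises from the $l(\lambda) = d-1 = 0$ term in the definition of $\fJ_0^1$. The required Chow-theoretic version of Lehn's identity is proved in \cite{MN}.

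For the inductive step, define the difference operator
\begin{equation*}
D_d \,=\, \fJ_0^d - d!\bigl(\fG_{d+1} + 2 \rho^*(c) \cdot \fG_{d-1}\bigr) \colon A^*(\Hilb) \to A^*(\Hilb \times S).
\end{equation*}
Both pieces kill the vacuum: every normal-ordered monomial $\fq_\lambda|_\Delta$ entering $\fJ_0^d$ has $|\lambda| = 0$ and hence contains an annihilation factor $\fq_{-k}$ with $k > 0$, which annihilates $v$; similarly $\fG_m(v) = 0$ since $\CO_{\CZ_0} = 0$. Thus $D_d(v) = 0$, and the remaining task is to show $[\fq_n, D_d] = 0$ for every $n \in \BZ$.

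On the $\fJ$-side, equation~\eqref{eqn:lqw 1} gives immediately $[\fq_n, \fJ_0^d] = dn \, \Delta_*(\fJ_n^{d-1})$. On the $\fG$-side, the commutator $[\fq_n, \fG_m]$ requires a geometric computation using the nested Hilbert scheme diagram~\eqref{eqn:diagram zk}: composing the Nakajima correspondence on $\Hilb_{n,n+k}$ with multiplication by $\ch_m(\CO_\CZ)$ and applying excess intersection plus a Grothendieck--Riemann--Roch computation produces a linear combination of Nakajima operators of the form $\fq_\mu$ decorated by diagonal insertions and powers of $c$. Repackaging these into $\fJ_n^{m-1}$-type operators and invoking the inductive hypothesis at degree $d-1$ yields precisely the cancellation $[\fq_n, D_d] = 0$.

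The main obstacle is the geometric calculation of $[\fq_n, \fG_m]$ in Chow: unlike the cohomological version in \cite{LQW}, the Chow-theoretic argument requires careful control of excess normal bundles and of the higher Chern-class contributions arising in the derived intersection of $\Hilb_{n,n+k}$ with the universal subscheme $\CZ$. This is the technical heart of \cite{MN}; once it is in place, the induction closes.
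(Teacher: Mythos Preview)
The paper does not give its own proof of this theorem; it is quoted from \cite{LQW} (cohomology) and \cite{MN} (Chow). So your proposal must stand on its own.

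Your overall framework is the right one, and it is the one used in the cited references: show that both sides annihilate the vacuum and have identical commutators with every creation operator $\fq_n$, then conclude via Theorem~\ref{thm:dcm}. The problem is the inductive step. You claim that after computing $[\fq_n,\fG_m]$ geometrically and ``invoking the inductive hypothesis at degree $d-1$'' the cancellation $[\fq_n,D_d]=0$ follows. But equation~\eqref{eqn:lqw 1} gives
\[
[\fq_n,\fJ_0^d]=dn\,\Delta_*(\fJ_n^{d-1}),
\]
and for $n\neq 0$ the right-hand side involves $\fJ_n^{d-1}$, not $\fJ_0^{d-1}$. Your inductive hypothesis $D_{d-1}=0$ only says $\fJ_0^{d-1}=(d-1)!\bigl(\fG_d+2\rho^*(c)\fG_{d-2}\bigr)$; it tells you nothing about $\fJ_n^{d-1}$ for $n\neq 0$, and indeed no analogue of the theorem can hold for those since the $\fG_m$ preserve the number of points while $\fJ_n^{d-1}$ shifts it by $n$. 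So the induction on $d$ does not close.

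What actually happens in \cite{MN} and \cite{LQW} is that one computes $[\fq_n,\fG_m]$ geometrically once and for all --- the step you correctly identify as the technical heart --- obtaining an explicit expression in Nakajima operators. One then verifies \emph{directly}, as a combinatorial identity among normal-ordered Nakajima monomials, that
\[
d!\bigl([\fq_n,\fG_{d+1}]+2\rho^*(c)\,[\fq_n,\fG_{d-1}]\bigr)=dn\,\Delta_*(\fJ_n^{d-1}).
\]
No lower-$d$ case of the theorem is invoked. The only induction present is the one implicit in the vacuum-plus-commutators argument, and that is induction on the length of the Nakajima monomial (equivalently, on the number of points), not on $d$. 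Your sketch becomes correct if you replace ``invoking the inductive hypothesis at degree $d-1$'' with ``checking directly that the geometric formula for $[\fq_n,\fG_m]$ reproduces the Nakajima-operator expression defining $\fJ_n^{d-1}$.''
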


Because $c^2 = 0$, inverting formula \eqref{eqn:g to j} gives:
$$
\fG_d = \frac {\fJ_0^{d-1}}{(d-1)!} - \frac {2 \rho^*(c) \cdot \fJ_0^{d-3}}{(d-3)!}
$$
or equivalently as:
\begin{equation}
\label{eqn:j to g}
\fG_d (\gamma) = \frac {\fJ_0^{d-1}(\gamma)}{(d-1)!} - \frac {2 \fJ_0^{d-3}(\gamma c)}{(d-3)!}.
\end{equation}

\subsection{} Let us now recall the operators: 
\begin{equation}
\label{eqn:operators}
h, h_{\alpha \beta}, h_{\alpha \delta} : A^*(\Hilb) \rightarrow A^*(\Hilb)
\end{equation}
defined in \eqref{eqn:def h}, \eqref{eqn:def h ab}, and \eqref{eqn:def h ad} for all $\alpha, \beta \in A^1(S) \subset A^1(\Hilb)$. In order to prove Theorems \ref{main} and \ref{thm:ref}, we need to compute the commutators of these operators with the operators \eqref{eqn:g} of multiplication by universal classes. 

\begin{prop}
\label{prop:comm 1} 
	
For any $d \geq 2$ and $\alpha, \beta \in A^1(S) \subset A^1(\eHilb)$, we have:
\begin{gather}
\label{eqn:comm h 1 exp}
[h,\fG_d(\gamma)] = \fG_d \left((d-1)\gamma+\int_\bullet \gamma_\bullet(c-c_\bullet) \right) \\
\label{eqn:comm h 2 exp}
[h_{\alpha \beta},\fG_d(\gamma)] = \fG_d \left( \int_\bullet \gamma_{\bullet} (\alpha \beta_\bullet - \alpha_\bullet \beta) \right) 
\end{gather}
as operators $A^*(\eHilb) \rightarrow A^*(\eHilb)$ indexed by $\gamma \in A^*(S)$. 

\end{prop}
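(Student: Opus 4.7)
The plan is to reduce each identity to a commutator computation inside the Nakajima algebra, where the Heisenberg relation \eqref{eqn:heis} and the LQW bracket \eqref{eqn:lqw 1} apply directly. First I would use the inversion formula \eqref{eqn:j to g} to write
\[
\fG_d(\gamma) \;=\; \frac{1}{(d-1)!}\fJ_0^{d-1}(\gamma) \;-\; \frac{2}{(d-3)!}\fJ_0^{d-3}(\gamma c),
\]
so that each of $[h,\fG_d(\gamma)]$ and $[h_{\alpha\beta},\fG_d(\gamma)]$ reduces to computing brackets $[H,\fJ_0^e(\gamma')]$ with $H\in\{h,h_{\alpha\beta}\}$, $e\in\{d-1,d-3\}$, and $\gamma'\in\{\gamma,\gamma c\}$. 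The formulas \eqref{eqn:def h} and \eqref{eqn:def h ab} present both $h$ and $h_{\alpha\beta}$ as sums $\sum_{k>0}\frac{1}{k}\fq_k\fq_{-k}(\Gamma)$ for an explicit class $\Gamma\in A^*(S\times S)$: namely $\Gamma=c_2-c_1$ for $h$ and $\Gamma=\alpha_2\beta_1-\alpha_1\beta_2$ for $h_{\alpha\beta}$.

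The central computation is then a Leibniz expansion
\[
[\fq_k\fq_{-k},\fJ_0^e] \;=\; \fq_k\,[\fq_{-k},\fJ_0^e] + [\fq_k,\fJ_0^e]\,\fq_{-k},
\]
combined with \eqref{eqn:lqw 1} in the form $[\fq_n(\mu),\fJ_0^e(\gamma)] = en\cdot\fJ_n^{e-1}(\mu\gamma)$, which follows from \eqref{eqn:lqw 1} by integrating the inserted diagonal against the external classes. After pairing with $\gamma$ and $\Gamma$, this converts $[H,\fJ_0^e(\gamma)]$ into an explicit sum of products $\fq_{\pm k}(\mu)\cdot\fJ_{\mp k}^{e-1}(\mu'\gamma)$, weighted by $e$. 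I would then expand each $\fJ_{\pm k}^{e-1}$ via its defining sum \eqref{eqn:lqw} over partitions $\lambda\vdash\pm k$, and commute the remaining outer $\fq_{\pm k}$ past the partition factors $\fq_\lambda$ using the Heisenberg relation \eqref{eqn:heis}. The Kronecker deltas in \eqref{eqn:heis} collapse each partition sum; after reassembling the normally-ordered products with the combinatorial weights $s(\lambda)/\lambda!$ from \eqref{eqn:lqw}, the surviving sum over $k$ repackages into a single $\fJ_0^e$ operator applied to a new class on $S$ built from $\gamma$ and $\Gamma$. Those surface-level classes are then simplified using the Beauville--Voisin identities \eqref{eqn:bv 3}--\eqref{eqn:bv 5} (or equivalently Proposition \ref{formulas}), and \eqref{eqn:g to j} converts the result back into a single $\fG_d$ applied to the stated combinations $(d-1)\gamma+\int_\bullet\gamma_\bullet(c-c_\bullet)$ and $\int_\bullet\gamma_\bullet(\alpha\beta_\bullet-\alpha_\bullet\beta)$. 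The factor $(d-1)$ in \eqref{eqn:comm h 1 exp} emerges directly from $e=d-1$ in \eqref{eqn:lqw 1} acting against the trivial factor in $c_2-c_1$, while the $h_{\alpha\beta}$-case is structurally parallel but produces no analogous $(d-1)\gamma$ contribution, reflecting the fact that $\alpha\wedge\beta$ annihilates $A^0(\Hilb_n)$.

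The main obstacle will be the combinatorial bookkeeping of the partition sums: both contributions $\fJ_0^{d-1}(\gamma)$ and $\fJ_0^{d-3}(\gamma c)$ produce double sums (over $k$ and over partitions of $\pm k$) with intertwined weights coming from \eqref{eqn:lqw} and from the Heisenberg factor in \eqref{eqn:lqw 1}. These must conspire, together with the relation $c^2=0$ and the Beauville--Voisin identities, to collapse into the compact right-hand sides. In particular, the prefactors of shape $s(\lambda)+n^2-2$ in \eqref{eqn:lqw} and the second piece $\fJ_0^{d-3}(\gamma c)$ — either of which could, a priori, produce leftover $\fG_{d-2}, \fG_{d-4}, \ldots$ terms — must cancel against each other; verifying this cancellation cleanly is the delicate technical heart of the proof, and is what allows the final answer to be written as a pure $\fG_d$ applied to a single class on $S$.
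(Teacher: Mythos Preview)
Your overall plan is sound and would work, but it diverges from the paper's organization in a way worth noting. The paper does \emph{not} go through \eqref{eqn:lqw 1} for this proposition. Instead it invokes a pre-computed commutation formula (quoted from \cite{Ob}):
\[
[h,\fq_{\lambda_1}\cdots\fq_{\lambda_k}(\Phi)] \;=\; \fq_{\lambda_1}\cdots\fq_{\lambda_k}(\overline{\Phi}),
\qquad
\overline{\Phi}=\sum_{i=1}^k\int_\bullet \Phi_{1\ldots i-1,\bullet,i+1\ldots k}(c_i-c_\bullet),
\]
and an analogous formula for $h_{\alpha\beta}$ with $(c_i-c_\bullet)$ replaced by $(\alpha_i\beta_\bullet-\alpha_\bullet\beta_i)$. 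Plugging in the partition expansion \eqref{eqn:lqw new} of $\fJ_0^d(\gamma)$ then reduces everything to a single surface-level identity: computing $\overline{\Delta_{1\ldots k}\gamma_1}$ and $\overline{\overline{\Delta_{1\ldots k}\gamma_1}}$ and showing each is again of the form $\Delta_{1\ldots k}\cdot(\text{new class})$. That is the content of the paper's Claims~\ref{claim:3} and~\ref{claim:4}, proved via Proposition~\ref{formulas}. Once one has $[h,\fJ_0^d(\gamma)]=\fJ_0^d\bigl(d\gamma+\int_*\gamma_*(c-c_*)\bigr)$ (and the $h_{\alpha\beta}$ analogue), the conversion to $\fG_d$ via \eqref{eqn:j to g} is immediate.

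Your route via \eqref{eqn:lqw 1} instead produces sums of mixed products $\fq_{\pm k}(\mu)\,\fJ_{\mp k}^{e-1}(\mu'\gamma)$, and the step you describe as ``Kronecker deltas collapse each partition sum'' is not quite what happens: no further Heisenberg commutators are needed, but the resulting products $\fq_k(\mu)\,\fq_\lambda(\Delta_{1\ldots e}\gamma')$ carry \emph{off-diagonal} insertions of the shape $\mu_i\cdot\Delta_{1\ldots\hat i\ldots e+1}\gamma'$, which are not individually terms of any $\fJ_0^e(\gamma'')$. Showing that their sum \emph{is} of that form is precisely the Beauville--Voisin computation in Claim~\ref{claim:3}; your approach would need to reprove that identity inside the argument rather than isolating it as a standalone surface lemma. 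So both routes rest on the same nontrivial ingredient, but the paper's organization separates the Lie-algebraic bookkeeping (handled once and for all by the $\overline{\Phi}$ formula) from the Beauville--Voisin geometry, whereas yours intermixes them.
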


Formula \eqref{eqn:comm h 1 exp} is the main thing we need to prove Theorem \ref{main}. However, to prove Theorem \ref{thm:ref}, we will also need the following more complicated version of the formulas above.

\begin{prop}
\label{prop:comm 2}

For any $d \geq 2$ and $\alpha \in A^1(S) \subset A^1(\eHilb)$, we have:
\begin{multline}\label{eqn:comm h 3 exp}
[h_{\alpha \delta},\fG_d(\gamma)] = -\fG_2(\alpha) \fG_{d-1}(\gamma) - \fG_2(1) \fG_{d-1} \left(\gamma \alpha\right) \\
-  \fG_{d+1} \left(\alpha \int_\bullet \gamma_\bullet + \int_\bullet \gamma_\bullet \alpha_\bullet \right) + 2\fG_{d-1} \left(\alpha \int_\bullet \gamma_\bullet c_\bullet \right)
\end{multline}
as operators $A^*(\eHilb) \rightarrow A^*(\eHilb)$ indexed by $\gamma \in A^*(S)$.

\end{prop}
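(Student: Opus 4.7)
My plan follows the same template as Proposition~\ref{prop:comm 1}. Using the inversion formula \eqref{eqn:j to g}, it suffices to compute the commutators $[h_{\alpha\delta}, \fJ_0^{d-1}(\gamma)]$ and $[h_{\alpha\delta}, \fJ_0^{d-3}(\gamma c)]$ and then re-express the result in $\fG$-language via \eqref{eqn:g to j}. The two $\fG_2 \cdot \fG_{d-1}$ products on the right-hand side of \eqref{eqn:comm h 3 exp} reflect the cubic nature of $h_{\alpha\delta}$, which forces the commutator to produce genuine products of operators in addition to single-$\fG$ terms.

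To execute the computation I would split $h_{\alpha\delta}$ along $\Delta_{12}(\alpha_1 + \alpha_3) = \Delta_{12}\alpha_1 + \Delta_{12}\alpha_3$. In the $\alpha_3$-piece, the class $\Delta_{12}$ restricts $\fq_i \fq_j$ to the small diagonal, so after summing over $i + j = -k$ the pair collapses to a Virasoro expression $2 \fL_{-k}$ acting on one factor of $S$, leaving $\fq_k(\alpha)$ on another; the $\alpha_1$-piece has a symmetric structure with $\alpha$ attached on the Virasoro side. Each resulting term takes the shape $\frac{1}{k} :\!\fL_{-k} \fq_k(\cdot)\!:$, whose commutator with $\fJ_0^{d-1}(\gamma)$ is evaluated using \eqref{eqn:lqw 1} and \eqref{eqn:lqw 2}. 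The key cancellation is that the factors of $k$ produced by those commutators absorb the $1/k$ weight, leaving finite sums over $k$ that telescope into products of $\fG$-type operators after a second application of \eqref{eqn:g to j}. The Beauville--Voisin identities from Proposition~\ref{formulas}, particularly \eqref{eqn:new identity 2} and \eqref{eqn:new identity 3}, are then invoked to simplify the resulting intersections of $\alpha$, $\gamma$, and $c$ on symmetric powers of $S$ into the clean form displayed on the right of \eqref{eqn:comm h 3 exp}.

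The main obstacle is bookkeeping: because $h_{\alpha\delta}$ is cubic and carries the $1/k$ weight, the naive expansion produces many terms indexed by ordered triples $(i,j,k)$ with $i + j + k = 0$, and normal-ordering must be tracked carefully when moving $\fq_{\pm k}$ across $\fJ_0^{d-1}$. A potentially cleaner alternative, which I would pursue if the direct route becomes unwieldy, exploits the Lie bracket identity $h_{\alpha\delta} = [\widetilde{f}_\alpha, e_\delta]$ in $\Fg_{\mathrm{NS}}$, a direct consequence of the wedge bracket formula applied to $\alpha \wedge f$ and $e \wedge \delta$. Since $e_\delta = \mult_\delta$ is pure multiplication we have $[e_\delta, \fG_d(\gamma)] = 0$, so the Jacobi identity gives
\[ [h_{\alpha\delta}, \fG_d(\gamma)] = -[e_\delta, [\widetilde{f}_\alpha, \fG_d(\gamma)]], \]
reducing the task to the quadratic commutator $[\widetilde{f}_\alpha, \fG_d(\gamma)]$ followed by a single cup product with $\delta$. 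Because $\widetilde{f}_\alpha$ is not itself a derivation on $A^\ast(\Hilb)$, the inner commutator contains genuinely operator-valued (non-multiplication) pieces, and it is precisely the bracket of these pieces with $\mult_\delta$ that reproduces the $\fG_2 \cdot \fG_{d-1}$ contributions in \eqref{eqn:comm h 3 exp}. Either route ultimately hinges on the Beauville--Voisin identities to collapse the final $S^k$-intersections.
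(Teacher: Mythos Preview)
Your main route is exactly the paper's: rewrite $h_{\alpha\delta}$ in the Virasoro form $\sum_{k\neq 0}\frac{1}{k}:\!\fL_k\fq_{-k}(\alpha_1+\alpha_2)\!:$, apply \eqref{eqn:lqw 1}--\eqref{eqn:lqw 2} to $\fJ_0^d(\gamma)$, then simplify with the Beauville--Voisin identities and convert via \eqref{eqn:g to j}. Be warned that the ``bookkeeping obstacle'' you flag is substantial: after the $1/k$ is absorbed you are left with infinite sums such as $\sum_k :\!\fL_k(x)\fJ^{d-1}_{-k}(y\gamma)\!:$ and $\sum_k :\!\fJ^d_k(x\gamma)\fq_{-k}(y)\!:$ which do not telescope directly. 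The paper spends four auxiliary claims and a lemma reorganising these as sums over partitions $\mu$ with $|\mu|=0$, isolating the cycles $A_k(\gamma)=\sum_{(x,y)}\sum_{i<j}\Delta_{1\ldots\hat i\ldots\hat j\ldots k}(x\gamma)_{\neq i,j}\Delta_{ij}y_i$ and $B_k(\gamma)=\sum_{(x,y)}\sum_i\Delta_{1\ldots\hat i\ldots k}(x\gamma)_{\neq i}y_i$, and proving the key identity $A_k(\gamma)-(k-2)B_k(\gamma)=\Delta_{1\ldots k}(\alpha_1\int_\bullet\gamma_\bullet+\int_\bullet\alpha_\bullet\gamma_\bullet)$ via \eqref{eqn:bv 6}; this is where the single-$\fG_{d+1}$ term ultimately comes from, not from any telescoping in $k$.

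Your alternative via $h_{\alpha\delta}=[\widetilde f_\alpha,e_\delta]$ and Jacobi is correct and is \emph{not} the paper's approach. It trades the cubic commutator for the quadratic one $[\widetilde f_\alpha,\fG_d(\gamma)]$, which is structurally parallel to Proposition~\ref{prop:comm 1} (same $\sum_k\frac{1}{k^2}\fq_k\fq_{-k}$ shape as $h$ and $h_{\alpha\beta}$) and should be tractable by the methods there; the subsequent bracket with $e_\delta=\fG_3(1)+2\fG_1(c)$ is then a single application of \eqref{eqn:lqw 1}. This route plausibly avoids the $A_k/B_k$ combinatorics altogether and could give a shorter proof, at the cost of first establishing a closed formula for $[\widetilde f_\alpha,\fG_d(\gamma)]$ that the paper never needs.
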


We separate the relations above into two different propositions, because the proof of the latter will be significantly more involved than the former. But before we dive into the proof, let us observe that according to our conventions, we are actually proving~\eqref{eqn:comm h 1 exp},~\eqref{eqn:comm h 2 exp}, and \eqref{eqn:comm h 3 exp} as the following identities:
\begin{gather}
\label{eqn:comm h 1}
(h \times \text{Id}_S) \circ \fG_d - \fG_d \circ h = (d-1) \fG_d + \pi_{2*} \left((c_1-c_2) \cdot \pi_1^*(\fG_d) \right) \\
\label{eqn:comm h 2}
(h_{\alpha \beta} \times \text{Id}_S) \circ \fG_d - \fG_d \circ h_{\alpha \beta} = \pi_{2*} \left((\alpha_1 \beta_2 - \alpha_2 \beta_1) \cdot \pi_1^*(\fG_d) \right)
\end{gather}
and:
\begin{multline}
(h_{\alpha \delta} \times \text{Id}_S) \circ \fG_d - \fG_d \circ h_{\alpha \delta} \\ = \pi_{2*} \Big[ {- (\alpha_1 + \alpha_2)} \cdot \pi_1^*(\fG_2) \pi_2^*(\fG_{d-1}) 
- (\alpha_1+\alpha_2) \cdot \pi_1^*(\fG_{d+1}) 
+ 2 \alpha_1 c_2 \cdot \pi_1^*(\fG_{d-1}) \Big]
\label{eqn:comm h 3}
\end{multline}
of operators $A^*(\Hilb) \rightarrow A^*(\Hilb \times S)$, where $\pi_i: \Hilb \times S \times S \rightarrow \Hilb \times S$ denotes the identity on $\Hilb$ times the projection onto the $i$-th factor of $S$. The reason why we prefer the language of \eqref{eqn:comm h 1 exp}, \eqref{eqn:comm h 2 exp}, and~\eqref{eqn:comm h 3 exp} over \eqref{eqn:comm h 1}, \eqref{eqn:comm h 2}, and \eqref{eqn:comm h 3} is simply to keep the explanation legible.

\subsection{} The reader who is willing to accept Propositions \ref{prop:comm 1} and \ref{prop:comm 2}, and wishes to see how they lead to Theorems \ref{main} and \ref{thm:ref}, may skip to Section \ref{sub:skip}. 

\begin{proof} [Proof of Proposition \ref{prop:comm 1}] We will start with \eqref{eqn:comm h 1 exp}. From a straightforward calculation (see \cite[Lemma 3.4]{Ob}), one obtains the commutation relations:
\begin{equation}
\label{eqn:comm formula 1}
\left[ h, \fq_{\lambda_1} ... \fq_{\lambda_k}(\Phi) \right] = \fq_{\lambda_1} ... \fq_{\lambda_k}(\overline{\Phi})
\end{equation}
for all $\Phi \in A^{\ast}(S^k)$, where we write:
\begin{equation}
\label{eqn:bar 1}
\overline{\Phi} = \sum_{i=1}^k \int_\bullet \underbrace{\Phi_{1 ... i-1,\bullet,i+1 ... k}(c_i - c_\bullet)}_{\text{this class lies in }A^*(S^k \times S)}
\end{equation}
with the last factor in $S^k \times S$ represented by the index $\bullet$.  We have:
\begin{align}
\left[ h, \fJ_0^d(\gamma) \right] & \stackrel{\eqref{eqn:lqw new}}{=} d! \left(- \sum_{|\lambda| = 0, l(\lambda) = d+1} \frac 1{\lambda!}  \cdot [h,\fq_\lambda(\Delta_{1...d+1}\gamma_1)] \right. \\
& \quad\quad{} + \left. \sum_{|\lambda| = 0, l(\lambda) = d-1} \frac {s(\lambda)-2}{\lambda!} \cdot [h,\fq_\lambda(\Delta_{1...d-1}\gamma_1c_1)] \right) \\
& \stackrel{\eqref{eqn:comm formula 1}}{=} d! \left(- \sum_{|\lambda| = 0, l(\lambda) = d+1} \frac 1{\lambda!}  \cdot \fq_\lambda(\overline{\Delta_{1... d+1}\gamma_1}) \right. \label{eqn:temp}\\
& \quad\quad{} + \left. \sum_{|\lambda| = 0, l(\lambda) = d-1} \frac {s(\lambda)-2}{\lambda!} \cdot \fq_\lambda(\overline{\Delta_{1...d-1}\gamma_1c_1}) \right).
\end{align}
To evaluate the expression above, we will need the result below:

\begin{claim}
\label{claim:3}
	
For any $k>0$, $l \geq 0$ and any $\gamma \in A^*(S \times S^l)$, we have:
\begin{equation}
\label{eqn:claim 3}
\overline{\Delta_{1...k} \gamma_1} = \Delta_{1...k} \left[ (k-1) \gamma_{1} + \int_* \gamma_{*} (c_1 - c_*) \right].
\end{equation}
When writing $\gamma_*$, the index $*$ refers to the first factor of $\gamma \in A^*(S \times S^l)$. The other~$l$ factors of $\gamma$ are not involved in the formula above, as the bar notation is defined as in \eqref{eqn:bar 1} with respect to the indices $1,...,k$ only.
	
\end{claim}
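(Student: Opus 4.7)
The plan is to compute both sides explicitly using the definition of the bar operation and then match terms by applying Proposition~\ref{formulas} and the Beauville--Voisin relations.

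Unwinding the definition \eqref{eqn:bar 1}: for $i = 1$, substituting index $1$ by $\bullet$ in $\Delta_{1 \ldots k} \gamma_1$ yields $\Delta_{\bullet 2 \ldots k} \gamma_\bullet$; since $c_\bullet$ and $\gamma_\bullet$ restrict to $c_2$ and $\gamma_2$ on this sub-diagonal, push-forward along the isomorphism $\Delta_{\bullet 2 \ldots k} \to \Delta_{2 \ldots k}$ gives the contribution $(c_1 - c_2)\gamma_2 \Delta_{2 \ldots k}$. For $i \geq 2$, the substituted class becomes $\Delta_{1 \ldots i-1, \bullet, i+1 \ldots k} \gamma_1$, on which $c_\bullet$ restricts to $c_1$, yielding $(c_i - c_1)\gamma_1 \Delta_{1, \ldots, \hat i, \ldots, k}$. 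Summing,
\[
\overline{\Delta_{1 \ldots k} \gamma_1} = (c_1 - c_2)\gamma_2 \Delta_{2 \ldots k} + \sum_{i=2}^{k} (c_i - c_1)\gamma_1 \Delta_{1, \ldots, \hat i, \ldots, k}.
\]

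Next I would apply \eqref{eqn:new identity 3} to $\gamma_2 \Delta_{2 \ldots k}$ (with index set $\{2,\ldots,k\}$) and to each $\gamma_1 \Delta_{1, \ldots, \hat i, \ldots, k}$, expanding them into combinations of $\gamma_j \prod_{l \neq j} c_l$, a partial diagonal times $B$, and $c_1 \cdots c_k \cdot A$, where $A = \int_* \gamma_*$ and $B = \int_* \gamma_* c_*$. After multiplying by the outer factor $(c_i - c_j)$ and simplifying using $c_j^2 = 0$, the relation $c_1 \cdot \Delta_{1, \ldots, \hat i, \ldots, k} = \prod_{l \neq i} c_l$ (a short induction from \eqref{eqn:bv 3}), and the identity $(\gamma c)_1 = c_1 B$, every piece reduces to a $\BQ$-linear combination of the four basic monomials $\gamma_i \prod_{l \neq i} c_l$, $\prod_{l \neq i} c_l \cdot B$, $\Delta_{1 \ldots k} \cdot B$, and $c_1 \cdots c_k \cdot A$.

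Summing over $i$ then requires the auxiliary identity
\[
\sum_{i=1}^{k} c_i \Delta_{1, \ldots, \hat i, \ldots, k} = (k-2)\Delta_{1 \ldots k} + \sum_{i=1}^{k} \prod_{l \neq i} c_l,
\]
which follows by applying \eqref{eqn:bv 6} to each $\Delta_{1, \ldots, \hat i, \ldots, k}$, multiplying by $c_i$, summing over $i$, and re-applying \eqref{eqn:bv 6} in reverse to recognize $\Delta_{1 \ldots k}$. Comparing the resulting coefficients of the four monomials on the left-hand side with those obtained by expanding $\Delta_{1 \ldots k}[(k-1)\gamma_1 + c_1 A - B]$ via \eqref{eqn:new identity 3} once more then yields the desired equality. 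The main obstacle is purely the combinatorial bookkeeping---keeping track of which of the four monomials each intermediate term contributes to after the cascade of cancellations from $c_j^2 = 0$ and $c \alpha = 0$ in $A^*(S)$.
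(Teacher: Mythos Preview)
Your approach is correct and essentially the same as the paper's: both expand the bar via \eqref{eqn:new identity 3}, reduce to the same four basic monomials, and invoke the identical corollary of \eqref{eqn:bv 6}, namely $\sum_i c_i \Delta_{1...\hat{i}...k} = (k-2)\Delta_{1...k} + \sum_i \prod_{l \neq i} c_l$. The only difference is the order of two steps---you first use the diagonal to replace $c_\bullet, \gamma_\bullet$ by $c_1, \gamma_1$ (or $c_2, \gamma_2$) and push forward the factor $\bullet$ to land in $A^*(S^k)$, and only then apply \eqref{eqn:new identity 3} to $(k-1)$-fold diagonals, whereas the paper applies \eqref{eqn:new identity 3} directly to $\gamma_\bullet \Delta_{1...i-1,\bullet,i+1...k}$ with $\bullet$ still present and integrates afterward; this reordering is harmless and arguably streamlines the bookkeeping.
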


\begin{proof} In the sequel, we let $\bullet$ and $*$ denote two different copies of the surface $S$ which will be integrated out. 
Using \eqref{eqn:new identity 3}, the left hand side of \eqref{eqn:claim 3} equals
\begin{multline*}
\sum_{i=1}^{k} \int_\bullet \Delta_{1... i-1,\bullet,i+1 ... k} \gamma_\bullet (c_i - c_\bullet) 
=
\left[ \sum_{i=1}^k \sum_{j \in \{1,...,k,\bullet\} - \{i\}} \int_\bullet \gamma_j \prod_{x \neq i,j} c_x(c_i-c_\bullet) \right] \\
+ \left[ \sum_{i=1}^k \int_\bullet \left(\Delta_{1 ... i-1,\bullet,i+1 ... k} - \sum_{j \in \{1,...,k,\bullet\} - \{i\}} \prod_{x \neq i,j}c_x \right)(c_i-c_\bullet) \int_* \gamma_* c_*  \right] \\
- \left[ \sum_{i=1}^k (k-1) \int_\bullet c_1 ... c_{i-1} c_\bullet c_{i+1} ... c_k (c_i-c_\bullet)\int_* \gamma_* \right].
\end{multline*}
A straightforward calculation using formula \eqref{eqn:new identity 1} shows that the three square brackets above are equal to the corresponding three square brackets below (one needs to integrate out the factor denoted by $\bullet$):
\begin{multline}\label{eqn:aux 1}
\left[ (k-1) \sum_{i = 1}^k \gamma_i \prod_{j \neq i} c_j + k c_1 ... c_k \int_* \gamma_* - \sum_{i=1}^k \prod_{j \neq i} c_j \int_* \gamma_* c_*  \right] \\
+ \left[ \sum_{i=1}^k \left(\Delta_{1... \hat{i} ... k}c_i - (k-1) \prod_{j \neq i} c_j \right) \int_* \gamma_* c_* \right] - \left[ k(k-1) c_1 ... c_k \int_* \gamma_* \right].
\end{multline}
Using the following corollary of \eqref{eqn:bv 6}:
$$
\sum_{i=1}^k \Delta_{1...\hat{i}...k} c_i = (k-2) \Delta_{1...k} + \sum_{i=1}^k c_1 ... c_{i-1} c_{i+1} ... c_k
$$
we may rearrange \eqref{eqn:aux 1} as:
\begin{multline*}
(k-1) \sum_{i = 1}^k \gamma_i \prod_{j \neq i} c_j - \left[ k(k-2) c_1 ... c_k \right] \int_* \gamma_* \\
+ \left[(k-2) \Delta_{1...k} - (k-1) \sum_{i=1}^k \prod_{j \neq i} c_j \right] \int_* \gamma_* c_*.
\end{multline*}
Using formulas \eqref{eqn:new identity 1} and \eqref{eqn:new identity 3}, one recognizes that the expression above equals:
$$
\Delta_{1...k} \left[ (k-1) \gamma_{1} + \int_* \gamma_{*} (c_1 - c_*) \right]
$$
thus establishing formula \eqref{eqn:claim 3}. 
\end{proof}

Relation \eqref{eqn:temp} together with
the following immediate consequence of \eqref{eqn:new identity 1}:
$$
-2 \gamma c+ \int_* \gamma_*  c_*(c-c_*)
= \int_* \gamma_* c (c-c_*)
$$
implies that:
\begin{equation} \label{eqn:temp 2}
[h,\fJ_0^d(\gamma)] = \fJ_0^d \left(d\gamma+\int_* \gamma_*(c-c_*)\right).
\end{equation}
Together with \eqref{eqn:j to g}, formula \eqref{eqn:temp 2} implies \eqref{eqn:comm h 1 exp}.

In order to prove \eqref{eqn:comm h 2 exp}, we will recycle the argument above. By analogy with \eqref{eqn:comm formula 1}, we have:
\begin{equation}
\label{eqn:comm formula 2}
\left[h_{\alpha \beta},\fq_{\lambda_1} ... \fq_{\lambda_k}(\Phi) \right] = \fq_{\lambda_1} ... \fq_{\lambda_k}(\overline{\overline{\Phi}}) 
\end{equation}
for all $\Phi \in A^*(S^k)$, where we write:
\begin{equation}
\label{eqn:bar 2}
\overline{\overline{\Phi}} = \sum_{i=1}^k \int_\bullet \underbrace{\Phi_{1...i-1,\bullet,i+1...k}(\alpha_i \beta_\bullet - \alpha_\bullet \beta_i)}_{\text{this class lies in }A^*(S^k \times S)}
\end{equation}
where the last factor in $S^k \times S$ is the one represented by the index $\bullet$. 

\begin{claim}
\label{claim:4}
	
For any $k>0$, $l \geq 0$ and any $\gamma \in A^*(S \times S^l)$, we have:
\begin{equation}
\label{eqn:claim 4}
\overline{\overline{\Delta_{1 ... k} \gamma_1}} = \Delta_{1...k} \int_* \gamma_{*} (\alpha_1 \beta_* - \alpha_* \beta_1).
\end{equation}
When writing $\gamma_*$, the index $*$ refers to the first factor of $\gamma \in A^*(S \times S^l)$. The other~$l$ factors of $\gamma$ are not involved in the formula above, as the double bar notation is defined as in \eqref{eqn:bar 2} with respect to the indices $1,...,k$ only.
	
\end{claim}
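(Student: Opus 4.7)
My plan is to mimic the proof of Claim \ref{claim:3}, adapting to the antisymmetric combination $\alpha_i\beta_\bullet - \alpha_\bullet\beta_i$ in place of $c_i - c_\bullet$. The first step is to integrate out $\bullet$ term by term. For each $i$, the diagonal $\Delta_{1,\ldots,i-1,\bullet,i+1,\ldots,k}$ identifies the indices in $\{1,\ldots,\hat i,\ldots,k\}\cup\{\bullet\}$, so the pushforward $\int_\bullet$ produces $\Delta_{1,\ldots,\hat i,\ldots,k}$ after substituting $\bullet$ by any fixed index $j(i)\in\{1,\ldots,k\}\setminus\{i\}$; the choice is immaterial since all non-$i$ indices are identified on this diagonal. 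The LHS thus becomes
\[
\sum_{i=1}^k \Delta_{1,\ldots,\hat{i},\ldots,k}\bigl[\alpha_i(\gamma\beta)_{j(i)} - \beta_i(\gamma\alpha)_{j(i)}\bigr].
\]

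To match this against $\Delta_{1\ldots k}\int_*\gamma_*(\alpha_1\beta_* - \alpha_*\beta_1)$, I decompose $\gamma$ via K\"unneth as $\gamma^{(0)}\otimes\gamma^{(1)}$ with $\gamma^{(0)}\in A^d(S)$. For $d\in\{0,2\}$ both sides vanish: the integrals $\int_*\gamma_*\beta_*$ and $\int_*\gamma_*\alpha_*$ are zero by degree reasons, while the surviving $\alpha$-$\beta$ sums on the LHS collapse by pairwise cancellation (for $d=0$) or vanish termwise using $\gamma^{(0)}\alpha = \gamma^{(0)}\beta = 0$ (for $d=2$). The substantive case is $d=1$, where \eqref{eqn:bv 2} gives $\gamma^{(0)}\beta = (\gamma^{(0)},\beta)c$ and similarly for $\alpha$, so that $(\gamma\beta)_{j(i)} = (\gamma^{(0)},\beta)\gamma^{(1)}c_{j(i)}$ and $\int_*\gamma_*\beta_* = (\gamma^{(0)},\beta)\gamma^{(1)}$. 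Both sides then reduce to the key identity
\[
\sum_{i=1}^k \Delta_{1,\ldots,\hat i,\ldots,k}\alpha_i c_{j(i)} = \Delta_{1\ldots k}\alpha_1
\]
and its analogue with $\alpha$ replaced by $\beta$.

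This identity follows from $\Delta_{\{a_1,\ldots,a_m\}}c_{a_1} = c_{a_1}\cdots c_{a_m}$, a consequence of iterating \eqref{eqn:bv 3} together with $c\cdot c = 0$ in $A^*(S)$, combined with the specialization of \eqref{eqn:new identity 3} to $\gamma=\alpha$, in which the terms involving $\int\alpha$ and $\int c\alpha$ both vanish on degree grounds. The main obstacle I anticipate is verifying the $d=0$ cancellation for general $k$: each term in $\sum_i\Delta_{1,\ldots,\hat i,\ldots,k}(\alpha_i\beta_{j(i)} - \beta_i\alpha_{j(i)})$ is nonzero, but after applying \eqref{eqn:bv 4} to expand each diagonal factor containing $\alpha$ or $\beta$ at a single non-diagonal index, the coefficients of each monomial $\alpha_a\beta_b \prod c_\ell$ cancel in pairs across different values of $i$ by the antisymmetry, generalizing the $k=2$ pattern $(\alpha_1\beta_2 - \alpha_2\beta_1) + (\alpha_2\beta_1 - \alpha_1\beta_2) = 0$.
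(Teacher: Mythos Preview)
Your first step --- integrating out $\bullet$ directly via the diagonal constraint to obtain
\[
\sum_{i=1}^k \Delta_{1,\ldots,\hat i,\ldots,k}\bigl[\alpha_i(\gamma\beta)_{j(i)} - \beta_i(\gamma\alpha)_{j(i)}\bigr]
\]
is correct, and takes a different route from the paper, which instead expands $\Delta_{1\ldots i-1,\bullet,i+1\ldots k}\gamma_\bullet$ via \eqref{eqn:new identity 3} \emph{before} integrating out $\bullet$.

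The gap is the K\"unneth step. You write ``decompose $\gamma$ via K\"unneth as $\gamma^{(0)}\otimes\gamma^{(1)}$'', but the external product map $A^*(S)\otimes A^*(S^l)\to A^*(S\times S^l)$ is not surjective for Chow groups, so an arbitrary $\gamma$ cannot be written as a sum of decomposable tensors. Your degree-by-degree case analysis therefore does not establish the identity for general $\gamma$; it only treats classes in the image of the external product.

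The fix is immediate and stays entirely within your framework: replace the K\"unneth splitting by \eqref{eqn:new identity 2}, which holds for arbitrary $\gamma\in A^*(S\times S^l)$ and gives
\[
(\gamma\beta)_{j(i)} = c_{j(i)}\int_*\gamma_*\beta_* + \beta_{j(i)}\int_*\gamma_*c_*,
\]
and likewise for $(\gamma\alpha)_{j(i)}$. After substitution, the terms carrying the coefficient $\int_*\gamma_*c_*$ assemble into the antisymmetric sum $\sum_i\Delta_{1\ldots\hat i\ldots k}(\alpha_i\beta_{j(i)}-\beta_i\alpha_{j(i)})$, which vanishes by exactly the pairwise cancellation you describe for your ``$d=0$'' case. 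The remaining terms, carrying $\int_*\gamma_*\beta_*$ and $\int_*\gamma_*\alpha_*$, match the right-hand side via your key identity $\sum_i\Delta_{1\ldots\hat i\ldots k}\alpha_ic_{j(i)}=\Delta_{1\ldots k}\alpha_1$, just as in your ``$d=1$'' case. So all of your computational ingredients are sound; only the organizing principle needs to change from K\"unneth to \eqref{eqn:new identity 2}.
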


\begin{proof} In the sequel, we let $\bullet$ and $*$ denote two different copies of the surface $S$ which will be integrated out. By definition, the left hand side of \eqref{eqn:claim 4} equals:
\begin{multline*}
\sum_{i=1}^{k} \int_\bullet \Delta_{1... i-1,\bullet,i+1 ... k} \gamma_\bullet (\alpha_i \beta_\bullet - \alpha_\bullet \beta_i) \\
\stackrel{\eqref{eqn:new identity 3}}={} \left[ \sum_{i=1}^k \sum_{j \in \{1,...,k,\bullet\} - \{i\}} \int_\bullet \gamma_j \prod_{x \neq i,j} c_x(\alpha_i \beta_\bullet - \alpha_\bullet \beta_i) \right] \\
+ \left[ \sum_{i=1}^k \int_\bullet \left(\Delta_{1 ... i-1,\bullet,i+1 ... k} - \sum_{j \in \{1,...,k,\bullet\} - \{i\}} \prod_{x \neq i,j}c_x \right)(\alpha_i \beta_\bullet - \alpha_\bullet \beta_i) \int_* c_* \gamma_* \right] \\
- \left[ \sum_{i=1}^k (k-1) \int_\bullet c_1 ... c_{i-1} c_\bullet c_{i+1} ... c_k (\alpha_i \beta_\bullet - \alpha_\bullet \beta_i) \int_* \gamma_* \right].
\end{multline*}
One can apply \eqref{eqn:new identity 2} to compute the square brackets above (integrate out the factor of $S$ denoted by $\bullet$), and obtain:
$$
= \left[ \sum_{i=1}^k \prod_{j \neq i} c_j \left(\alpha_i \int_* \gamma_* \beta_* - \beta_i \int_* \gamma_* \alpha_* \right) \right] + \left[ 0 \right] + \left[ 0 \right].
$$	
Formula \eqref{eqn:bv 4} shows that the right-hand side of the formula above is equal to:
$$
\Delta_{1...k} \int_* \gamma_{*} (\alpha_1 \beta_* - \alpha_* \beta_1)
$$
which establishes \eqref{eqn:claim 4}. 
\end{proof}

The analogue of formula \eqref{eqn:temp} holds with $h$ replaced by $h_{\alpha \beta}$ and the bar replaced by a double bar, hence Claim \ref{claim:4} implies the following analogue of formula \eqref{eqn:temp 2}:
\begin{equation}
\label{eqn:temp 3}
[h_{\alpha \beta}, \fJ_0^d(\gamma)] = \fJ_0^d \left( \int_\bullet \gamma_{\bullet} (\alpha \beta_\bullet - \alpha_\bullet \beta) \right).
\end{equation}
Together with \eqref{eqn:j to g}, this implies \eqref{eqn:comm h 2 exp}. 
\end{proof} 

\subsection{\it Proof of Proposition \ref{prop:comm 2}.} If we recall the definition of the operators of $\fL_k$ in~\eqref{eqn:virasoro}, then formula \eqref{eqn:def h ad} takes the form:
\begin{equation}
\label{eqn:delta}
h_{\alpha \delta} = \sum_{k \neq 0} \frac 1k :\!\fL_k \fq_{-k} (\alpha_1+\alpha_2)\!: \,\,= \sum_{(x,y) \in \{(\alpha,1), (1,\alpha)\}} \sum_{k \neq 0} \frac 1k :\!\fL_k(x) \fq_{-k} (y)\!: \,.
\end{equation}
We may invoke \eqref{eqn:lqw 1} and \eqref{eqn:lqw 2} to obtain:
\begin{multline*}
[h_{\alpha \delta}, \fJ_0^d(\gamma)] = d \sum_{k \neq 0} \sum_{(x,y) \in \{(\alpha,1), (1,\alpha)\}} \Big[ - :\!\fL_k(x) \fJ_{-k}^{d-1}(y \gamma)\!: \\
+ :\!\fJ_{k}^{d}(x \gamma) \fq_{-k}(y)\!: {}+  2(d-1)(k^2-1):\!\fJ_{k}^{d-2} \left( x c\gamma \right) \fq_{-k}(y)\!: \Big].
\end{multline*}
In the normal ordered products above, we put $\fq_k$ and $\fL_k$ to the left of the expression if $k>0$ and to the right of the expression if $k<0$. Since $\fq_0 = 0$ but $\fL_0 = - \fJ_0^1$, we may rewrite the formula above as:  
\begin{multline}\label{eqn:aux 2}
[h_{\alpha \delta}, \fJ_0^d(\gamma)] = -d \fJ_0^1(\alpha) \fJ_0^{d-1}(\gamma) - d \fJ_0^1(1) \fJ_0^{d-1}(\gamma \alpha) \\
+ d \sum_{k \in \BZ} \sum_{(x,y) \in \{(\alpha,1), (1,\alpha)\}} \Big[ - :\!\fL_k(y) \fJ_{-k}^{d-1}(x \gamma)\!: + :\!\fJ_{k}^{d}(x \gamma) \fq_{-k}(y)\!: \\
+ 2(d-1)(k^2-1):\!\fJ_{k}^{d-2} \left( x c\gamma \right) \fq_{-k}(y)\!: \Big].
\end{multline}
Let us compute the formulas on the second and third lines of the formula above. 

\begin{claim}
\label{claim:1}

We have the following formulas:
\begin{multline}\label{eqn:claim 1}
\sum_{k \in \BZ} \sum_{|\lambda| = k, l(\lambda) = d+1} \frac 1{\lambda!} :\! \fq_\lambda (\Delta_{1...d+1} (x\gamma)_1) \fq_{-k}(y)\!: \\
= \sum^{|\mu| = 0}_{l(\mu) = d+2} \frac 1{\mu!} \cdot \fq_\mu \left( \sum_{i=1}^{d+2} \Delta_{1...\hat{i}...d+2} (x\gamma)_{\neq i} y_i \right) 
\end{multline}
and:
\begin{multline}\label{eqn:claim 11}
\sum_{k \in \BZ} \sum_{|\lambda| = k, l(\lambda) = d-1} \frac {s(\lambda) + k^2-2}{\lambda!} :\!\fq_\lambda(\Delta_{1...d-1} (x\gamma c)_1)\fq_{-k}(y)\!: \\
= \sum_{|\mu| = 0, l(\lambda) = d} \frac {s(\mu) -2}{\mu!} \fq_\mu \left( \sum_{i=1}^{d} \Delta_{1...\hat{i}...d} (x\gamma c)_{\neq i} y_i \right) 
\end{multline} 
where $\Delta_{1...\hat{i}...d}x_{\neq i}$ refers to $\Delta_{1...\hat{i}...d} x_j$ for any $j \neq i$.

\end{claim}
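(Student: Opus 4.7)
The plan is to prove both \eqref{eqn:claim 1} and \eqref{eqn:claim 11} simultaneously by reindexing the left-hand sides by a single partition $\mu$ with $|\mu| = 0$ and $l(\mu) = d+2$ (resp.~$d$), where $(\lambda, k)$ corresponds to $\mu = \lambda \sqcup \{-k\}$. For each such $\mu$, the pairs $(\lambda, k)$ contributing to the LHS are in bijection with the distinct values of $k \in \BZ$ such that $-k$ is a part of $\mu$; for each such $k$, $\lambda$ is obtained by removing one copy of $-k$ from $\mu$. The elementary identity $\mu!/\lambda! = m_{-k}(\mu)$ then converts the weight $1/\lambda!$ on the LHS into $m_{-k}(\mu)/\mu!$. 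For \eqref{eqn:claim 11} one further uses $s(\mu) = s(\lambda) + k^2$ to recognize $s(\lambda) + k^2 - 2 = s(\mu) - 2$, so that the remaining coefficient becomes a function of $\mu$ alone.

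Next I would identify the target of $\fq_\mu$ after normal ordering: each term $:\!\fq_\lambda(\Delta_{1...d+1}(x\gamma)_1)\,\fq_{-k}(y)\!:$ equals $\fq_\mu$ applied to the class $\Delta_{1 \ldots \hat{i} \ldots d+2}\,(x\gamma)_{\neq i}\, y_i$, where $i$ is a position in the sorted sequence $\mu$ with $\mu_i = -k$. There are exactly $m_{-k}(\mu)$ such positions, and they are interchangeable because the $m_{-k}(\mu)$ copies of $\fq_{-k}$ in $\fq_\mu$ commute (the commutator $[\fq_{-k}, \fq_{-k}]$ vanishes by \eqref{eqn:heis} since $k \neq 0$). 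Consequently, the factor $m_{-k}(\mu)$ produced by the reindexing is precisely what is needed to sum over all $i \in \{1, \ldots, l(\mu)\}$: each value of $k$ with $m_{-k}(\mu) \geq 1$ accounts for $m_{-k}(\mu)$ positions $i$ satisfying $\mu_i = -k$, and together these exhaust the $l(\mu)$ positions. This reproduces the RHS with the prefactor $1/\mu!$.

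The main obstacle is the bookkeeping for how classes attach to specific factors of $S^{l(\mu)}$ under normal ordering: one must verify that placing $y$ on the $\fq_{-k}$-factor of $:\!\fq_\lambda\,\fq_{-k}\!:$ is the same, after normal ordering, as placing $y$ at any of the $-k$-positions in $\fq_\mu$. This follows from the manifest symmetry of the class $\Delta_{1\ldots \hat{i}\ldots l(\mu)}\,(x\gamma)_{\neq i}$ under permutations of the $l(\mu) - 1$ indices not equal to $i$, combined with the commutativity of the $\fq_{-k}$-block inside $\fq_\mu$. Once this matching is in place, the two identities reduce to the combinatorial reorganization of the first paragraph, with the weights $m_{-k}(\mu)/\mu!$ summing correctly to the $1/\mu!$ coefficient of the sum over $i$ on the right.
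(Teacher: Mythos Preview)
Your proposal is correct and follows essentially the same combinatorial argument as the paper: both match the LHS and RHS via the bijection $(\lambda,k)\mapsto\mu=\lambda\sqcup\{-k\}$ together with the multiplicity factor $\mu!/\lambda!=m_{-k}(\mu)$, and for \eqref{eqn:claim 11} the observation $s(\lambda)+k^2=s(\mu)$. The paper phrases this in the reverse direction---starting from the RHS and, for each $\mu$, picking one of the $m_{-k}$ copies of $\fq_{-k}$ to carry the insertion $y$---but the content is the same; your treatment of the normal-ordering bookkeeping is in fact more explicit than the paper's.
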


\begin{proof} The right-hand side of \eqref{eqn:claim 1} is equal to:
$$
\sum_{\mu = (...,(-2)^{m_{-2}},(-1)^{m_{-1}},1^{m_1},2^{m_2},...)} \frac {... \fq_{-k}^{m_{-k}} ...}{... m_{-k}! ...}  \left(\sum_{i=1}^{l(\mu)} y_i \Delta_{...\hat{i}...}(x\gamma)_{\neq i} \right).
$$
In each summand above, we can pick a copy of $\fq_{-m_k}$ in $m_{-k}$ ways for any $k$, and assign to that copy the insertion $y_i$, and to all other copies the insertion $\Delta_{...\hat{i}...} (x\gamma)_{\neq i}$. The corresponding sum will be term-wise equal to the left-hand side of \eqref{eqn:claim 1}. Formula~\eqref{eqn:claim 11} is proved analogously, so we leave it to the interested reader.
\end{proof} 

\begin{claim}
\label{claim:2}

We have the following formulas:
\begin{multline}\label{eqn:claim 2}
\sum_{k \in \BZ} \sum_{|\lambda| = - k, l(\lambda) = d} \frac 1{\lambda!} :\!\fL_k(y) \fq_\lambda(\Delta_{1...d}(x\gamma)_1)\!: \\
= \sum_{|\mu| = 0, l(\mu) = d+2} \frac 1{\mu!}  \fq_\mu \left(\sum_{1 \leq i < j \leq d+2} \Delta_{1...\hat{i} ... \hat{j} ... d+2} (x\gamma)_{\neq i,j} \Delta_{ij} y_i  \right) \\
- \sum_{|\mu| = 0, l(\mu) = d} \frac {s(\mu)}{2\mu!} \fq_\mu \left(\Delta_{1...d} (xy\gamma)_1 \right)
\end{multline}
and:
\begin{multline}\label{eqn:claim 22}
\sum_{k \in \BZ} \sum_{|\lambda| = - k,l(\lambda) = d-2} \frac {s(\lambda) + k^2-2}{\lambda!} :\!\fL_k(y) \fq_\lambda(\Delta_{1...d-2} (xc\gamma)_1)\!: \\
= \sum_{|\mu| = 0, l(\mu) = d} \frac {s(\mu) -2}{\mu!}  \fq_\mu \left(\sum_{1 \leq i < j \leq d} \Delta_{1...\hat{i} ... \hat{j} ... d} (xc\gamma)_{\neq i,j} \Delta_{ij} y_i  \right) \\
+ \sum_{i,j \in \BZ} \sum_{|\lambda| = - i-j, l(\lambda) = d-2} \frac {ij}{\lambda!} :\!\fq_i \fq_j(\Delta_{12} y_1 ) \fq_\lambda(\Delta_{1...d-2} (xc\gamma)_1)\!: \,.
\end{multline}

\end{claim}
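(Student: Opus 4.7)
The plan is to expand the Virasoro operator $\fL_k(y)$ via \eqref{eqn:virasoro} as $\fL_k(y) = \tfrac12 \sum_{i+j=k} \fq_i \fq_j(\Delta_\ast y)$, substitute into the left-hand sides of \eqref{eqn:claim 2} and \eqref{eqn:claim 22}, and reorganize the resulting five-fold sum over $(k,\lambda,i,j)$ by the partition $\mu$ obtained after bringing all Nakajima operators into fully normal-ordered form via the Heisenberg commutation relations \eqref{eqn:heis op}. The uncontracted contributions of this Wick-type expansion will account for the first summands on each RHS, while the single-contraction contributions (pairing a Virasoro operator with one of the $\fq_{\lambda_r}$'s) will account for the remaining correction summands.

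For \eqref{eqn:claim 2}, I would first treat the uncontracted contribution: each summand becomes a normal-ordered product of $d+2$ Nakajima operators on the combined insertion $\Delta_{12}y_1 \cdot \Delta_{3\ldots d+2}(x\gamma)_3$. Collecting by sorted $\mu$ and applying a combinatorial bookkeeping argument parallel to the proof of Claim~\ref{claim:1} --- the prefactor $\tfrac{1}{2\lambda!}$ together with the $\tfrac12$ from $\fL_k$ and the two orderings $(i,j)\leftrightarrow(j,i)$ merging into $\tfrac{1}{\mu!}$, and the unordered pair of positions of $\mu$ singled out as the Virasoro pair producing the insertion pattern in the first RHS summand --- gives exactly that summand. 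Turning to single contractions, each pairs one Virasoro operator with a $\fq_{\lambda_r}$ via \eqref{eqn:heis op}; the resulting Heisenberg scalar combined with the Virasoro diagonal $\Delta_{12}y_1$ absorbs $y$ onto the remaining $\fq_\lambda$ insertion (via the identities of Proposition~\ref{formulas}), producing a length-$d$ term proportional to $\fq_\mu(\Delta_{1\ldots d}(xy\gamma)_1)$. Summation over $(k,\lambda,i,j,r)$ yields the coefficient $-\tfrac{s(\mu)}{2\mu!}$, the weight $s(\mu) = \sum i^2 m_i$ emerging because each contraction scalar is proportional to an index in $\mu$ and must be summed over the $m_i$ positions of that index.

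Formula \eqref{eqn:claim 22} follows by the identical procedure applied to the insertion $\Delta_{1\ldots d-2}(xc\gamma)_1$ with coefficient $\tfrac{s(\lambda)+k^2-2}{\lambda!}$. Using $k^2 = (i+j)^2 = i^2 + j^2 + 2ij$ and $s(\mu) = s(\lambda)+i^2+j^2$, one splits $\tfrac{s(\lambda)+k^2-2}{\lambda!} = \tfrac{s(\mu)-2}{\lambda!} + \tfrac{2ij}{\lambda!}$; the first piece, after the uncontracted analysis above, yields the first summand $\tfrac{s(\mu)-2}{\mu!}\fq_\mu(\ldots)$ on the RHS, while the second piece yields the explicit second summand $\sum_{i,j}\tfrac{ij}{\lambda!}:\!\fq_i\fq_j(\Delta_{12}y_1)\fq_\lambda(\Delta_{1\ldots d-2}(xc\gamma)_1)\!:$ after cancellation of $\tfrac12\cdot 2$ from the Virasoro and pair orderings. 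The presence of the $c$-factor in the insertion combined with $c^2=0$ and the Beauville--Voisin identities \eqref{eqn:bv 3}--\eqref{eqn:bv 5} ensures that single contractions either vanish outright or are already absorbed into the two displayed summands, with no further simplification needed. The hardest part of the argument will be precisely this combinatorial bookkeeping: accurately tracking multiplicities in $\mu$ versus $\lambda$, reconciling the $\tfrac12$ from $\fL_k$ with the two orderings of the Virasoro pair (and its coincident-index subtle case), and verifying via the identities of Proposition~\ref{formulas} that the diagonal manipulations from the absorbed contractions yield exactly the stated closed forms with no stray lower-length contributions remaining.
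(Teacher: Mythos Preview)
Your approach is essentially the paper's: expand $\fL_k$ via \eqref{eqn:virasoro}, match the uncontracted terms with the first RHS summand by the bookkeeping of Claim~\ref{claim:1}, and recover the correction terms from the commutators that arise when bringing the partially ordered LHS into full normal order (for \eqref{eqn:claim 22} the contractions vanish since $xyc\gamma=0$, and the extra summand comes exactly from your splitting $s(\lambda)+k^2-2=(s(\mu)-2)+2ij$). Two small points: the absorption of $y$ into the $\fq_\lambda$ insertion after a contraction follows directly from \eqref{eqn:heis} and the diagonal structure---Proposition~\ref{formulas} is not needed here; and the coefficient $-s(\mu)/2$ does not come simply from ``each contraction scalar is proportional to an index in $\mu$'', but from the triangular sum $1+2+\cdots+(k-1)+\tfrac{k}{2}=\tfrac{k^2}{2}$ obtained by summing over all Virasoro splittings $(k,-l)$ with $0\leq l\leq k$ for each surviving part $k$ of $\mu$.
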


\begin{proof} As in the proof of \eqref{eqn:claim 1}, the terms on the first line of \eqref{eqn:claim 2} are in one-to-one correspondence with the terms on the second line. However, while the latter are normally ordered by definition, the former are not always normally ordered, due to the presence of the following terms:
\begin{gather}
\fq_k \fq_{-l}(\Delta_{12}y_1) \fq_\lambda(\Delta_{1...d}(x\gamma)_1) \label{eqn:norm 1} \\
\fq_\lambda(\Delta_{1...d}(x\gamma)_1) \fq_l \fq_{-k}(\Delta_{12}y_1) \label{eqn:norm 2} 
\end{gather} 
for all $k \geq l \geq 0$ (if $k=l$, the corresponding product appears in both \eqref{eqn:norm 1} and~\eqref{eqn:norm 2}, and we weigh it with weight $1/2$ in both of these formulas). Therefore, the difference between the first and second lines of \eqref{eqn:claim 2} is equal to the work necessary in normally ordering the expressions \eqref{eqn:norm 1} and \eqref{eqn:norm 2}, and we must identify the contribution of these with the expression on the third line of \eqref{eqn:claim 2}. For fixed $k$, this contribution is:
\begin{align*}
- \left(1+ 2 + ... + k-1 + \frac k2 \right) \fq_k \fq_{\lambda'} (\Delta_{1...d} (xy\gamma)_1) & \quad \text{ in the case \eqref{eqn:norm 1}} \\
- \left(1+ 2 + ... + k-1 + \frac k2 \right) \fq_{\lambda''} \fq_{-k} (\Delta_{1...d} (xy\gamma)_1) & \quad \text{ in the case \eqref{eqn:norm 2}}
\end{align*}
where $\lambda'$ (respectively $\lambda''$) denotes $\lambda$ without any one factor $\fq_l$ with $l$ positive (respectively negative). As we sum over all partitions $\lambda$ and over all ways to remove any one factor $\fq_l$ from them, we are left with:
$$
- \sum_{k \in \BZ} \sum_{|\lambda|=-k, l(\lambda) = d-1} \frac {k^2}{2 \lambda!} :\!\fq_k \fq_\lambda(\Delta_{1...d} (xy\gamma)_1)\!:
$$
which is precisely the third line of \eqref{eqn:claim 2}. Formula \eqref{eqn:claim 22} is proved analogously, only that we do not have to worry about the commutators that arose in the preceding paragraph, because $xyc\gamma = 0$ for any $(x,y) \in \{(\alpha,1), (1,\alpha)\}$. The expression on the last line of \eqref{eqn:claim 22} simply arises as the difference $k^2-i^2-j^2 = s(\lambda) + k^2 - s(\lambda \sqcup \{i,j\})$ in the notation thereof.
\end{proof}

For every $k \in \BN$ consider the cycles in $S^k$ defined by:
\begin{align*}
A_k(\gamma) & = \sum_{(x,y) \in \{(\alpha,1),(1,\alpha)\}} \sum_{1 \leq i < j \leq k} \Delta_{1...\hat{i} ... \hat{j} ... k} (x\gamma)_{\neq i,j}  \Delta_{ij} y_i  \\
B_k(\gamma) & = \sum_{(x,y) \in \{(\alpha,1),(1,\alpha)\}}  \sum_{i=1}^{k} \Delta_{1...\hat{i}...k} (x\gamma)_{\neq i} y_i .
\end{align*} 
Then using Claims \ref{claim:1} and \ref{claim:2} and \eqref{eqn:lqw new}, we may rewrite formula \eqref{eqn:aux 2} as:
\begin{multline}\label{eqn:aux 3}
[h_{\alpha \delta}, \fJ_0^d(\gamma)] = -d \fJ_0^1(\alpha) \fJ_0^{d-1}(\gamma) - d \fJ_0^1(1) \fJ_0^{d-1}(\gamma \alpha) 
+ d! \left[ \sum_{l(\mu) = d+2} \frac 1{\mu!} \fq_\mu \left( A_{d+2}(\gamma) \right) \right. \\
- \sum_{(x,y)} \sum_{l(\mu) = d} \frac {s(\mu)}{2\mu!} \fq_\mu \left(\Delta_{1...d} (xy\gamma)_1 \right) 
- \sum_{l(\mu) = d} \frac {s(\mu) -2}{\mu!}  \fq_\mu \left( A_{d}(c \gamma) \right) \\
- \sum_{(x,y)} \sum_{i,j \in \BZ} \sum_{|\lambda| = - i - j, l(\lambda) = d-2} \frac {ij}{\lambda!} :\!\fq_i \fq_j(\Delta_{12} y_1 ) \fq_\lambda(\Delta_{1...d-2} (xc\gamma)_1)\!: \\
- d\sum_{l(\mu) = d+2} \frac 1{\mu!} \fq_\mu \left( B_{d+2}(\gamma) \right) 
+ d\sum_{l(\mu) = d} \frac {s(\mu) -2}{\mu!} \fq_\mu \left( B_d(c \gamma) \right) \\
- \left. \sum_{(x,y)} \sum_{k \in \BZ} \sum_{|\lambda| = k, l(\lambda) = d-1} \frac {2(k^2-1)}{\lambda!} :\!\fq_\lambda (\Delta_{1...d-1} (xc\gamma)_1) \fq_{-k}(y)\!: \right]
\end{multline}
where above and hereafter, all the partitions denoted by $\mu$ will have $|\mu| = 0$
and $(x,y)$ runs over $\{(\alpha,1), (1,\alpha)\}$.

\begin{claim}
\label{claim:5}

The sum of the third and fifth lines of \eqref{eqn:aux 3} equals: 
\begin{equation}
\label{eqn:claim 5}
2 \sum_{l(\mu) = d} \frac 1{\mu!} \fq_\mu \left( \sum_{i=1}^{d} \Delta_{1...\hat{i}...d} (c\gamma)_{\neq i} \alpha_i  \right).
\end{equation}

\end{claim}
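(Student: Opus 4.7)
The plan is to reorganize both the third and fifth lines of \eqref{eqn:aux 3} as explicit sums indexed by partitions $\mu$ with $|\mu| = 0$ and $l(\mu) = d$, and to compare coefficients. A preliminary observation: since $\alpha \in A^1(S)$ and $c \in A^2(S)$ on the K3 surface, we have $\alpha \cdot c \in A^3(S) = 0$, so every insertion of the form $\Delta_{1\ldots d-1}(xc\gamma)_1$ or $\Delta_{1\ldots d-2}(xc\gamma)_1$ vanishes as soon as $x = \alpha$. This kills the $(x,y) = (\alpha, 1)$ contribution to both lines for $d \geq 3$ (the marginal case $d = 2$ can be checked directly), so I may assume $(x, y) = (1, \alpha)$ throughout.

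The fifth line is reorganized exactly as in the proof of Claim~\ref{claim:1}: decompositions $\mu = \lambda \cup \{-k\}$ are in bijection with pairs consisting of a partition $\mu$ and a distinguished position $i \in \{1, \ldots, d\}$, so that $k = -\mu_i$ and $1/\lambda! = m_{-k}(\mu)/\mu!$. The factor $k^2 - 1$ becomes $\mu_i^2 - 1$, yielding
\[
\text{fifth line} \;=\; -2 \sum_{\substack{|\mu| = 0 \\ l(\mu) = d}} \frac{1}{\mu!}\, \fq_\mu\!\left( \sum_{i=1}^d (\mu_i^2 - 1)\, \Delta_{1\ldots\hat{i}\ldots d}(c\gamma)_{\neq i}\, \alpha_i \right).
\]

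For the third line, the analogous procedure converts the $(i, j, \lambda)$-sum into a sum over $\mu$ together with an ordered pair of distinct positions $(p, q) \in \{1,\ldots,d\}^2$, with the coefficient $ij/\lambda!$ replaced by $\mu_p \mu_q / \mu!$. I then apply the Beauville--Voisin relation \eqref{eqn:bv 4} in the form $\Delta_{pq}\alpha_p = c_p \alpha_q + c_q \alpha_p$, and follow it with the identity
\[
c_p \cdot \Delta_{\{1,\ldots,d\}\setminus\{p,q\}}(c\gamma)_s \;=\; \Delta_{\{1,\ldots,d\}\setminus\{q\}}(c\gamma)_p
\]
(valid for any $s \in \{1,\ldots,d\}\setminus\{p,q\}$), which is obtained by writing $\Delta_{\{1,\ldots,d\}\setminus\{q\}} = \Delta_{\{1,\ldots,d\}\setminus\{p,q\}} \cdot \Delta_{\{p, s\}}$ and then using $c_p \Delta_{\{p, s\}} = c_p c_s$ from \eqref{eqn:bv 3}. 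This promotes the paired insertion into one supported at position $q$ only; the remaining $p$-sum is $\sum_{p \neq q} \mu_p = -\mu_q$ by $|\mu| = 0$, giving
\[
\text{third line} \;=\; 2 \sum_\mu \frac{1}{\mu!}\, \fq_\mu\!\left( \sum_{q=1}^d \mu_q^2\, \Delta_{1\ldots\hat{q}\ldots d}(c\gamma)_{\neq q}\, \alpha_q \right).
\]

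Adding the two expressions, the $\mu_i^2$ contributions cancel against $-(\mu_i^2 - 1)$, leaving precisely the target \eqref{eqn:claim 5}. The main obstacle is the Beauville--Voisin based identity $c_p \cdot \Delta_S(c\gamma)_s = \Delta_{S \cup \{p\}}(c\gamma)_p$ for $p \notin S$: this is what converts the two-position insertion coming from the double Nakajima factor $\fq_i\fq_j$ into the single-position form required by the target, and enables the weight $\mu_p\mu_q$ to collapse via $|\mu| = 0$ to $-\mu_q^2$.
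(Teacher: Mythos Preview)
Your proof is correct and follows essentially the same approach as the paper: both drop the $(x,y)=(\alpha,1)$ case via $\alpha c=0$, use \eqref{eqn:bv 3}--\eqref{eqn:bv 4} to simplify the paired insertion, and exploit $|\mu|=0$ to collapse the extra sum. The only cosmetic difference is that the paper first flattens $\Delta_{1\ldots k}(c\gamma)_1$ to $\gamma_1 c_1\cdots c_k$ and manipulates in the $(k,\lambda)$-language before reorganizing \`a la Claim~\ref{claim:1}, whereas you pass to the $\mu$-position form at the outset and absorb the stray $c_p$ factor via your diagonal identity.
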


\begin{proof} Because $\alpha c = 0$, only the $(x,y) = (1,\alpha)$ has a non-zero contribution to the third and fifth lines of \eqref{eqn:aux 3}, which means that their sum equals (using \eqref{eqn:bv 3} and~\eqref{eqn:bv 4}):
\begin{multline}\label{eqn:aux 4}
- \sum_{k \in \BZ} \sum_{|\lambda| = k, l(\lambda) = d-1} \frac {2(k^2-1)}{\lambda!} :\! \fq_\lambda (\gamma_1 c_1 ... c_{d-1}) \fq_{-k}(\alpha)\!: \\
- \sum_{i,j \in \BZ} \sum_{|\lambda| = - i - j, l(\lambda) = d-2} \frac {ij}{\lambda!} :\!\fq_i \fq_j(\alpha_1c_2+c_1\alpha_2) \fq_\lambda(\gamma_1c_1 ... c_{d-2})\!: \,.
\end{multline}
Because $\alpha c = 0$, all the $\fq_i$'s commute in the formula above, hence the second line of \eqref{eqn:aux 4} equals:
$$
-2\sum_{i\in \BZ} :\! i \fq_i(\alpha) \underbrace{\sum_{j \in \BZ} \sum_{|\lambda| = - i - j, l(\lambda) = d-2} \frac {j}{\lambda!} \fq_j(c) \fq_\lambda(\gamma_1c_1 ... c_{d-2})}\!:\,. 
$$
The underbraced sum is equal to:
$$
\sum_{|\lambda| = -i, l(\lambda) = d-1} \frac {|\lambda|}{\lambda!} \fq_\lambda (\gamma_1 c_1 ... c_{d-1}).
$$
Plugging this fact into \eqref{eqn:aux 4} leads to:
$$
2 \sum_{k \in \BZ} \sum_{|\lambda| = k, l(\lambda) = d-1} \frac {1}{\lambda!} :\! \fq_\lambda (\gamma_1 c_1 ... c_{d-1}) \fq_{-k}(\alpha)\!:
$$
which is equal to \eqref{eqn:claim 5} by a straightforward rearranging of terms (akin to the one we performed in Claim \ref{claim:1}).
\end{proof}

Using Claim \ref{claim:5}, and after reordering terms, we may rewrite \eqref{eqn:aux 3} as:
\begin{multline}\label{eqn:aux 5}
[h_{\alpha \delta}, \fJ_0^d(\gamma)] = -d \fJ_0^1(\alpha) \fJ_0^{d-1}(\gamma) -d \fJ_0^1(1) \fJ_0^{d-1}(\gamma \alpha) \\
 + d!\left[ \sum_{l(\mu) = d+2} \frac 1{\mu!}  \fq_\mu \left( A_{d+2}(\gamma) - d B_{d+2} (\gamma) \right) + 2 \sum_{l(\mu) = d} \frac 1{\mu!}  \fq_\mu \left(A_d(\gamma c) - (d-1) B_d(\gamma c) \right)\right. \\
\left. + \sum_{l(\mu) = d} \frac {s(\mu)}{\mu!} \fq_\mu \left( d B_d(\gamma c) - A_d(\gamma c)  - \Delta_{1...d} \left( \alpha_1 \int_\bullet \gamma_\bullet c_\bullet + c_1 \int_\bullet \gamma_\bullet \alpha_\bullet \right) \right)  \right]
\end{multline}
where in the last term, we used \eqref{eqn:new identity 2}.

\begin{lemma} \label{ABrelations} We have:
\begin{gather}
\label{eqn:aux 6} 
A_k(\gamma) - (k-2) B_k(\gamma) = \Delta_{1...k} \left(\alpha_1 \int_\bullet \gamma_\bullet + \int_\bullet \alpha_\bullet \gamma_\bullet \right) \\
\label{eqn:aux 7}
A_k(\gamma c) = (k-1) \Delta_{1...k} \alpha_1 \int_\bullet \gamma_\bullet c_\bullet,   \quad B_k(\gamma c) = \Delta_{1...k} \alpha_1 \int_\bullet \gamma_\bullet c_\bullet.
\end{gather}
\end{lemma}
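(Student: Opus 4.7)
The plan is to verify both identities by direct computation using the Beauville--Voisin relations \eqref{eqn:bv 3}--\eqref{eqn:bv 4}, the derived formulas of Proposition~\ref{formulas}, and the vanishing $\alpha c = 0$ in $A^3(S) = 0$.

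I would dispatch the second identity first, since $\alpha c = 0$ causes the $(x, y) = (\alpha, 1)$ summands in both $A_k(\gamma c)$ and $B_k(\gamma c)$ to vanish outright, each containing a factor $(\alpha \gamma c)_l = 0$. Only the $(1, \alpha)$ contribution survives. In $B_k(\gamma c) = \sum_i \Delta_{1\ldots\hat i\ldots k}(\gamma c)_{\neq i}\alpha_i$, the scalar $\int_\bullet \gamma_\bullet c_\bullet$ is extracted using $\gamma_l c_l = c_l \int_\bullet \gamma_\bullet c_\bullet$ from \eqref{eqn:new identity 1}, and the remaining class $\Delta_{1\ldots\hat i\ldots k} c_l$ simplifies to $\prod_{m \neq i} c_m$ by iterated application of \eqref{eqn:bv 3}; summing over $i$ and invoking the identity $\alpha_1 \Delta_{1\ldots k} = \sum_i \alpha_i \prod_{j \neq i} c_j$ (a special case of \eqref{eqn:new identity 3} with $\gamma = \alpha$, using that $\int_\bullet \alpha_\bullet = \int_\bullet \alpha_\bullet c_\bullet = 0$) yields the desired $\Delta_{1\ldots k}\alpha_1 \int_\bullet \gamma_\bullet c_\bullet$. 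For $A_k(\gamma c)$, one further substitutes $\Delta_{ij}\alpha_i = \alpha_i c_j + \alpha_j c_i$ from \eqref{eqn:bv 4}; the same collapse reduces each summand to a combination of terms $\alpha_m \prod_{n \neq m} c_n$, and a simple count shows that each such term occurs with multiplicity $k-1$ (coming from the pairs $(i, j)$ containing $m$), producing the coefficient $(k-1)$.

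For the first identity I would separate the $(x, y) = (\alpha, 1)$ and $(1, \alpha)$ contributions, exploiting the symmetric shape of the right-hand side $\Delta_{1\ldots k}(\alpha_1 \int_\bullet \gamma_\bullet + \int_\bullet \alpha_\bullet \gamma_\bullet)$ to reduce to showing
\[
\sum_{i<j} \Delta_{1\ldots \hat i \ldots \hat j \ldots k}\, \gamma_{\neq i,j}\, \Delta_{ij}\, \alpha_i\ -\ (k-2)\sum_i \Delta_{1\ldots \hat i \ldots k}\, \gamma_{\neq i}\, \alpha_i\ =\ \Delta_{1\ldots k} \int_\bullet \alpha_\bullet \gamma_\bullet,
\]
together with the analogous statement swapping $\alpha \leftrightarrow \gamma$. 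I would expand each partial diagonal via the iterated Beauville--Voisin formula \eqref{eqn:bv 6} and simplify using \eqref{eqn:new identity 1}--\eqref{eqn:new identity 3} and \eqref{eqn:bv 4}. The coefficient $k-2$ is precisely what forces the off-small-diagonal contributions to cancel between the $A_k$ and $(k-2)B_k$ pieces, leaving a single multiple of $\Delta_{1\ldots k}$ which collects to the desired scalar via \eqref{eqn:new identity 2}.

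The main obstacle is the combinatorial bookkeeping for the first identity: after applying \eqref{eqn:bv 6} inside the double and single sums one produces a substantial number of intermediate terms involving sub-diagonals and $c$-products, and verifying the precise cancellation leaving only $\Delta_{1\ldots k}$-supported contributions takes careful tracking. An attractive alternative is induction on $k$, with base case $k = 2$ (or $k = 3$) verified by direct application of \eqref{eqn:bv 4}, and inductive step obtained by multiplying both sides through by $\Delta_{k, k+1}$ and applying the transversality relation $\Delta_{1\ldots k} \cdot \Delta_{k, k+1} = \Delta_{1\ldots k+1}$ together with the $k=2$ case of \eqref{eqn:new identity 3}, in the same spirit as the induction in the proof of Proposition~\ref{formulas}.
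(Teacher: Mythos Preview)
Your treatment of \eqref{eqn:aux 7} is correct and matches the paper's: once the $(x,y)=(\alpha,1)$ summand is killed via $\alpha c=0$, both identities reduce to short manipulations with \eqref{eqn:bv 3} and \eqref{eqn:bv 4}.

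For \eqref{eqn:aux 6}, however, your proposed reduction has a genuine gap. The $(x,y)=(\alpha,1)$ contribution is \emph{not} obtained from the $(1,\alpha)$ contribution by swapping $\alpha\leftrightarrow\gamma$. In the $(\alpha,1)$ term one has $y=1$, so there is no factor at index $i$ at all, while $x\gamma=\alpha\gamma$ sits as a \emph{product} at a single index $m\notin\{i,j\}$ on the diagonal $\Delta_{1\ldots\hat i\ldots\hat j\ldots k}$. Swapping $\alpha\leftrightarrow\gamma$ in your displayed identity would instead place $\alpha$ and $\gamma$ at distinct indices $m$ and $i$. Since $\Delta_{1\ldots\hat i\ldots\hat j\ldots k}\cdot\Delta_{ij}$ is supported on a locus with two separate point-clusters (one for $\{i,j\}$, one for the complement), these are genuinely different cycles. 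Moreover $\gamma$ lives in $A^*(S\times S^l)$ of arbitrary degree in the first factor, so no a priori $\alpha\leftrightarrow\gamma$ symmetry is available, and the right-hand side $\Delta_{1\ldots k}\alpha_1\int_\bullet\gamma_\bullet$ is not the swap of $\Delta_{1\ldots k}\int_\bullet\alpha_\bullet\gamma_\bullet$ either. So the two pieces must be handled separately.

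The paper proceeds differently: rather than splitting by $(x,y)$, it first derives fully explicit closed formulas for $A_k(\gamma)$ and $B_k(\gamma)$ as a whole (Claim~\ref{eqn:claim 6}), by expanding the partial diagonals via \eqref{eqn:bv 6} and \eqref{eqn:new identity 3} and simplifying the factor $(\alpha\gamma)_j$ arising from the $(\alpha,1)$ summand via \eqref{eqn:new identity 2}. Once those explicit formulas \eqref{eqn:ak} and \eqref{eqn:bk} are in hand, the combination $A_k(\gamma)-(k-2)B_k(\gamma)$ visibly collapses, using \eqref{eqn:bv 6} once more, to the desired right-hand side. Your inductive alternative may also work, but the induction step would still require treating the $(\alpha,1)$ piece on its own terms rather than by the claimed symmetry.
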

\begin{proof}
The equations in the second line follow immediately from \eqref{eqn:bv 3}.
The first line follows from \eqref{eqn:bv 6} and the following claim.
\end{proof}
\begin{claim}
\label{eqn:claim 6}
For any $\alpha \in A^1(S)$ and any $\gamma \in A^*(S \times S^l)$, we have the following:
\begin{multline}\label{eqn:ak}
A_k(\gamma) = (k-2) \sum_{i \neq j} \alpha_i \gamma_j \prod_{s \neq i,j} c_s \\
- (k-1)(k-3) \left( \sum_{i} \alpha_i \prod_{j \neq i} c_j \right) \left( \int_\bullet \gamma_\bullet \right) + \left (\sum_{i < j} \Delta_{ij} \prod_{s \neq i,j} c_s  \right) \left( \int_\bullet \gamma_\bullet \alpha_\bullet \right) \\ 
+ (k-2) \left( \sum_{i \neq s < t \neq i} \alpha_i \Delta_{st} \prod_{j \neq i,s,t} c_j 
- (k-3) \alpha_i \sum_{j \neq i} \prod_{s \neq i,j} c_s \right)  \left( \int_\bullet \gamma_\bullet c_\bullet \right)
\end{multline}
and:
\begin{multline}\label{eqn:bk}
B_k(\gamma) = \sum_{i \neq j} \alpha_i \gamma_j \prod_{s \neq i,j} c_s \\
- (k-2) \left( \sum_{i} \alpha_i \prod_{j \neq i} c_j \right) \left( \int_\bullet \gamma_\bullet \right) + \left (\sum_{i} \prod_{j \neq i} c_j  \right) \left( \int_\bullet  \gamma_\bullet \alpha_\bullet  \right) \\
+ \left( \sum_{i \neq s < t \neq i} \alpha_i \Delta_{st} \prod_{j \neq i,s,t} c_j 
- (k-3) \alpha_i \sum_{j \neq i} \prod_{s \neq i,j} c_s \right)  \left( \int_\bullet \gamma_\bullet c_\bullet \right).
\end{multline}
\end{claim}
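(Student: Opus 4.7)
The plan is a direct expansion using the Beauville--Voisin relations. I would start with the simpler formula \eqref{eqn:bk} for $B_k(\gamma)$, then adapt the argument for \eqref{eqn:ak}. In each case, the first step is to split the outer sum according to the two choices $(x,y) \in \{(\alpha,1),(1,\alpha)\}$, and then to expand every partial diagonal using \eqref{eqn:bv 6} as a signed combination of terms $\Delta_{st}\prod_\ell c_\ell$ and $\prod_\ell c_\ell$. The remaining products are simplified term by term using the Beauville--Voisin identities.

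For $B_k(\gamma)$, one expands $\Delta_{1\ldots\hat{i}\ldots k}$ (applying \eqref{eqn:bv 6} to the $k-1$ indices $\{1,\ldots,k\}\setminus\{i\}$) and then multiplies by $(x\gamma)_{\neq i}\,y_i$. The essential simplifications are: (i) $\alpha\cdot c=0$ in $A^{\ast}(S)$ from \eqref{eqn:bv 2}, which annihilates every term in which the $\alpha$-insertion lands at a position already carrying a $c$-class, and (ii) the identities \eqref{eqn:bv 3} and \eqref{eqn:bv 4}, used to clear $\Delta_{st}\cdot c_s$ and $\Delta_{st}\cdot\alpha_s$. The surviving terms naturally organize themselves by combinatorial shape --- whether a small diagonal $\Delta_{st}$ survives, and how $\alpha$ and the $\gamma$-representative are distributed over the $k$ factors --- and collecting them yields \eqref{eqn:bk}.

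For $A_k(\gamma)$, the calculation runs in parallel but is substantially more involved. After expanding $\Delta_{1\ldots\hat{i}\ldots\hat{j}\ldots k}$ by \eqref{eqn:bv 6}, there is an additional diagonal $\Delta_{ij}$ and an extra factor $y_i$ that must be absorbed; one repeatedly uses \eqref{eqn:bv 3}--\eqref{eqn:bv 4} to move $\Delta_{ij}$ past $c$- and $\alpha$-classes at positions $i$ and $j$, and again relies on $\alpha c=0$ to kill the vanishing configurations. The auxiliary formulas \eqref{eqn:new identity 1}--\eqref{eqn:new identity 3} are useful to package those contributions that integrate against $\int_\bullet$.

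The main obstacle is purely combinatorial: the case analysis is driven by the relative positions of $i$, $j$, the $\alpha$-insertion, the $\gamma$-representative, and the $c$-positions, and each of the coefficients $(k-2)$, $(k-1)(k-3)$, $(k-3)$ on the right-hand sides of \eqref{eqn:ak}--\eqref{eqn:bk} has to be assembled by counting how many configurations contribute to each combinatorial type. No conceptual step is hidden here; the difficulty lies entirely in carrying out the expansion cleanly and not losing track of any term. A partial sanity check along the way is that $B_k(\gamma c)$ must reduce to the form required by Lemma \ref{ABrelations} (whose second line is independent of the present claim), since $\alpha c=0$ collapses most of the pieces immediately.
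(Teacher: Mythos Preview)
Your proposal is correct and follows essentially the same direct-expansion strategy as the paper. The only tactical difference is that the paper applies \eqref{eqn:new identity 3} (which already computes $\gamma_1\Delta_{1\ldots k}$) as the primary tool rather than first expanding the diagonal via \eqref{eqn:bv 6} and then multiplying by $(x\gamma)_{\neq i}$; this packages several of your case distinctions into a single step and shortens the bookkeeping, but the underlying computation is the same.
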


\begin{proof} Let us prove \eqref{eqn:bk} and leave the analogous formula \eqref{eqn:ak} as an exercise to the interested reader. Formulas \eqref{eqn:bv 6} and \eqref{eqn:new identity 3} imply:
\begin{multline*}
\Delta_{...\hat{i}...}(x\gamma)_{\neq i} y_i 
= \sum_{i \neq j} \gamma_j x_j y_i \prod_{s \neq i,j} c_s \\
+ \left(\sum_{i \neq s < t \neq i} \Delta_{st} y_i \prod_{j \neq i,s,t} c_j 
- (k-2) y_i \sum_{j \neq i} \prod_{s \neq i,j}c_s \right) \left( \int_\bullet \gamma_\bullet x_\bullet c_\bullet \right)\\
- (k-2) y_i \prod_{j\neq i} c_j \left(\int_\bullet \gamma_\bullet  x_\bullet\right).
\end{multline*}
If we sum over $i \in \{1,...,k\}$ and over $(x,y) \in \{(\alpha, 1), (1,\alpha)\}$, we obtain \eqref{eqn:bk} (note that in the $(x,y) = (\alpha, 1)$ case of the first term in the right-hand side, we need to use formula \eqref{eqn:new identity 2} to calculate $\gamma_j \alpha_j$). 
\end{proof}

With Lemma~\ref{ABrelations} in mind, \eqref{eqn:aux 5} reads:
\begin{multline*}
[h_{\alpha \delta}, \fJ_0^d(\gamma)] = -d \fJ_0^1(\alpha) \fJ_0^{d-1}(\gamma) -d \fJ_0^1(1) \fJ_0^{d-1}(\gamma \alpha) \\
+ d!\left[ \sum_{l(\mu) = d+2} \frac 1{\mu!}  \fq_\mu \left( \Delta_{1...d+2} \left(\alpha_1 \int_\bullet \gamma_\bullet + \int_\bullet \gamma_\bullet \alpha_\bullet \right) \right) \right. \\
\left. - \sum_{l(\mu) = d} \frac {s(\mu)}{\mu!} \fq_\mu \left(\Delta_{1...d} c_1 \int_\bullet \gamma_\bullet \alpha_\bullet  \right)\right].
\end{multline*}
By \eqref{eqn:lqw}, the second and third lines of the expression above equal:
$$
d!\left[- \frac 1{(d+1)!} \fJ_0^{d+1} \left( \alpha \int_\bullet \gamma_\bullet + \int_\bullet \gamma_\bullet \alpha_\bullet  \right) + \frac 2{(d-1)!} \fJ_0^{d-1} \left( c \int_\bullet \gamma_\bullet \alpha_\bullet \right)\right].
$$
If we convert the $\fJ$'s to $\fG$'s in the formula above using \eqref{eqn:g to j}, we obtain \eqref{eqn:comm h 3 exp}. \qed

\subsection{}
\label{sub:skip} 

Before we prove Theorems \ref{main} and \ref{thm:ref}, let us compute how the operators \eqref{eqn:operators} act on the fundamental class.

\begin{lem} 
\label{lem:tiny}
	
If $1_n \in A^*(\Hilb_n)$ denotes the fundamental class, then $h(1_n) = -n$ and $h_{\alpha \beta}(1_n) = h_{\alpha \delta} (1_n) = 0$ for all $\alpha, \beta \in A^1(S) \subset A^1(X)$.
	
\end{lem}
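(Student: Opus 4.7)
Write the fundamental class in Fock form as $1_n = \frac{1}{n!}\fq_1(1)^n(v)$, where $v$ generates $A^*(\Hilb_0) \cong \BQ$. The plan is to compute $H(1_n)$ for $H \in \{h, h_{\alpha\beta}, h_{\alpha\delta}\}$ by iterated application of the Leibniz rule, which reduces the task to (a) showing $H(v) = 0$ and (b) evaluating the commutator $[H, \fq_1(1)]$. Part (a) is immediate in each case: every summand of $h$, $h_{\alpha\beta}$, or $h_{\alpha\delta}$ contains some $\fq_k$ with $k < 0$, and such operators annihilate $A^*(\Hilb_0)$.

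For $H = h$, formula \eqref{eqn:comm formula 1} with $k = 1$ and $\Phi = 1$ gives $[h, \fq_1(1)] = \fq_1(\overline{1})$, where $\overline{1} = c_1 \int_\bullet 1 - \int_\bullet c_\bullet = 0 - 1 = -1$ (using that $\int_\bullet 1 = 0$ for codimension reasons and $\int_\bullet c_\bullet = 1$). Iterating Leibniz then yields $h(1_n) = -n \cdot 1_n$. For $H = h_{\alpha\beta}$, formula \eqref{eqn:comm formula 2} reduces the commutator to $\fq_1(\overline{\overline{1}})$ with $\overline{\overline{1}} = \alpha \int_\bullet \beta_\bullet - \beta \int_\bullet \alpha_\bullet = 0$, since $\alpha, \beta \in A^1(S)$ both have vanishing integral over $S$; thus $h_{\alpha\beta}(1_n) = 0$.

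The case of $h_{\alpha\delta}$ is where the real work lies. Starting from the reformulation \eqref{eqn:delta} and using the standard Virasoro--Heisenberg relation $[\fL_k(x), \fq_l(\gamma)] = -l\,\fq_{k+l}(x \gamma)$ together with \eqref{eqn:heis op}, one obtains
\[ [h_{\alpha\delta}, \fq_1(1)] = -\sum_{k \neq 0}\frac{1}{k}\sum_{(x,y) \in \{(\alpha,1),(1,\alpha)\}} :\!\fq_{k+1}(x)\,\fq_{-k}(y)\!:, \]
where the Heisenberg contribution drops out because $(1,1) = (\alpha,1) = 0$. I will then show that each summand annihilates $\fq_1(1)^m(v)$ for all $m \geq 0$: after normal ordering, the rightmost factor carries a non-positive index (the case $k = -1$ produces $\fq_0 = 0$ outright), the commutators needed to move it through successive copies of $\fq_1(1)$ again vanish by the same pairing computation, and the resulting operator $\fq_{< 0}(\cdot)$ kills~$v$. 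Combined with Leibniz, this gives $h_{\alpha\delta}(1_n) = 0$.

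The main obstacle is tracking indices and sign conventions in the $h_{\alpha\delta}$ computation, but the underlying mechanism is uniform across all three cases: everything is driven by the vanishing of $\int_S 1$ and $\int_S \alpha$ for any $\alpha \in A^1(S)$.
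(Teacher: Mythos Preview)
Your proof is correct and follows essentially the same strategy as the paper: write $1_n = \frac{1}{n!}\fq_1(1)^n(v)$, use Heisenberg/Virasoro commutation relations to reduce to annihilation on $v$, and exploit the vanishing pairings $(1,1)=(\alpha,1)=0$. The only cosmetic difference is that for $h$ and $h_{\alpha\beta}$ you invoke the packaged formulas \eqref{eqn:comm formula 1}--\eqref{eqn:comm formula 2} rather than computing the commutator directly from \eqref{eqn:heis op} as the paper does; for $h_{\alpha\delta}$ the two arguments are virtually identical.
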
 

\begin{proof} It is well-known that:
$$
1_n = \frac 1{n!} \fq_1(1)^n (1_0).
$$
Because the only operator $\fq_k$ which fails to commute with $\fq_1$ is $\fq_{-1}$, formula \eqref{eqn:def h} implies that:
\begin{align*}
h(1_n) &= h \left(\frac 1{n!} \fq_1(1)^n (1_0) \right) = \left[h, \frac 1{n!} \fq_1(1)^n \right] (1_0) \\
&= \left[\fq_1(1)\fq_{-1}(c) - \fq_1(c) \fq_{-1}(1), \frac 1{n!} \fq_1(1)^n \right] (1_0) \\
&= \sum_{i=1}^n \frac 1{n!} \fq_1(1)^{i-1}  \cdot \fq_1(1) [\fq_{-1}(c) , \fq_{1}(1)] \cdot \fq_{-1}(1)^{n-i}\cdot (1_0) = - n 1_n
\end{align*}
where the fact that $[\fq_{-1}(c), \fq_1(1)] = -1$ is a consequence of \eqref{eqn:heis op}. The fact that $[\fq_{-1}(\alpha), \fq_1(1)] = 0$ for any $\alpha \in A^1(S)$ means that the analogous computation implies that $h_{\alpha \beta}(1_n) = 0$. Similarly, let us use formula \eqref{eqn:delta} to compute:
\begin{align}
h_{a\delta}(1_n) &= \frac 1{n!} \left[h_{a \delta}, \fq_1(1)^n\right](1_0) \\
&= \frac 2{n!} \sum_{k = 1}^\infty \frac 1k \left[\fL_k \fq_{-k} (\alpha_1+\alpha_2) - \fq_{k} \fL_{-k} (\alpha_1+\alpha_2), \fq_1(1)^n\right] (1_0).\label{eqn:temp 6}
\end{align}
As a consequence of \eqref{eqn:heis op}, $\fq_{-k}(\alpha)$ and $\fq_{-k}(1)$ commute with $\fq_1(1)$. Meanwhile, the well-known Virasoro algebra relation (\cite{Lehn}) reads $[\fL_k(\gamma), \fq_1(1)] = - \fq_{k+1}(\gamma)$ for all~$k \in \BZ$ and $\gamma \in A^*(S)$. Therefore, all the commutators vanish in \eqref{eqn:temp 6} (or more precisely, they all have an annihilating operator $\fq_{-k}$ with $k \geq 0$ on the very right, and therefore act by 0 on $1_0$) hence $h_{a\delta}(1_n) = 0$.
\end{proof} 

\subsection{} By iterating relation \eqref{eqn:comm h 1 exp} $t$ times, we infer the following formula for all $d_1,...,d_t \geq 2$ and all $\Gamma \in A^*(S^t)$:
$$
[h,\fG_{d_1} ... \fG_{d_t}(\Gamma)] = \fG_{d_1} ... \fG_{d_t} \left(\sum_{i=1}^t (d_i-1) \Gamma + \int_\bullet \sum_{i=1}^t \Gamma_{1...i-1, \bullet, i+1...t} (c_i-c_\bullet) \right)
$$
(note that we are actually using formula \eqref{eqn:comm h 1} in order to conclude the aforementioned result). Since $\fG_{d_1}...\fG_{d_t}(\Gamma)$ is the operator of multiplication by $\univ_{d_1,...,d_t}(\Gamma)$, we conclude that:
\begin{equation}
\label{eqn:temp 5}
[h,\mult_{\univ_{d_1,...,d_t}(\Gamma)}] = \mult_{\univ_{d_1,...,d_t}(\Gamma')}
\end{equation}
where $\Gamma' = \sum_{i=1}^t (d_i - 1) \Gamma + \int_\bullet \sum_{i=1}^t \Gamma_{1...i-1, \bullet, i+1...t} (c_i-c_\bullet)$.
We are now ready to prove Theorem~\ref{main}, and follow the strategy outlined in Section~\ref{subsec:strategy}.

\begin{proof}[Proof of Theorem \ref{main}] Let us first prove part (i). By applying \eqref{eqn:temp 5} to the fundamental class $1_n \in A^*(\Hilb_n)$, we obtain:
$$
h(\univ_{d_1,...,d_t}(\Gamma)) - \univ_{d_1,...,d_t}(\Gamma) \cdot h(1_n) = \univ_{d_1,...,d_t}(\Gamma').
$$	
As a consequence of Lemma \ref{lem:tiny}, we obtain:
\begin{equation}
\label{eqn:temp 7}
\univ_{d_1,...,d_t}(\Gamma') = h(\univ_{d_1,...,d_t}(\Gamma)) +n \cdot \univ_{d_1,...,d_t}(\Gamma) = \th(\univ_{d_1,...,d_t}(\Gamma))
\end{equation}
on $A^*(\Hilb_n)$. Therefore, relation \eqref{eqn:temp 5} reads:
\begin{equation}
\label{eqn:temp 8}
[\th, \mult_{\univ_{d_1,...,d_t}(\Gamma)}] = \mult_{\th(\univ_{d_1,...,d_t}(\Gamma))}.
\end{equation}
As a consequence of the surjectivity of the morphism \eqref{eqn:surj}, we conclude that:
\begin{equation} \label{eqn:temp 8.5}
[\th, \mult_x] = \mult_{\th(x)}
\end{equation}
as an equality of operators $A^*(\Hilb_n) \rightarrow A^*(\Hilb_n)$, indexed by any $x \in A^*(\Hilb_n)$. This is precisely equivalent to \eqref{eqn:derivation}. In the language of correspondences, relation~\eqref{eqn:temp 8} is viewed as an equality of correspondences in $A^*(\Hilb_n \times \Hilb_n \times S^t)$. By \eqref{eqn:last} and~\eqref{eqn:surj}, there exist correspondences:
\[Z \in A^* (\Hilb_n \times \bigsqcup_a S^t) \quad \text{ and }\quad W \in A^*(\bigsqcup_a S^t \times \Hilb_n)\]
(here the indexing set follows the notation at the end of Section \ref{sub:ooo}) such that $W \circ Z = \Delta_{\Hilb_n}$. By composing the third factor of~\eqref{eqn:temp 8} with $^tZ \in A^*(\sqcup_a S^t \times \Hilb_n)$, we obtain~\eqref{eqn:temp 8.5} as an equality of correspondences in~$A^*(\Hilb_n \times \Hilb_n \times \Hilb_n)$, which is what our result claims.

Let us now prove part (ii), which requires us to show that~$\th(x) = \deg(x) \cdot x$ if $x$ is a divisor class or a Chern class of the tangent bundle. According to Section \ref{sub:chern} it suffices to show this relation for:
\begin{equation}
\label{eqn:temp 9}
x = \univ_{d_1,...,d_t} (\Gamma) 
\end{equation}
where $t = 1$ and $\Gamma = \gamma \in \{1,l,c\}_{l \in A^1(S)}$, or $t = 2$ and $\Gamma = \Delta_*(\gamma) \in A^*(S^2)$ for $\gamma \in \{1,c\}$. The degree of such a class $x$ is:
$$
\deg x = d_1+...+d_t + \deg \Gamma -2t.
$$
As a consequence of \eqref{eqn:temp 7}, we have: 
$$
\th(x) = \univ_{d_1,...,d_t} \left( (d_1+...+d_t-t)\Gamma + \int_\bullet\sum_{i=1}^t \Gamma_{1...i-1, \bullet, i+1...t} (c_i-c_\bullet) \right)
$$
so the class $x$ lies in the appropriate direct summand if:
\begin{equation}
\label{eqn:want}
\int_\bullet \sum_{i=1}^t \Gamma_{1...i-1, \bullet, i+1...t} (c_i-c_\bullet) = (\deg \Gamma -t) \Gamma.
\end{equation}
If $t=1$ and $\Gamma = \gamma \in \{1,l,c\}_{l \in A^1(S)}$, this relation is trivial, while if $t = 2$ and $\Gamma = \Delta_*(\gamma)$ for $\gamma \in \{1,c\}$, it is an immediate consequence of Claim \ref{claim:3}. 
\end{proof}

\begin{proof} [Proof of Theorem \ref{thm:ref}] The proof follows that of Theorem \ref{main} very closely. 
For part~(i) we iterate relations \eqref{eqn:comm h 2} and \eqref{eqn:comm h 3} to obtain:
\begin{align*}
[h_{\alpha \beta},\mult_{\univ_{d_1,...,d_t}(\Gamma)}] &= \mult_{\univ_{d_1,...,d_t}(\Gamma')} \\
[h_{\alpha \delta},\mult_{\univ_{d_1,...,d_t}(\Gamma)}] &= \mult_{\sum_{d_1',...,d_t'} \univ_{d'_1,...,d'_t}(\Gamma'')}
\end{align*}
where in the right-hand sides, $\Gamma',\Gamma''$ are obtained from $\Gamma \in A^*(S^t)$ by pulling back to some $S^{t+t'}$, multiplying with certain cycles, and pushing forward to $S^t$ again. In either case, we may apply the relations above to the fundamental class (and invoke Lemma \ref{lem:tiny}) to conclude that:
\begin{align*}
h_{\alpha \beta} \left( \univ_{d_1,...,d_t}(\Gamma) \right) &= \univ_{d_1,...,d_t}(\Gamma') \\
h_{\alpha \delta} \left( \univ_{d_1,...,d_t}(\Gamma) \right) &= \sum_{d_1',...,d_t'} \univ_{d'_1,...,d'_t}(\Gamma'').
\end{align*} 
Therefore, we conclude that:
\begin{align*} 
[h_{\alpha \beta},\mult_{\univ_{d_1,...,d_t}(\Gamma)}] &= \mult_{h_{\alpha \beta}(\univ_{d_1,...,d_t}(\Gamma))} \\
[h_{\alpha \delta},\mult_{\univ_{d_1,...,d_t}(\Gamma)}] &= \mult_{h_{\alpha \delta}(\univ_{d_1,...,d_t}(\Gamma))}.
\end{align*} 
As explained at the end of the proof of Theorem \ref{main} (i), the formulas above imply~\eqref{eqn:derivation ref} as an equality of correspondences in $A^*(\Hilb_n \times \Hilb_n \times \Hilb_n)$.

For part (ii), one can check the identity
$h_{\alpha \beta}(c_k(\Tan_{\Hilb_n})) = 0$
in the same way as the analogous proof in Theorem \ref{main} (ii). Hence it remains to prove that:
\[ h_{\alpha \delta}(\ch_k(\Tan_{\Hilb_n})) = 0. \]
By Lemma~\ref{lem:tiny}, it suffices to show that the commutator
of $h_{\alpha \delta}$ with the operator of multiplication by $\ch_k(\Tan_{\Hilb_n})$ vanishes.
By Section~\ref{sub:chern} this operator is precisely:
\begin{multline} \label{mult_tan}
\mult_{\ch_k(\Tan_{\Hilb_n})} = 2 \fG_{k+2}(1) + 4 \fG_k(c) \\
+ \sum_{i+j=k+2} (-1)^{j+1} \fG_i \fG_j(\Delta) + 2 \sum_{i+j=k} (-1)^{j+1} \fG_i(c) \fG_j(c).
\end{multline}
Using \eqref{eqn:comm h 3 exp} one has for all $d$ the commutation relations:
\begin{align*}
[h_{\alpha \delta}, \fG_{d}(1) ] & = - \fG_2(\alpha) \fG_{d-1}(1) - \fG_2(1) \fG_{d-1}(\alpha) + 2 \fG_{d-1}(\alpha) \\
[h_{\alpha \delta}, \fG_{d}(c) ] & = - \fG_2(\alpha) \fG_{d-1}(c) - \fG_{d+1}(\alpha)
\end{align*}
and for all $i,j \geq 2$ the relations:
\begin{align*}
[h_{\alpha \delta}, \fG_{i} \fG_{j}(\Delta) ] & = 
{-\fG_{2}(\alpha)} (\fG_{i-1} \fG_{j} + \fG_{i} \fG_{j-1})(\Delta) \\
& \quad\quad -\fG_{2}(1) (\fG_{i-1} \fG_{j} + \fG_{i} \fG_{j-1})(\Delta \alpha_1) \\
& \quad\quad - (\fG_{i+1} \fG_{j} + \fG_{i} \fG_{j+1})(\alpha_1 + \alpha_2) \\
& \quad\quad + 2 \fG_{i-1}(\alpha) \fG_{j}(c) + 2 \fG_{i}(c) \fG_{j-1}(\alpha).
\end{align*}
Using the above identities, it is straightforward to show that $h_{\alpha \delta}$ commutes with the right-hand side of \eqref{mult_tan} (we leave the computation as an exercise to the interested reader). This implies the required equation, namely $[h_{\alpha \delta}, \mult_{\ch_k(\Tan_{\Hilb_n})}] = 0$.
\end{proof}

\end{document}